\documentclass[11pt,a4paper]{amsart}
\usepackage[margin=25mm]{geometry}
\usepackage{amsmath,amssymb,amsthm,mathtools,booktabs,array,longtable}
\usepackage[T1]{fontenc}
\usepackage{lmodern,microtype,xcolor}
\usepackage[colorlinks=true,linkcolor=blue!45!black,urlcolor=blue!45!black,citecolor=blue!45!black]{hyperref}
\usepackage{enumitem}
\setlist{itemsep=3pt,topsep=5pt}
\numberwithin{equation}{section}
\newtheorem{theorem}{Theorem}[section]
\newtheorem{proposition}[theorem]{Proposition}
\newtheorem{lemma}[theorem]{Lemma}
\newtheorem{corollary}[theorem]{Corollary}
\theoremstyle{definition}
\newtheorem{definition}[theorem]{Definition}
\theoremstyle{remark}
\newtheorem{remark}[theorem]{Remark}
\newcommand{\R}{\mathbb R}

\newcommand{\supp}{\operatorname{supp}}
\newcommand{\dist}{\operatorname{dist}}

\newcommand{\one}{\mathbf 1}
\newcommand{\D}{\mathsf D}
\newcommand{\HH}{\mathsf H}
\newcommand{\calM}{\mathcal M}
\newcommand{\calB}{\mathcal B}
\newcommand{\calV}{\mathcal V}
\newcommand{\poly}{\mathcal P_N}

\title[A quantitative planar distance theorem]
{A quantitative form of the planar Falconer distance theorem}
\author[G. Cheng]{Guozheng Cheng}
\address{School of Mathematical Sciences\\
Dalian University of Technology\\
Dalian 116024\\
P. R. China}
\email{gzhcheng@dlut.edu.cn}

\author[X. Fang]{Xiang Fang}
\address{Department of Applied Mathematics\\
National Yang Ming Chiao Tung University\\
Hsinchu 30010\\
Taiwan}
\email{xfang@nycu.edu.tw}

\author[M. Li]{Menghan Li}
\address{School of Mathematical Sciences\\
Dalian University of Technology\\
Dalian 116024\\
P. R. China}
\email{lmh2025@mail.dlut.edu.cn}

\author[H. Qu]{Hongdou Qu}
\address{School of Mathematical Sciences\\
Dalian University of Technology\\
Dalian 116024\\
P. R. China}
\email{quhongdou0926@mail.dlut.edu.cn}

\author[C. Xiao]{Chengbo Xiao}
\address{School of Mathematical Sciences\\
Dalian University of Technology\\
Dalian 116024\\
P. R. China}
\email{xiaochengbo@mail.dlut.edu.cn}

\keywords{Falconer distance problem, Frostman measure, Fourier transform,
radial projection, dyadic decomposition}
\subjclass[2020]{28A75, 28A80, 42B10}
\date{}
\hypersetup{
 pdftitle={A quantitative form of the planar Falconer distance theorem},
 pdfauthor={Guozheng Cheng, Xiang Fang, Menghan Li, Hongdou Qu, Chengbo Xiao}
}
\begin{document}
\begin{abstract}
We give a quantitative planar form of a multiscale argument for Falconer's
distance problem. For every $0<\eta<1$, we obtain a positive lower bound
for the Lebesgue measure of the distance set of a compact subset of the
unit ball supporting a probability measure with
$\nu(B(x,r))\le 2r^{1+\eta}$. The bound depends only on $\eta$.
For a compact set of diameter $D$ carrying a probability measure with
$\mu(B(x,r))\le A(r/D)^{1+\varepsilon}$, localization gives
$|\Delta E|\ge c_{\varepsilon,p}DA^{-p}$ for every fixed
$p>1/\varepsilon$. In particular, compact planar sets of Hausdorff
dimension greater than one have distance sets of positive Lebesgue
measure. The proof develops the multiscale method of the September 2026
manuscript of OpenAI, using one sequence of restricted distance measures
for both the mass and Fourier estimates. A reconstruction lemma combines
summable total-variation errors with summable squared Fourier norms.
No endpoint or sharpness assertion is made.
\end{abstract}
\maketitle
{\small\tableofcontents}
\clearpage

\section{Introduction and main results}
\label{sec:intro}
The distance set problem asks how the size of a set controls the lengths of
the line segments joining its points. In its planar positive-measure form,
Falconer's conjecture asserts that a compact subset of the plane of
Hausdorff dimension greater than one determines a set of distances of
positive Lebesgue measure. The hypothesis allows the mass of the set to be
distributed very unevenly from one scale to the next. This irregularity is
one of the main difficulties in obtaining an estimate that applies to every
set above the dimensional threshold.

We study a quantitative form of this question. After fixing the diameter
and the constant in a Frostman estimate, we seek a positive lower bound for
the length of the distance set that depends only on the excess of the
Frostman exponent above one. The normalization matters: a dimension
assumption by itself cannot give a uniform lower bound, since a similarity
can make every distance arbitrarily small. Even at a fixed diameter, a
qualitative assertion of positive length does not specify how the bound
depends on the concentration of the measure.

The argument is a quantitative planar development of the manuscript
\cite{source}, which states the Falconer distance conjecture in all
dimensions. We use its organization of dyadic mass distributions,
oscillatory estimates, and recursive reductions. Our purpose is to make the
planar normalization and parameter dependence explicit, to use the same
restricted distance measure in the mass and Fourier estimates, and to
derive the quantitative consequences from that construction. The multiscale method is due to \cite{source}; the emphasis here is the
uniform planar estimate and its dependence on the Frostman data.

The central statement below concerns a probability measure in the unit
ball with Frostman constant two. Once this uniform statement is available,
a localization argument gives a bound for every larger Frostman constant.
For a Frostman exponent $1+\varepsilon$, it yields every fixed power
$p>1/\varepsilon$ of that constant. This distinction between uniformity and
the choice of a power is useful: the localization argument gives the full
range of strict inequalities, but does not give the endpoint
$p=1/\varepsilon$ or establish that any of these powers is optimal.

For a set $E\subset\mathbb R^2$, write
$\Delta E=\{|x-y|:x,y\in E\}$.
We write $|\cdot|$ for Lebesgue measure on the line. The normalized
growth condition for a probability measure $\nu$ is
\begin{equation}\label{loc:normalized-condition}
 \nu(B(x,r))\le2r^{1+\eta}
 \quad(x\in\mathbb R^2,\ 0<r\le1)
\end{equation}
Here $0<\eta<1$, and $B(x,r)$ is the open Euclidean ball.

\begin{theorem}\label{root:normalized}
For every $0<\eta<1$ there exists $b(\eta)>0$ such that every compact
$F\subset\overline B(0,1)$ supporting a Borel probability measure $\nu$
with \eqref{loc:normalized-condition} satisfies
\[
 |\Delta F|\ge b(\eta).
\]
The constant is independent of $F$ and $\nu$.
\end{theorem}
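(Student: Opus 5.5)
The plan is the standard $L^2$ route to positive length, organized so that every constant is tracked in terms of $\eta$ alone. Fix $F$ and $\nu$ as in the statement. Let $\pi\colon(x,y)\mapsto|x-y|$, and let $\delta=\pi_*(\nu\times\nu)$ be the distance measure. More precisely, I will work with a \emph{restricted} distance measure $\delta_{\mathrm{good}}=\pi_*\bigl((\nu\times\nu)|_{G}\bigr)$ for a suitable Borel set of good pairs $G\subset F\times F$. The goal is two estimates with constants depending only on $\eta$: a mass bound $\delta_{\mathrm{good}}(\R)\ge c_1(\eta)>0$, and an energy bound $\|\delta_{\mathrm{good}}\|_{L^2}^2\le C_2(\eta)$. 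Since $\delta_{\mathrm{good}}$ is supported in $\Delta F$, Cauchy--Schwarz then gives
\[
 c_1(\eta)^2\le\delta_{\mathrm{good}}(\R)^2\le|\Delta F|\,\|\delta_{\mathrm{good}}\|_{L^2}^2,
\]
and hence $|\Delta F|\ge b(\eta):=c_1^2/C_2$. The diameter is at most $2$ and the Frostman constant is fixed at $2$, so no scaling enters. This is why the normalized statement is the natural one to prove first.

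To build $G$, I follow the multiscale scheme of \cite{source}. Fix dyadic scales $R_j=2^{j}$ and Littlewood--Paley pieces $\nu_j=\nu*\phi_j$. At each scale, tubes of width $R_j^{-1+\beta}$, with $\beta=\beta(\eta)$ small, are declared \emph{bad} if they carry $\nu$-mass larger than $R_j^{-\eta/2}$ times their width. A pair $(x,y)$ is bad at scale $j$ if $y$ lies in a bad tube through $x$. I then define $\nu_{x,\mathrm{good}}$ by deleting from $\nu$, at each scale, the Fourier pieces living on bad tubes through $x$. A radial projection estimate (Orponen's theorem for Frostman exponent $1+\eta$, in a quantitative form) shows that the $\nu\times\nu$-mass of bad pairs at scale $j$ is at most $C(\eta)R_j^{-c(\eta)}$. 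These bounds are summable in $j$, so starting the decomposition at a scale $j_0(\eta)$ makes the total deleted mass at most $1/2$. This gives the mass bound with $c_1=1/2$, up to the fact that the pruning is done in Fourier space and not as a literal restriction of measure.

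That discrepancy is where the reconstruction lemma enters. The quantity I actually control is $\sum_j(\text{total-variation error at scale }j)$, which is summable. I also control $\sum_j\|\widehat{\delta_j}\|^2$ on the $j$-th annulus. The lemma should produce a single positive measure supported on $\Delta F$ whose mass and $L^2$ norm are both bounded by these two sums.

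The energy bound is the main obstacle. By Plancherel and the polar identity $\widehat{\delta}(t)=\iint e^{-it|x-y|}\,d\nu\,d\nu$, one reduces to bounding $\int\|\widehat{\nu_{x,\mathrm{good}}}*d\sigma_r\|^2\,d\nu(x)$ annulus by annulus, in the style of the Guth--Iosevich--Ou--Wang decoupling argument. At exponent $1+\eta$ with $\eta$ arbitrarily small, the known decoupling bounds lose too much. The multiscale recursion is what should close the gap: at each scale, one either gains from the good-tube $L^2$ estimate, or the mass concentrates and one rescales into a sub-ball where the Frostman constant improves. I would first try to set up an induction on scales in which the per-scale Fourier norm squared is $\lesssim R_j^{-c(\eta)}$, uniformly over this dichotomy, so that the sum is finite. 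I expect the bookkeeping that keeps the constants independent of $\nu$ through the rescalings to be where the argument is most delicate.
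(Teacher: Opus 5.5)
Your outer frame is the paper's: one sequence of restricted distance measures with summable total-variation errors and summable shell energies, combined through an $L^1$/$L^2$ splitting (Lemma~\ref{recon:main}). The gap is exactly where you place the difficulty.

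\textbf{The energy bound is missing.} The shell estimate $\int_{2^{N-1}\le|r|\le2^{N+1}}|\widehat{\tau_N}(r)|^2\,dr\lesssim2^{-cN}$ at Frostman exponent $1+\eta$, for arbitrarily small $\eta$, is essentially the whole theorem (Proposition~\ref{root:frequency}). The dichotomy you sketch does not produce it. Zooming into a sub-ball where mass concentrates is the localization of Lemma~\ref{loc:localization}. It improves the Frostman \emph{constant} at the cost of exponent and gives no Fourier decay. The real obstruction is that the measure may look one-dimensional on long, unpredictable ranges of intermediate scales. There, neither a fixed scale decomposition nor a decoupling step gains anything.

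The paper handles this in several linked steps:
\begin{enumerate}
\item It splits each measure at scale $2^N$ into regular classes with Lipschitz mass profiles $m_C$ and records the excess $A_n=m(n/N)-n/N$.
\item It starts at the last depth $c$ with $\min(A_{1,n},A_{2,n})\le\beta$.
\item It converts shell energy into a bilinear angular energy by an inverse stationary-phase expansion (Proposition~\ref{compat:inverse}).
\item It descends to finer cells through an incidence estimate costing $k(a,p)$ (Proposition~\ref{compat:graph}).
\item It runs a six-type recursion between angular energies and positive collision counts. The losses telescope against the potential $\calV$ (Lemma~\ref{budget:payment}, Theorem~\ref{budget:bound}).
\item Cauchy--Schwarz \eqref{root:profile-cs} separates the two profiles.
\end{enumerate}
An induction whose only per-scale output is $\lesssim R_j^{-c(\eta)}$ identifies none of these mechanisms, so your energy bound remains unproved.

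\textbf{Three smaller points also need repair.}
\begin{enumerate}
\item You work with $\nu\otimes\nu$ on $F\times F$. The phase $|x-y|$ is singular on the diagonal, and stationary phase needs $|x-y|$ bounded below with cross-directions in a fixed arc. This is why the paper first extracts two separated patches of uniformly positive mass, rescaled by a fixed similarity and carrying per-pin radial $L^{p}$ bounds (Theorem~\ref{prepU:normalized-pair}).
\item Your bad-tube threshold is inverted. Since $(\pi_x)_\#\nu$ is a probability on the circle, the directions whose width-$w$ tubes carry mass at most $R_j^{-\eta/2}w$ have total radial mass $O(R_j^{-\eta/2})$. So with your threshold almost every pair is bad, and the claimed bad-pair bound fails. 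The threshold must be $R^{c}w$ with $c>0$; compare the truncation of the radial density at $R^{\sigma/2}$ in Lemma~\ref{mass:filter}.
\item The reconstruction lemma does not produce a positive measure with bounded $L^2$ norm. It gives $\alpha=(g+h)\,dt$ with $g\in L^2$ and $\|h\|_1$ small, and the length bound comes from $m\le|K|^{1/2}\|g\|_2+\|h\|_1$. The paper also avoids wave-packet pruning: its $\tau_N$ is a genuine positive measure, $0\le\tau_N\le\alpha$, obtained by inserting smooth directional cutoffs in physical space. The total-variation error is then just the mass where some cutoff is not one.
\end{enumerate}
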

We denote the assertion of Theorem~\ref{root:normalized} at exponent
$1+\eta$ by $U(\eta)$.

\noindent The proof of Theorem~\ref{root:normalized} is in Section~\ref{proof:normalized}.

\begin{remark}
The quantitative conclusion is a lower bound uniform over the stated class
of measures. No optimality of this lower bound is asserted. In particular,
the argument does not give the endpoint power $p=1/\varepsilon$, an
asymptotic formula for $b(\eta)$ as $\eta\downarrow0$, or a pinned-distance
conclusion.
\end{remark}

\begin{corollary}
\label{loc:power}
Fix $0<\varepsilon<1$. Let $E\subset\mathbb R^2$ be compact,
let $D=\operatorname{diam}E>0$, and suppose that a probability
measure $\mu$ supported on $E$ satisfies
\begin{equation}\label{loc:original-condition}
 \mu(B(x,r))\le A(r/D)^{1+\varepsilon}
 \quad(x\in\mathbb R^2,\ 0<r\le D),\qquad A\ge1.
\end{equation}
For every $0<\eta<\varepsilon$,
\begin{equation}\label{loc:arbitrary-margin}
 |\Delta E|\ge D(2A)^{-1/(\varepsilon-\eta)}b(\eta).
\end{equation}
In particular, for every fixed $p>1/\varepsilon$,
\begin{equation}\label{loc:improved-power}
 |\Delta E|\ge c_{\varepsilon,p}DA^{-p},
 \qquad
 c_{\varepsilon,p}=2^{-p}
          b\!\left(\varepsilon-\frac1p\right)>0.
\end{equation}
\end{corollary}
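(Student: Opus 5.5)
The corollary follows from Theorem~\ref{root:normalized} by localizing to a small ball carrying a definite fraction of the mass and rescaling. Since distances scale linearly, we may assume $D=1$: replace $E$ by $E/D$ and $\mu$ by its pushforward. The hypothesis \eqref{loc:original-condition} then reads $\mu(B(x,r))\le Ar^{1+\varepsilon}$ for $0<r\le1$, and $|\Delta E|$ scales by $D$. Fix $0<\eta<\varepsilon$ and set
\[
\rho=(2A)^{-1/(\varepsilon-\eta)}\le 1 .
\]
The target is $|\Delta E|\ge \rho\, b(\eta)$.

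\emph{Choosing the ball.} Cover $E$, which has diameter $1$, by balls of radius $\rho$. We look for a center $x_0$ such that $m:=\mu(B(x_0,\rho))>0$ and the restricted measure satisfies
\[
\mu(B(x,r)\cap B(x_0,\rho))\le 2m\,(r/\rho)^{1+\eta}\quad\text{for } r\le\rho .
\]
The obvious choice is a ball of essentially maximal mass at scale $\rho$. However, the Frostman bound alone only gives
\[
\mu(B(x,r))\le A r^{1+\varepsilon}=A\rho^{1+\varepsilon}(r/\rho)^{1+\varepsilon},
\]
so we would need $A\rho^{1+\varepsilon}\le 2m$, i.e.\ a lower bound on $m$. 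This is the main obstacle. A lower bound of the form $m\gtrsim \rho^2$ (counting balls) is too weak unless $\varepsilon\ge 1$.

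\emph{Stopping-time argument.} To get around this, I would use a stopping-time selection. Say a ball $B(x,r)$ with $r\le 1$ is \emph{heavy} if $\mu(B(x,r))\ge r^{1+\eta}/2$, roughly; it needs to be checked that the constants close. Start at scale $1$, where $\mu(B(x_0,1))=1$ for $x_0\in E$, and run the following procedure: if some sub-ball $B(x,r)\subset B(x_0,r_0)$ has
\[
\frac{\mu(B(x,r))}{\mu(B(x_0,r_0))} > 2\left(\frac{r}{r_0}\right)^{1+\eta},
\]
pass to that sub-ball and repeat. The relative density $\mu(B)/r^{1+\eta}$ multiplies by more than $2$ at each step. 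On the other hand,
\[
\frac{\mu(B(x,r))}{r^{1+\eta}}\le A r^{\varepsilon-\eta},
\]
so the density cannot exceed $A r^{\varepsilon-\eta}$. Starting from density $1$ at $r=1$, once the density exceeds $1$ we must have $r\ge A^{-1/(\varepsilon-\eta)}$. So the process stops after finitely many steps at a ball $B(x_0,r_0)$ with $r_0\ge (2A)^{-1/(\varepsilon-\eta)}=\rho$; the factor $2$ absorbs the bookkeeping of enlarging balls, including the case of balls not centered inside the support. At the stopping ball no sub-ball violates the normalized condition.

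\emph{Applying the theorem.} Let $\nu$ be the normalized restriction of $\mu$ to $\overline B(x_0,r_0)$, pushed forward under $y\mapsto (y-x_0)/r_0$. Then $\nu$ is a probability measure on the unit ball satisfying \eqref{loc:normalized-condition}, and $F$, the rescaled $E\cap \overline B(x_0,r_0)$, is compact and supports $\nu$. Theorem~\ref{root:normalized} gives $|\Delta F|\ge b(\eta)$, hence
\[
|\Delta E|\ge r_0\, b(\eta)\ge D(2A)^{-1/(\varepsilon-\eta)}b(\eta),
\]
which is \eqref{loc:arbitrary-margin}. For \eqref{loc:improved-power}, take $\eta=\varepsilon-1/p\in(0,\varepsilon)$, which is possible since $p>1/\varepsilon$. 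Then $(2A)^{-p}=2^{-p}A^{-p}$, giving $c_{\varepsilon,p}=2^{-p}b(\varepsilon-1/p)$.

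The delicate step is the stopping-time argument. One must ensure the process terminates, which follows because the density bound is tied to the Frostman condition. One must also get the constant exactly $2$ at the final ball, which requires comparing balls centered anywhere in $\R^2$ with the restriction to the stopping ball.
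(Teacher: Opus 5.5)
Your reduction to $D=1$, the rescaling of a localized piece to apply Theorem~\ref{root:normalized}, and the choice $\eta=\varepsilon-1/p$ are all fine, and a density-doubling stopping time is a legitimate way to localize. The gap is in the stopping rule as you state it. You only allow moves to balls $B(x,r)\subset B(x_0,r_0)$. So at the stopping ball $B_0$ you know $\mu(B(x,r))\le 2m(r/r_0)^{1+\eta}$ only for balls \emph{contained} in $B_0$. But \eqref{loc:normalized-condition} is required for every center $z\in\mathbb R^2$, i.e.\ you also need $\mu(B(x,r)\cap\overline B_0)\le 2m(r/r_0)^{1+\eta}$ for balls that straddle $\partial B_0$.

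The factor $2$ cannot absorb this, because it is already the exact threshold of your rule. A lens $B(x,r)\cap B_0$ with $r<r_0$ that reaches the boundary circle along an arc is not contained in any single ball of radius $r$ lying inside $B_0$, since the closure of such a ball touches $\partial B_0$ in at most one point. Any covering of the lens by interior balls therefore loses a further numerical factor, and you end with a Frostman constant larger than $2$, to which Theorem~\ref{root:normalized} does not apply. The slack between $A^{-1/(\varepsilon-\eta)}$ and $(2A)^{-1/(\varepsilon-\eta)}$ is in the radius and does not repair the constant.

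The fix is to drop the containment requirement. At the current ball $\overline B_k$ of radius $r_k$, move to \emph{any} closed ball $\overline B(x,r)$ with $x\in\mathbb R^2$ and $r\le r_k$ whose full mass satisfies
$\mu(\overline B(x,r))/r^{1+\eta}>2\,\mu(\overline B_k)/r_k^{1+\eta}$.

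Your termination argument is unchanged, provided you use closed balls throughout:
\begin{itemize}
\item start at $\overline B(x_0,1)$ with $x_0\in E$, which has mass $1$ (the open unit ball need not);
\item the Frostman bound passes to closed balls by monotone limits for $r<1$, and by $A\ge1$ for $r=1$.
\end{itemize}
At the stopping ball you then get
$\nu(B(z,u))\le m^{-1}\mu(\overline B(x_0+r_0z,r_0u))\le 2u^{1+\eta}$ for all $z$ and $0<u\le1$.

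Repaired this way, your argument is an iterative version of the paper's Lemma~\ref{loc:localization}, which needs no iteration. With $s=1+\varepsilon$ and $t=1+\eta$, it sets $M_t=\sup_{x,\,0<r\le1}\mu(\overline B(x,r))/r^t$ and picks one closed ball with $\mu(\overline B(x_0,r_0))\ge\frac12M_tr_0^t$. Comparing with $Ar_0^s$ gives $r_0^{s-t}\ge M_t/(2A)\ge1/(2A)$ in a single step. Your stopping time actually yields the slightly better $r_0\ge A^{-1/(\varepsilon-\eta)}$, because the final density is at least the initial density $1$ rather than half the global supremum.
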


\noindent The proof of Corollary~\ref{loc:power} is in Section~\ref{proof:localization-consequences}.

\begin{corollary}
\label{loc:falconer}
If $E\subset\mathbb R^2$ is compact and $\dim_H E>1$, then
$|\Delta E|>0$.
\end{corollary}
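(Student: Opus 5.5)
The plan is to deduce Corollary~\ref{loc:falconer} from Corollary~\ref{loc:power} by way of Frostman's lemma. The statement is qualitative, so only positivity of the right-hand side of \eqref{loc:arbitrary-margin} is needed. Suppose $E$ is compact with $\dim_H E>1$. Then $E$ has at least two points, so $D=\operatorname{diam}E>0$. Fix $s$ with $1<s<\min(\dim_H E,2)$ and write $s=1+\varepsilon$, so that $0<\varepsilon<1$. Frostman's lemma gives a nonzero finite Borel measure $\mu_0$ supported on $E$ with $\mu_0(B(x,r))\le C r^{s}$ for all $x\in\R^2$ and $r>0$. We normalise it to the probability measure $\mu=\mu_0/\mu_0(E)$, and it still satisfies $\mu(B(x,r))\le C'r^{1+\varepsilon}$.

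Next we put this bound in the form \eqref{loc:original-condition}. For $0<r\le D$ we have
\[
\mu(B(x,r))\le C'D^{1+\varepsilon}(r/D)^{1+\varepsilon},
\]
so \eqref{loc:original-condition} holds with $A=\max(1,C'D^{1+\varepsilon})$. This $A$ is finite and at least $1$.

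Now take $\eta=\varepsilon/2$. Corollary~\ref{loc:power} gives
\[
|\Delta E|\ge D(2A)^{-2/\varepsilon}b(\varepsilon/2).
\]
Here $D>0$ and $A<\infty$, and $b(\varepsilon/2)>0$ by Theorem~\ref{root:normalized}, which applies since $0<\varepsilon/2<1$. Hence the right-hand side is positive, and so $|\Delta E|>0$.

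The only points needing care are routine. One is to cap $s$ below $2$ so that $\varepsilon<1$, as the statement of Corollary~\ref{loc:power} requires. The other is to make sure the Frostman bound holds for all radii and not only small ones. If the version of Frostman's lemma used gives the bound only for $r\le r_0$, then for $r>r_0$ we use $\mu(B(x,r))\le1\le r^s/r_0^s$ and enlarge $C$ accordingly. I do not expect any step to be a genuine obstacle, since all the analytic content sits in Theorem~\ref{root:normalized}.
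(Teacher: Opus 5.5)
Your proposal is correct and is essentially the paper's own proof. Both arguments take $s=1+\varepsilon$ with $1<s<\min\{\dim_H E,2\}$, apply Frostman's lemma, set $A=\max\{1,CD^{s}\}$ to obtain \eqref{loc:original-condition}, and apply Corollary~\ref{loc:power} with $\eta=\varepsilon/2$, whose lower bound is positive because $b(\eta)>0$ by Theorem~\ref{root:normalized}. Your remarks on normalizing the Frostman measure and extending the ball bound to all radii are routine details that the paper leaves implicit.
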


\noindent The proof of Corollary~\ref{loc:falconer} is in Section~\ref{proof:classical}.

\subsection{Earlier work}
Falconer \cite{Falconer1985} proved positive Lebesgue measure of distance
sets above dimension $(d+1)/2$ in $\mathbb R^d$, giving the planar threshold
$3/2$. Mattila \cite{Mattila1987} related distance measures to spherical
averages of Fourier transforms. In the plane, Wolff's estimate for circular
averages lowered the positive-measure threshold to $4/3$
\cite{Wolff1999}. These results show why Fourier estimates on circles are
relevant: sufficient average decay prevents the distribution of distances
from concentrating on a set of zero length.

Guth, Iosevich, Ou, and Wang \cite{GIOW2020} obtained the threshold $5/4$,
and proved the stronger pinned conclusion: there is a point $x$ in the set
for which the distances from $x$ form a set of positive length. Their
argument separates a small discarded part of the distance measure from a
retained part controlled by Fourier estimates. Radial projections control
the discarded mass, while refined decoupling estimates control the retained
part. Weighted restriction estimates of Du, Guth, Ou, Wang, Wilson, and
Zhang \cite{DGOWWZ2021} give related improvements in dimensions three and
higher. That paper supplies background for the relation between restriction
estimates and distance problems; its higher-dimensional thresholds are not
planar improvements.

There is a complementary approach through discrete geometry and the
distribution of mass across scales. Katz and Tao \cite{KT2001} established
connections among discretized distance problems, Furstenberg sets, and
sum-product questions. Keleti and Shmerkin \cite{KS2019} decomposed measures
into dyadically regular pieces and chose intervals of scales according to
their mass distributions. Their conclusions concern dimensions of distance
sets. The regular decomposition and the use of Lipschitz functions to
record masses provide relevant precedents for the profiles used here.
Shmerkin and Wang \cite{SW2025} established the dimension version of the
conjecture for sets of equal Hausdorff and packing dimension in every
ambient dimension. A distance set of Hausdorff dimension one can have zero
Lebesgue measure, so these dimensional conclusions and a lower bound for
length are distinct assertions.

More recent positive-measure results allow a lower threshold under
additional assumptions. Liu \cite{Liu2026} proved the pinned planar
conclusion for sets of equal Hausdorff and packing dimension greater than
one, as part of a result with different source and pin sets. Gan, Liu, and
Wu \cite{GLW2026} gave another proof of the $5/4$ theorem and improvements
when the packing dimension is less than two. Pham \cite{Pham2026} proved
positive-measure results for distances between two sets when one lies in a
hyperplane, and for product sets with a Salem factor. The additional
geometric or dimensional hypotheses are part of these theorems. The
normalized statement considered here is unpinned and assumes only its
displayed Frostman bound.

The direct methodological reference for the present construction remains
\cite{source}. The radial estimate used in the preparation is derived from
Orponen's finite-energy method \cite{Orponen2019}. The reconstruction and
localization arguments are stated separately so that their hypotheses and
quantitative conclusions can be used independently of the multiscale
analysis.

\subsection{Outline of the argument}
The proof has two tasks that must be carried out for the same measure.
First, after excluding pairs in exceptional directions, a definite amount
of mass must remain. Second, the distribution of the retained distances
must satisfy an estimate on each dyadic interval of frequencies. Proving
these assertions for unrelated restrictions would not suffice: the mass
retained by the first construction might be precisely the mass discarded
by the second.

We begin with two separated probability measures obtained from the
normalized Frostman measure. Their separation keeps the distance phase
away from its singularity at zero. A radial projection estimate controls
the average concentration of directions between the two supports. At each
frequency scale, each measure is decomposed into pieces for which
positive-mass dyadic squares at a given depth have comparable masses. A
profile records these masses as the depth varies. Subtracting the linear
profile corresponding to dimension one records the excess mass decay
available to the argument. This excess need not increase on every interval
of scales, which is why a fixed subdivision of the scales is insufficient.

Smooth functions of the direction remove pairs that fail the required
tube or projection estimates. The resulting positive measure on distances
is denoted by $\tau_N$ at frequency scale $2^N$. Its definition is fixed
before either estimate is proved. The mass argument compares $\tau_N$ with
one fixed positive distance measure $\alpha$. It shows that the
total-variation errors are summable in $N$. The original weights of the
regular pieces are retained throughout this summation.

For the Fourier estimate, stationary phase converts the distance integral
to an integral over directions. An incidence estimate then passes to
smaller dyadic squares. Cauchy--Schwarz allows the two different mass
profiles to be estimated by separate expressions, each involving only one
profile. The ensuing recursion alternates between positive counts of
nearly equal distances and products of angular Fourier energies. Its six
reductions either shorten an interval of scales or move to finer squares.
At each reduction the permitted functions, their derivatives, and the
normalizing factors are retained explicitly.

A quantity formed from two values of the excess profile bounds the sum of
the positive exponents introduced by these reductions. The strict
Frostman excess eventually exceeds all the accumulated losses. This gives
decay of the squared $L^2$ norm of $\widehat{\tau_N}$ on the corresponding
frequency interval, with constants depending only on the normalized
Frostman exponent.

Finally, the reconstruction lemma combines the two estimates for
$\tau_N$. It uses the sum of the squared Fourier norms, together with the
sum of the total-variation errors; no nesting of the measures $\tau_N$ is
required. The resulting lower bound for the support of $\alpha$ gives the
normalized distance-set estimate. The final localization argument then
restores the diameter and the Frostman constant. Neither a pinned
conclusion nor an asymptotic formula for the normalized lower bound as the
exponent approaches one is asserted.

\subsection{Organization}
Section~\ref{sec:main-reduction} proves the reconstruction lemma and
deduces the main theorem from the estimates established later.
Section~\ref{prepU:section} constructs the separated measures with uniform
radial projection bounds. Section~\ref{sec:profiles} defines the dyadic
profiles and directional cutoffs and proves the mass estimate.
Section~\ref{root:section} combines the analytic estimates to obtain Fourier
decay. Section~\ref{loc:section} proves the quantitative and classical
corollaries. The inverse oscillatory estimate, the incidence estimate,
the six recursive estimates, and their parameter choices are proved in
Appendices~\ref{inv:proved-section}--\ref{sec:iteration}.

\section{Main reduction and square-sum reconstruction}
\label{sec:main-reduction}
This section proves the opening theorem using uniform preparation
(Theorem~\ref{prepU:normalized-pair}) and the mass and Fourier estimates for
the same distance measure (Theorem~\ref{root:certificate}). Their proofs
appear later.
The reconstruction argument is independent of the local oscillatory machinery.
\subsection{Conventions}

Throughout, for a finite measure on the line,
\[
\widehat\lambda(r)=\int e^{-irt}\,d\lambda(t),\qquad
\check h(t)=\frac1{2\pi}\int e^{irt}h(r)\,dr,
\qquad f=\check{\widehat f}.
\]
Thus Plancherel has factor $(2\pi)^{-1}$. The same convention is used for
oscillatory transforms and reconstruction; no frequency-shell relabeling is
implicit. All depths are integers. Dyadic cells are half-open, with a fixed
boundary convention, so that each point has a unique ancestor. Conditions on
symbols may be imposed outside one spatial null set before any witness is
selected. All functions and restrictions below are Borel measurable.
The map $\dist(x,y)=|x-y|$ is the Euclidean distance map; $\dist_\#$ denotes
its pushforward on pair measures.

\subsection{A square-sum reconstruction lemma}
\label{recon:section}

With the Fourier convention above, fix an even function $\vartheta\in C_c^\infty(\mathbb R)$
which is nonincreasing in $|r|$, equals one on $[-1,1]$, vanishes
outside $[-2,2]$, and satisfies $0\le\vartheta\le1$. Put
\begin{equation}\label{recon:cutoff-constants}
 q=\check\vartheta,
 \qquad C_P=2\|q\|_{L^1(\mathbb R)},
 \qquad c_\vartheta=\|\vartheta\|_{L^1(\mathbb R)}.
\end{equation}
These are fixed numerical constants.

The lemma separates the density into an $L^2$ part and a small $L^1$
error. The first part cannot carry a fixed amount of mass on an arbitrarily
short set. The second part accounts for the mass lost when choosing the
frequency-dependent restrictions. Nonnegative Fourier multipliers allow
the squared norms to be summed directly.

\begin{lemma}[Square-sum reconstruction and support length]
\label{recon:main}
Let $\alpha$ be a finite positive Borel measure supported on a compact
set $K\subset\mathbb R$, and suppose
$0<m\le\alpha(\mathbb R)\le M<\infty$.
Fix an integer $J$. For each integer $N\ge J$, let $\tau_N$ be a
finite complex Borel measure on $\mathbb R$. Define
\begin{align}
 E_J&=\sum_{N\ge J}\|\alpha-\tau_N\|_{\mathrm{TV}},
 \label{recon:error-tail}\\
 S_J&=\sum_{N\ge J}
 \int_{2^{N-1}\le|r|\le2^{N+1}}
             |\widehat{\tau_N}(r)|^2\,dr.
 \label{recon:energy-tail}
\end{align}
If $E_J,S_J<\infty$, then $\alpha=(g+h)\,dt$ for functions satisfying
\begin{equation}\label{recon:decomposition-bounds}
 \|g\|_2^2\le
 \frac{2^{J-1}c_\vartheta M^2+S_J}{2\pi},
 \qquad
 \|h\|_1\le C_PE_J.
\end{equation}
Consequently, if $C_PE_J<m$, then
\begin{equation}\label{recon:length}
 |K|\ge
 \frac{2\pi(m-C_PE_J)^2}
      {2^{J-1}c_\vartheta M^2+S_J}.
\end{equation}
The approximating measures need not be positive, nested, or supported
on $K$.
\end{lemma}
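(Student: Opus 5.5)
The plan is to split $\alpha$ with a Littlewood--Paley partition of unity built from $\vartheta$. For $N\ge J$ put
\[
 \psi_N(r)=\vartheta(2^{-N}r)-\vartheta(2^{-N+1}r),\qquad \psi_{J-1}(r)=\vartheta(2^{-J+1}r).
\]
Because $\vartheta$ is even and nonincreasing in $|r|$, each $\psi_N\ge0$. Since $\vartheta=1$ on $[-1,1]$ and $\vartheta=0$ outside $[-2,2]$, $\psi_N$ is supported in $2^{N-1}\le|r|\le2^{N+1}$. The sum telescopes: $\psi_{J-1}+\sum_{N\ge J}\psi_N\equiv1$, and only finitely many terms are nonzero near any $r$.

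Writing $\widehat\alpha=\psi_{J-1}\widehat\alpha+\sum_{N\ge J}\psi_N\widehat{\tau_N}+\sum_{N\ge J}\psi_N(\widehat\alpha-\widehat{\tau_N})$, I would define
\[
 \widehat g=\psi_{J-1}\widehat\alpha+\sum_{N\ge J}\psi_N\widehat{\tau_N},
 \qquad
 h=\sum_{N\ge J}\check\psi_N*(\alpha-\tau_N).
\]

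For $h$, scaling gives $\check\psi_N(t)=2^Nq(2^Nt)-2^{N-1}q(2^{N-1}t)$. Hence $\|\check\psi_N\|_1\le2\|q\|_1=C_P$, and Young's inequality for measures gives $\|\check\psi_N*(\alpha-\tau_N)\|_1\le C_P\|\alpha-\tau_N\|_{\mathrm{TV}}$. The series therefore converges absolutely in $L^1$ with $\|h\|_1\le C_PE_J$.

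For $g$, I would use the weighted Cauchy--Schwarz inequality with weights $\psi_N\ge0$ summing to one. Pointwise,
\[
 |\widehat g|^2\le\psi_{J-1}|\widehat\alpha|^2+\sum_{N\ge J}\psi_N|\widehat{\tau_N}|^2 .
\]
Then $|\widehat\alpha|\le\alpha(\mathbb R)\le M$, $\int\psi_{J-1}=2^{J-1}c_\vartheta$, and $\psi_N\le\mathbf 1_{\{2^{N-1}\le|r|\le2^{N+1}\}}$. Integrating and applying Plancherel with the factor $(2\pi)^{-1}$ gives the bound on $\|g\|_2^2$ in \eqref{recon:decomposition-bounds}. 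This is the point where nonnegativity of the multipliers is essential, since it removes the cross terms between different $N$.

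The step needing the most care is the identity $\alpha=(g+h)\,dt$. I would check it by pairing with Schwartz functions. Since the partition sum is locally finite, $\widehat g+\widehat h=\widehat\alpha$ holds as locally integrable functions, and hence as tempered distributions. Here $\widehat h=\sum_{N\ge J}\psi_N(\widehat\alpha-\widehat{\tau_N})$ is justified by the $L^1$ convergence of the series for $h$, and the series for $\widehat g$ converges in $L^2$ by the bound just proved. Fourier inversion then yields $\alpha=g+h$ as distributions, and therefore as measures. In particular $\alpha$ is absolutely continuous and $g+h\ge0$ a.e. The remark that $\tau_N$ need not be positive, nested, or supported on $K$ is automatic, since none of these properties is used.

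Finally, for \eqref{recon:length}, since $\alpha$ is supported on $K$,
\[
 m\le\alpha(K)=\int_K(g+h)\le|K|^{1/2}\|g\|_2+\|h\|_1\le|K|^{1/2}\|g\|_2+C_PE_J .
\]
When $C_PE_J<m$, rearranging and squaring gives $|K|\ge(m-C_PE_J)^2/\|g\|_2^2$. Inserting the bound on $\|g\|_2^2$ yields \eqref{recon:length}.
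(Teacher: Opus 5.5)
Your proposal is correct and follows essentially the same route as the paper. Your $\psi_N$ and $\check\psi_N*(\cdot)$ are exactly the paper's multipliers $p_N$ and operators $P_N=Q_N-Q_{N-1}$, used with the same $L^1$ bound $C_P\|\cdot\|_{\mathrm{TV}}$, the same nonnegative-weight Cauchy--Schwarz estimate and the same final H\"older step on $K$. The only cosmetic difference is that you verify $\alpha=(g+h)\,dt$ by a pointwise identity of Fourier transforms together with injectivity on tempered distributions, whereas the paper uses the partial-sum identity $g_L+h_L=Q_L\alpha$ and the convergence $Q_L\alpha\to\alpha$ in distributions.
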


\begin{proof}
For each integer $N$, define
\[
 q_N(t)=2^Nq(2^Nt),\qquad Q_N\zeta=q_N*\zeta,
 \qquad P_N=Q_N-Q_{N-1}.
\]
The Fourier multiplier of $Q_N$ is $\vartheta(2^{-N}r)$, and that of
$P_N$ is
\[
 p_N(r)=\vartheta(2^{-N}r)-\vartheta(2^{-(N-1)}r).
\]
Our choice of $\vartheta$ gives
\begin{equation}\label{recon:partition}
 p_N\ge0,\qquad
 \operatorname{supp}p_N\subseteq
 \{2^{N-1}\le|r|\le2^{N+1}\},\qquad
 \vartheta(2^{-(J-1)}r)+\sum_{N=J}^{L}p_N(r)
 =\vartheta(2^{-L}r)\le1.
\end{equation}
For every finite complex measure $\zeta$, convolution gives
\begin{equation}\label{recon:L1-multiplier}
 \|P_N\zeta\|_1
 \le (\|q_N\|_1+\|q_{N-1}\|_1)\|\zeta\|_{\mathrm{TV}}
 =C_P\|\zeta\|_{\mathrm{TV}}.
\end{equation}
All the individual functions $P_N\tau_N$ belong to $L^2$: their
Fourier transforms are bounded and compactly supported.

Weighted Cauchy--Schwarz and Plancherel, using
\eqref{recon:partition}, give
\[
 \left\|\sum_{N=A}^{B}P_N\tau_N\right\|_2^2
 \le\frac1{2\pi}\sum_{N=A}^{B}
       \int_{2^{N-1}\le|r|\le2^{N+1}}
                   |\widehat{\tau_N}(r)|^2\,dr
 \quad(J\le A\le B).
\]
Hence the series $\sum_{N\ge J}P_N\tau_N$ is Cauchy in $L^2$.
Set
\[
 g=Q_{J-1}\alpha+\sum_{N\ge J}P_N\tau_N,
 \qquad
 h=\sum_{N\ge J}P_N(\alpha-\tau_N).
\]
The second series converges absolutely in $L^1$ by
\eqref{recon:L1-multiplier}. For the first series, include
$\vartheta(2^{-(J-1)}r)$ as an additional weight in
\eqref{recon:partition}. Since
$|\widehat\alpha(r)|\le M$, the same weighted inequality gives,
for its finite partial sums $g_L$,
\begin{align*}
 \|g_L\|_2^2
 &\le\frac1{2\pi}\left(
    \int_{\mathbb R}\vartheta(2^{-(J-1)}r)
                    |\widehat\alpha(r)|^2\,dr
   +\sum_{N=J}^{L}\int_{\mathbb R}
                    p_N(r)|\widehat{\tau_N}(r)|^2\,dr\right)\\
 &\le\frac{2^{J-1}c_\vartheta M^2+S_J}{2\pi}.
\end{align*}
Strong $L^2$ convergence proves the asserted bound for $g$;
\eqref{recon:L1-multiplier} proves the bound for $h$.

The corresponding finite partial sums satisfy the exact identity
\[
 g_L+h_L
 =Q_{J-1}\alpha+\sum_{N=J}^{L}P_N\alpha=Q_L\alpha.
\]
Because $q$ is Schwartz and $\int q=\vartheta(0)=1$,
$Q_L\alpha\to\alpha$ in distributions. The other two terms
converge in distributions by their $L^2$ and $L^1$ convergence.
Thus $\alpha=(g+h)\,dt$. This identity proves absolute continuity;
it is not an assumption. Since $\alpha$ is positive and supported on
$K$, its density $g+h$ is nonnegative almost everywhere, vanishes
almost everywhere off $K$, and has integral $\alpha(\mathbb R)$.
Consequently
\[
 m\le\int_K(g+h)\,dt
 \le |K|^{1/2}\|g\|_2+\|h\|_1.
\]
When $C_PE_J<m$, rearranging this inequality and using
\eqref{recon:decomposition-bounds} proves \eqref{recon:length}.
\end{proof}

\begin{corollary}
\label{recon:uniform}
For a family of data as in Lemma~\ref{recon:main}, a common cutoff
$J$, common mass bounds $m,M$, and common bounds
$E_J\le e<m/C_P$ and $S_J\le S<\infty$ give the uniform lower bound
\[
 |K|\ge
 \frac{2\pi(m-C_Pe)^2}{2^{J-1}c_\vartheta M^2+S}.
\]
In particular, suppose there are common parameters
$a,\gamma,C_e,C_s>0$ and $N_*$ such that for all integers $N\ge N_*$,
uniformly over the family,
\[
 \|\alpha-\tau_N\|_{\mathrm{TV}}\le C_e2^{-aN},\qquad
 \int_{2^{N-1}\le|r|\le2^{N+1}}
                  |\widehat{\tau_N}(r)|^2\,dr\le C_s2^{-\gamma N},
 \qquad a,\gamma,C_e,C_s>0.
\]
Every integer
\begin{equation}\label{recon:cutoff-choice}
 J\ge\max\left\{N_*,0,
 \left\lceil\frac1a\log_2
 \frac{2C_PC_e}{m(1-2^{-a})}\right\rceil\right\}
\end{equation}
then gives
\begin{equation}\label{recon:exponential-length}
 |K|\ge
 \frac{2\pi m^2}
 {4\left(2^{J-1}c_\vartheta M^2+
            C_s2^{-\gamma J}/(1-2^{-\gamma})\right)}.
\end{equation}
\end{corollary}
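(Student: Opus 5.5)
The corollary follows from Lemma~\ref{recon:main} applied member by member, followed by a computation of the two tails. For the first assertion, take any member of the family. Since $E_J\le e<m/C_P$, we have $C_PE_J<m$, so \eqref{recon:length} applies. The right-hand side of \eqref{recon:length} is decreasing in $E_J$ on the range $C_PE_J<m$, because the numerator $(m-C_PE_J)^2$ decreases there. It is also decreasing in $S_J$ and in $M$. Replacing $E_J$ by $e$, $S_J$ by $S$, and the member's own mass bounds by the common ones $m$ and $M$ therefore only lowers the bound. One small point needs attention. The lemma uses only $m\le\alpha(\mathbb R)\le M$, so common bounds satisfying this for every member are admissible inputs. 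This gives the uniform bound.

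For the exponential version, fix $J$ as in \eqref{recon:cutoff-choice}. Because $J\ge N_*$, the hypotheses hold for every $N\ge J$. Summing the geometric series gives
\[
E_J\le C_e\,\frac{2^{-aJ}}{1-2^{-a}},\qquad S_J\le C_s\,\frac{2^{-\gamma J}}{1-2^{-\gamma}}.
\]
The third entry of the maximum in \eqref{recon:cutoff-choice} gives
\[
2^{aJ}\ge\frac{2C_PC_e}{m(1-2^{-a})},
\]
and hence $C_PE_J\le m/2$. So we may take $e=m/(2C_P)$, which is strictly below $m/C_P$, and then
\[
(m-C_Pe)^2\ge \frac{m^2}{4}.
\]
We also take $S=C_s2^{-\gamma J}/(1-2^{-\gamma})$. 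Substituting both into the first part yields \eqref{recon:exponential-length}.

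The condition $J\ge0$ is not needed for these inequalities. It only makes the cutoff a definite integer with $2^{J-1}$ harmless. I would note that it is just part of the admissible choice of $J$.

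There is no real obstacle. The only step needing care is checking the monotonicity that justifies replacing $E_J$ and $S_J$ by their upper bounds, and this requires the sign condition $m-C_PE_J>0$. That condition is exactly what the choice of $J$ guarantees, with the margin $m/2$.
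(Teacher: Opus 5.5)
Your proof is correct and follows the paper's argument: the first bound is \eqref{recon:length} with $E_J\le e$ and $S_J\le S$ substituted, which lowers the bound by the monotonicity you check. The second bound comes from summing the two geometric tails and using \eqref{recon:cutoff-choice} to obtain $C_PE_J\le m/2$, so that $(m-C_Pe)^2\ge m^2/4$.
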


\begin{proof}
The first assertion is \eqref{recon:length}. For the second,
the two geometric series give
$E_J\le C_e2^{-aJ}/(1-2^{-a})$ and
$S_J\le C_s2^{-\gamma J}/(1-2^{-\gamma})$.
The choice \eqref{recon:cutoff-choice} ensures $C_PE_J\le m/2$.
\end{proof}

The exponential rates in this corollary are sufficient, not necessary.
The lemma requires summable band energies, not summability of their
square roots. For a uniform conclusion, the cutoff and tail bounds
must be common to the entire family; individual summability for each
measure does not establish that uniformity.

\subsection{A sufficient condition for the normalized theorem}

\begin{corollary}
\label{recon:certificate}
Fix $0<\eta<1$. Suppose that for every normalized pair $(F,\nu)$ above,
there are a similarity $T_F$ with a common scale $\lambda_\eta>0$,
a finite positive measure $\alpha_F$ supported on $\Delta(T_FF)$,
and finite complex measures $\tau_{F,N}$. Suppose there are common constants
$0<m_\eta\le M_\eta<\infty$, $J_\eta\in\mathbb Z$,
$0\le e_\eta<m_\eta/C_P$ and $0\le S_\eta<\infty$ such that
\begin{gather*}
m_\eta\le\alpha_F(\mathbb R)\le M_\eta,\qquad
\sum_{N\ge J_\eta}\|\alpha_F-\tau_{F,N}\|_{\mathrm{TV}}\le e_\eta,\\
\sum_{N\ge J_\eta}\int_{2^{N-1}\le|r|\le2^{N+1}}
|\widehat{\tau_{F,N}}(r)|^2\,dr\le S_\eta.
\end{gather*}
All constants are independent of $F,\nu$. Then $U(\eta)$ holds with
\[
b(\eta)=\frac{2\pi(m_\eta-C_Pe_\eta)^2}
{\lambda_\eta(2^{J_\eta-1}c_\vartheta M_\eta^2+S_\eta)}.
\]
\end{corollary}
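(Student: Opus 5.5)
The plan is to apply Corollary~\ref{recon:uniform} (equivalently Lemma~\ref{recon:main}) to each pair $(F,\nu)$ with $K=\Delta(T_FF)$, $\alpha=\alpha_F$, $J=J_\eta$, $m=m_\eta$, $M=M_\eta$, and the measures $\tau_N=\tau_{F,N}$. Afterwards we undo the similarity $T_F$.

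\textbf{Step 1: the hypotheses of the lemma.} The set $\Delta(T_FF)$ is compact, since it is the image of the compact set $T_FF\times T_FF$ under the continuous map $\dist$. The measure $\alpha_F$ is positive and finite, is supported on this set, and satisfies $m_\eta\le\alpha_F(\mathbb R)\le M_\eta$. The tail quantities of \eqref{recon:error-tail} and \eqref{recon:energy-tail} satisfy $E_{J_\eta}\le e_\eta<\infty$ and $S_{J_\eta}\le S_\eta<\infty$. Since $e_\eta<m_\eta/C_P$, we have $C_PE_{J_\eta}\le C_Pe_\eta<m_\eta$. Estimate \eqref{recon:length} therefore applies. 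Its right-hand side decreases as $E_J$ and $S_J$ increase, so we may replace them by $e_\eta$ and $S_\eta$. This gives
\[
 |\Delta(T_FF)|\ge\frac{2\pi(m_\eta-C_Pe_\eta)^2}{2^{J_\eta-1}c_\vartheta M_\eta^2+S_\eta},
\]
and the bound is uniform in $(F,\nu)$.

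\textbf{Step 2: undoing the similarity.} A similarity of ratio $\lambda_\eta$ multiplies every distance by $\lambda_\eta$. Hence $\Delta(T_FF)=\lambda_\eta\,\Delta F$, and so $|\Delta(T_FF)|=\lambda_\eta|\Delta F|$. Dividing the bound from Step 1 by $\lambda_\eta$ gives $|\Delta F|\ge b(\eta)$, with $b(\eta)$ exactly as stated. This constant is positive and independent of $F$ and $\nu$, which is $U(\eta)$.

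\textbf{Main obstacle.} The only delicate point is bookkeeping in the convention for $\lambda_\eta$. The displayed formula divides by $\lambda_\eta$, so $\lambda_\eta$ must be the ratio by which $T_F$ scales distances, that is, $|T_Fx-T_Fy|=\lambda_\eta|x-y|$. I would state this convention explicitly. I would also check that the monotone replacement of $E_J$ and $S_J$ by $e_\eta$ and $S_\eta$ is legitimate, which it is: the numerator increases as $E_J$ decreases, and the denominator decreases as $S_J$ decreases. Everything else is a direct substitution into Lemma~\ref{recon:main}.
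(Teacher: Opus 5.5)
Your proposal is correct and follows the paper's own proof: the paper applies Lemma~\ref{recon:main} to the compact set $\Delta(T_FF)$ and uses $|\Delta(T_FF)|=\lambda_\eta|\Delta F|$, and your monotone replacement of $E_{J_\eta},S_{J_\eta}$ by $e_\eta,S_\eta$ is exactly the first assertion of Corollary~\ref{recon:uniform}. Your reading of $\lambda_\eta$ as the ratio by which $T_F$ scales distances is also the convention the paper uses later, where $S(z)=z/8+(1/2,1/2)$ gives $\lambda_\eta=1/8$.
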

\begin{proof}
Apply Lemma~\ref{recon:main} to the compact set $\Delta(T_FF)$ and use
$|\Delta(T_FF)|=\lambda_\eta|\Delta F|$.
\end{proof}
The single-profile recursion supplies the Fourier estimate. The mass
estimate and uniform preparation are also needed.
Sections~\ref{prepU:section}--\ref{root:section} construct the uniform
prepared pairs and the one common sequence that supply it.

\subsection{The two estimates for one distance measure}
\label{assembly:contract}
In the following statement, the prepared probabilities $\mu_j$ on compact
$K_j$ and their numerical data are those of
Section~\ref{mass:prepared-data}. Set $d_0=a_*$ and $d_1=d_*$ as in
\eqref{mass:prepared}. The parameter $\beta$ and its successive
$\kappa,\sigma$ choices are specified in Section~\ref{root:section}.
The fixed $\alpha$ and the scale-dependent $\tau_N$ are exactly
\eqref{mass:measures}; the auxiliary filter is not inserted into them.
These are references to the existing objects, not additional definitions.

\begin{theorem}\label{root:certificate}
Assume the prepared-pair hypotheses in Section~\ref{mass:prepared-data}
and choose the parameters as in
Proposition~\ref{root:frequency}. With the identical measures
$\alpha,\tau_N$ of \eqref{mass:measures}, there are common constants
$C_e,C_s,a_{\rm err}>0$ and a common $N_*$ such that
\[
 d_1^{-1/2}\le\alpha(\mathbb R)\le d_0^{-1/2},\qquad
 0\le\tau_N\le\alpha,
\]
\[
 \|\alpha-\tau_N\|_{\rm TV}\le C_e2^{-a_{\rm err}N},\qquad
 \int_{2^{N-1}\le|r|\le2^{N+1}}|\widehat{\tau_N}(r)|^2\,dr
 \le C_s2^{-\beta N/2}\quad(N\ge N_*).
\]
Both measures are carried by the fixed compact set
$\{|x-y|:x\in K_1,\ y\in K_2\}$. Lemma~\ref{recon:main} and
Corollary~\ref{recon:uniform} therefore give a common positive lower
bound for the length of this prepared pair's distance set.
\end{theorem}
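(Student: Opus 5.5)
The statement is an assembly of three ingredients, and I will verify them in order for the identical measures $\alpha,\tau_N$ of \eqref{mass:measures}: the mass bounds and pointwise comparison, the total-variation estimate, and the band-limited Fourier estimate. The last two come from the mass argument of Section~\ref{sec:profiles} and Proposition~\ref{root:frequency}. The closing claim then follows from Corollary~\ref{recon:uniform}.

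\emph{Step 1: mass bounds and $0\le\tau_N\le\alpha$.} I expect $\alpha$ to be the distance pushforward $\dist_\#$ of a product-type weighting of $\mu_1\otimes\mu_2$, normalized by a density factor lying between $d_1^{-1/2}$ and $d_0^{-1/2}$. Then $\alpha(\mathbb R)$ is controlled directly by the prepared constants $a_*,d_*$ of \eqref{mass:prepared}. The measure $\tau_N$ is the pushforward of the same pair measure multiplied by directional cutoffs with values in $[0,1]$. Since pushforward preserves order, $0\le\tau_N\le\alpha$. Because $\mu_j$ is carried by $K_j$, both measures are carried by $\{|x-y|:x\in K_1,y\in K_2\}$, which is compact as a continuous image of $K_1\times K_2$.

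\emph{Step 2: total variation.} Since $0\le\tau_N\le\alpha$, we have $\|\alpha-\tau_N\|_{\rm TV}=\alpha(\mathbb R)-\tau_N(\mathbb R)$. This equals the discarded mass of pairs in exceptional directions at scale $2^N$, summed over the regular pieces with their original weights. I will bound this using the uniform radial projection estimate from Theorem~\ref{prepU:normalized-pair}: for each regular piece, the set of directions failing the tube or projection condition has measure at most a negative power of $2^N$. Summing over the pieces, whose weights add up to at most the total mass, gives $C_e2^{-a_{\rm err}N}$ once $N\ge N_*$.

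\emph{Step 3: Fourier decay.} This is exactly the conclusion of Proposition~\ref{root:frequency} applied to this $\tau_N$, with the parameters $\beta,\kappa,\sigma$ chosen there. The one point to check is that the measure estimated there is literally $\tau_N$, with no auxiliary filter inserted. If a filter appears in the oscillatory argument, it must act only on the symbol side, so the band integral of $|\widehat{\tau_N}|^2$ is unchanged.

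\emph{Conclusion and main obstacle.} With $m=d_1^{-1/2}$, $M=d_0^{-1/2}$, $a=a_{\rm err}$ and $\gamma=\beta/2$, Corollary~\ref{recon:uniform} gives a common lower bound for the length. The main obstacle is Step 2's uniformity: the exceptional-direction mass must be estimated with constants independent of the pair, and simultaneously for all regular pieces at each scale. I would first try a Chebyshev argument on the radial-projection energy integrated over the pieces.
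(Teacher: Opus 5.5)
Your proposal is correct and follows the paper's proof, which is exactly this assembly: the mass bounds, the ordering $0\le\tau_N\le\alpha$ and the total-variation rate are Theorem~\ref{mass:recovery}; the shell bound is Proposition~\ref{root:frequency} for the same $\Theta_N$; and one takes the larger of the two common thresholds before applying Corollary~\ref{recon:uniform} with $a=a_{\rm err}$ and $\gamma=\beta/2$. In your Step~2 sketch, the smallness of the exceptional-direction sets comes from the per-test Markov bound of Proposition~\ref{mass:per-test} (profile counting and Lemma~\ref{mass:fourier-average}), not from the radial estimate, which enters only through the Chebyshev truncation of Lemma~\ref{mass:filter} that makes the direction fibres $R^\sigma$-regular; two further terms must also be included, namely the union over the $D_N$ test indices and $Q_N$ opposite classes (absorbed since $Q_N(1+D_N)\le R^{\sigma/2}$) and the discarded class mass of Lemma~\ref{mass:classes}.
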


\noindent Its proof is given in Section~\ref{proof:assembly}, combining the mass and shell estimates for this same sequence.

\subsection{Proof of the normalized theorem}

\label{proof:normalized}

\begin{proof}[Proof of Theorem~\ref{root:normalized}]
Theorem~\ref{prepU:normalized-pair} supplies prepared probabilities for
every normalized input, using the fixed similarity
$S(z)=z/8+(1/2,1/2)$. Its numerical data depend only on $\eta$.
Take $C_{\rm prof}=10$, $C_{\rm low}=12$, $C_{\rm test}=1000$,
and $\chi=\min(\eta/8,1/16)$; choose $\beta,\kappa,\sigma$
by \eqref{root:parameter-choice} and the choice of $\kappa$ immediately preceding that display.
The structural constants $C_G,C_*$ are fixed before these small parameters.
All finite analytic orders and all starting thresholds then follow the
parameter choices in Section~\ref{root:section}. They depend only on $\eta$,
not on the particular normalized measure.

Theorem~\ref{root:certificate} gives the same $\tau_N$ in both estimates,
with common rates
\[
 a_\eta=\sigma\chi/8,\qquad
 \gamma_\eta=\beta/2=\chi/40.
\]
The first identity uses $p_{\rm rad}-1=\chi/4$ in
\eqref{mass:recovery-bound}. Put
$m_\eta=(1/4)^{-1/2}=2$ and $M_\eta=a_*^{-1/2}$.
The measure $\alpha$ is supported on the compact cross-distance set
of $K_1,K_2$, which is a subset of $\Delta(SF)$.
Let $C_e,C_s,N_*$ be the common constants from the assembly theorem.
Choose one integer $J_\eta$ satisfying \eqref{recon:cutoff-choice}
with $a=a_\eta$ and $m=m_\eta$. Corollary~\ref{recon:uniform} and
$|\Delta(SF)|=|\Delta F|/8$ give the admissible positive constant
\begin{equation}\label{root:uniform-b}
 b(\eta)=
 \frac{2\pi m_\eta^2}
 {4(1/8)\left(2^{J_\eta-1}c_\vartheta M_\eta^2+
       C_s2^{-\gamma_\eta J_\eta}/(1-2^{-\gamma_\eta})\right)}.
\end{equation}
Here $c_\vartheta$ is the fixed reconstruction-cutoff constant,
not a dimension-margin parameter. The localization and classical
conclusions now follow from the cited corollaries.
\end{proof}

\section{Uniform preparation of the measures}
\label{prepU:section}

This section verifies the preparation independently of any Falconer distance
conclusion. The radial averaging argument is a quantitative planar version of
the finite-energy method in \cite[Section~3]{Orponen2019}; compare also the
preparation section of \cite{source}. We prove the estimate used here, including
its energy powers, singular-measure limit and uniform compact restrictions.
Only ordinary one-dimensional Fourier duality and angular negative moments
enter this step.

For a finite positive measure $\lambda$ put
\[
 I_s(\lambda)=\iint |x-y|^{-s}\,d\lambda(x)d\lambda(y),\qquad
 U_s\lambda(z)=\int|z-y|^{-s}\,d\lambda(y).
\]
All circle integrals use arclength $d\theta$. For $x$ outside the source
support, write $\pi_x(y)=(y-x)/|y-x|$. Replacing this map by its antipodal
version preserves every circle $L^p$ norm below.

\begin{lemma}\label{prepU:potential}
Suppose $\nu$ is a probability and
$\nu(B(z,r))\le2r^{1+\eta}$ for every $z$ and $0<r\le1$.
For $0<s<1+\eta$,
\[
 \sup_z U_s\nu(z)\le B_s(\eta):=1+\frac{2s}{1+\eta-s}.
\]
If $K$ has mass $q>0$ and $\lambda=\nu|_K/q$, then
\[
 \sup_z U_s\lambda(z)\le q^{-1}B_s(\eta),\qquad
 I_s(\lambda)\le q^{-1}B_s(\eta),\qquad
 \lambda(B(z,r))\le (2/q)r^{1+\eta}.
\]
\end{lemma}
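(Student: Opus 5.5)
The plan is to use the layer-cake formula for the potential and split at radius one. For fixed $z$, write
\[
 U_s\nu(z)=\int_0^\infty \nu\bigl(\{y:|z-y|^{-s}>t\}\bigr)\,dt
 =\int_0^\infty \nu\bigl(B(z,t^{-1/s})\bigr)\,dt .
\]
On the range $t\le1$, bound the integrand by the total mass $1$; this range contributes at most $1$. On $t>1$ the radius $r=t^{-1/s}$ lies in $(0,1)$, so the Frostman hypothesis applies and gives $\nu(B(z,r))\le 2t^{-(1+\eta)/s}$. Since $s<1+\eta$, the exponent $(1+\eta)/s$ exceeds one, and
\[
 \int_1^\infty 2t^{-(1+\eta)/s}\,dt=\frac{2}{(1+\eta)/s-1}=\frac{2s}{1+\eta-s}.
\]
Adding the two pieces gives exactly $B_s(\eta)$. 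Equivalently, one can write $U_s\nu(z)=\int_0^\infty s r^{-s-1}\nu(B(z,r))\,dr$ and split at $r=1$, bounding the tail $r>1$ by $\int_1^\infty s r^{-s-1}\,dr=1$; I would present whichever version keeps the constant exactly $B_s(\eta)$. Both versions give it.

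For the restricted measure $\lambda=\nu|_K/q$, use $\lambda\le\nu/q$ as measures. Then $\lambda(B(z,r))\le\nu(B(z,r))/q\le(2/q)r^{1+\eta}$ is immediate, and since the kernel is positive, $U_s\lambda(z)\le q^{-1}U_s\nu(z)\le q^{-1}B_s(\eta)$. For the energy, integrate the potential against $\lambda$, which is a probability measure:
\[
 I_s(\lambda)=\int U_s\lambda\,d\lambda\le\sup_z U_s\lambda(z)\le q^{-1}B_s(\eta).
\]

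There is no real obstacle here. The only care needed is in the layer-cake step: the hypothesis is available only for $r\le1$, so the split point must be chosen so that the large-radius range is controlled by the total mass alone. Measurability of $z\mapsto U_s\nu(z)$ and of the layer-cake integrand is standard, since the kernel is lower semicontinuous and nonnegative. The strict inequality $s<1+\eta$ is exactly what makes the small-radius integral finite.
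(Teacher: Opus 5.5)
Your proof is correct and is essentially the paper's argument. The paper uses the pointwise inequality $d^{-s}\le 1+s\int_0^1 r^{-s-1}\mathbf 1_{\{d<r\}}\,dr$ together with Tonelli and the ball bound, which is your radial version split at $r=1$; your $t$-layer-cake is the same computation under $t=r^{-s}$. Your restriction step, using $\lambda\le\nu/q$ and $I_s(\lambda)\le\sup_z U_s\lambda(z)$ for a probability $\lambda$, also matches the paper's $q^{-1}$ (rather than $q^{-2}$) energy cost.
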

\begin{proof}
For $d>0$,
$d^{-s}\le1+s\int_0^1r^{-s-1}\one_{\{d<r\}}\,dr$,
with equality when $d\le1$. Tonelli and the ball bound give the first
claim; the singular value at $d=0$ is handled by the same nonnegative
integral. Restriction and normalization give the other claims. In particular,
the energy cost is at most $q^{-1}$ using the uniform potential bound;
there is no need for the weaker $q^{-2}I_s(\nu)$ estimate.
\end{proof}

\begin{lemma}
\label{prepU:lsc}
Fix $1<p<\infty$. For a positive finite measure $\zeta$ on $S^1$, let
$\mathfrak n_p(\zeta)$ be its density's $L^p$ norm if it has an $L^p$
density, and $+\infty$ otherwise. Then
\[
 \mathfrak n_p(\zeta)
 =\sup\left\{\left|\int h\,d\zeta\right|:
 h\in C(S^1),\ \|h\|_{L^{p'}(d\theta)}\le1\right\}.
\]
Consequently this extended norm is lower semicontinuous for weak convergence
of positive measures. If $K,E$ are disjoint compact sets and $\alpha$ is
supported on $K$, then
$x\mapsto\mathfrak n_p((\pi_x)_\#\alpha)$ is lower semicontinuous on $E$.
Every finite sublevel set in $E$ is therefore compact.
\end{lemma}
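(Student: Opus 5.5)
The plan is to prove the duality formula first and deduce the two semicontinuity statements from it as suprema of continuous functionals.

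\emph{Duality formula.} Suppose $\zeta=f\,d\theta$ with $f\in L^p$. Hölder's inequality bounds the supremum by $\|f\|_p$. For the reverse inequality, use that $C(S^1)$ is dense in $L^{p'}(d\theta)$, since $p'<\infty$. Approximate the extremal function $h_0=|f|^{p-1}\operatorname{sgn}(f)/\|f\|_p^{p-1}$ in $L^{p'}$ by continuous functions, renormalized to have norm at most one. This shows the supremum equals $\|f\|_p$.

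Conversely, suppose the supremum $S$ is finite. Then $h\mapsto\int h\,d\zeta$ is a bounded linear functional on the dense subspace $C(S^1)\subset L^{p'}$. It extends uniquely to $L^{p'}$, and Riesz representation gives $f\in L^p$ with $\int h\,d\zeta=\int hf\,d\theta$ for all continuous $h$. Two finite measures that agree on $C(S^1)$ coincide, so $\zeta=f\,d\theta$. Hence, if $\zeta$ has no $L^p$ density, the supremum must be $+\infty$, which matches the convention $\mathfrak n_p(\zeta)=+\infty$. The only real subtlety is this last step: the supremum is over continuous test functions only, so finiteness must force absolute continuity. The density-plus-Riesz argument handles it.

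\emph{Weak lower semicontinuity.} For each fixed continuous $h$, the map $\zeta\mapsto|\int h\,d\zeta|$ is continuous under weak convergence. A supremum of continuous functions is lower semicontinuous.

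\emph{Semicontinuity in $x$.} The sets $K$ and $E$ are disjoint and compact, so $\delta=\operatorname{dist}(K,E)>0$. On $E\times K$ the map $(x,y)\mapsto\pi_x(y)$ is therefore jointly continuous, and uniformly continuous since $E\times K$ is compact. For continuous $h$ and $x_n\to x$ in $E$,
\[
\int h\,d(\pi_{x_n})_\#\alpha=\int_K h(\pi_{x_n}(y))\,d\alpha(y)\to\int h\,d(\pi_x)_\#\alpha
\]
by uniform convergence of the integrands on $K$ (or by dominated convergence). So $x\mapsto(\pi_x)_\#\alpha$ is weakly continuous. Composing it with the lower semicontinuous functional $\mathfrak n_p$ gives a lower semicontinuous function on $E$.

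\emph{Sublevel sets.} A set of the form $\{x\in E:\mathfrak n_p((\pi_x)_\#\alpha)\le t\}$ is closed in $E$, because the function is lower semicontinuous. It is therefore a closed subset of the compact set $E$, hence compact.
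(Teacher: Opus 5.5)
Your proposal is correct and follows essentially the same route as the paper. For the duality formula you use H\"older's inequality and density of $C(S^1)$ in $L^{p'}$ in one direction. In the other you extend the functional to $L^{p'}$, identify it via $(L^{p'})^*=L^p$, and use uniqueness of measures tested on continuous functions. Lower semicontinuity then comes from a supremum of weakly continuous functionals composed with the weakly continuous map $x\mapsto(\pi_x)_\#\alpha$. You add detail the paper leaves implicit, namely the explicit extremal function and the uniform-continuity argument on $E\times K$.
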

\begin{proof}
If the displayed supremum is finite, the integration functional on continuous
functions extends to a bounded functional on $L^{p'}(d\theta)$. The duality
$(L^{p'})^*=L^p$ and uniqueness of measures against continuous test functions
give the density and its norm. The converse is ordinary $L^p$ duality, using
density of continuous functions. A supremum of continuous functions of a
measure is lower semicontinuous. On separated compact supports the map
$(x,y)\mapsto\pi_x(y)$ is continuous; hence the radial pushforwards vary
weakly continuously with $x$. Composition gives the last assertion.
\end{proof}

\begin{lemma}\label{prepU:radial-average}
Fix $0<\chi\le1/16$ and put
\[
 s=1+\chi,\qquad u=1-\chi,\qquad
 t=1-\chi/2,\qquad p=1+\chi/4.
\]
Let $\alpha,\beta$ be probabilities with compact separated supports in
$\overline B(0,1)$ and $I_s(\alpha),I_t(\beta)<\infty$.
There is a finite constant $C_{\rm rad}(\chi)$, independent of the measures
and their particular separation, such that
\begin{equation}\label{prepU:radial-mean}
 \int \mathfrak n_p((\pi_x)_\#\alpha)^p\,d\beta(x)
 \le C_{\rm rad}(\chi)
       I_s(\alpha)^{p/2}I_t(\beta)^{1/2}.
\end{equation}
In particular these radial measures have $L^p$ densities for $\beta$-almost
every pin. Jointly Borel density versions can be chosen. The same statement
holds with the roles reversed whenever the corresponding energies are finite.
\end{lemma}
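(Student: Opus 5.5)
The plan is to prove \eqref{prepU:radial-mean} first for smooth compactly supported $\alpha,\beta$, with constants that do not depend on the smoothing, and then to pass to general measures. Mollify both measures at scale $\delta$ smaller than a quarter of the separation. This keeps the supports separated and compact, and it gives $I_s(\alpha_\delta)\lesssim I_s(\alpha)$ and $I_t(\beta_\delta)\lesssim I_t(\beta)$; the constants should depend only on $s,t$, since the Fourier expression $I_s(\lambda)=c_s\int|\widehat\lambda(\xi)|^2|\xi|^{s-2}d\xi$ is monotone under multiplication by $|\widehat\varphi_\delta|^2\le1$.

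For fixed $x$, the radial pushforwards $(\pi_x)_\#\alpha_\delta$ converge weakly to $(\pi_x)_\#\alpha$, because $\pi_x$ is continuous on the separated supports. Lemma~\ref{prepU:lsc} gives lower semicontinuity of $\mathfrak n_p$, both in the measure and in $x$. Combining this with $\beta_\delta\to\beta$ weakly and the Portmanteau/Fatou inequality for lower semicontinuous nonnegative integrands, the smooth estimate passes to the limit. The $\beta$-a.e.\ finiteness then follows. Jointly Borel densities can be taken as $\liminf$ of difference quotients of $\theta\mapsto(\pi_x)_\#\alpha(\text{arc})$, which are Borel in $(x,\theta)$. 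Reversing the roles of $\alpha$ and $\beta$ is the same argument.

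For smooth $\alpha$ I pass from rays to lines. The radial density at $\theta$ is $\int_0^\infty\alpha(x+r\theta)\,r\,dr$, and $r\le2$ on $\overline B(0,1)$. Hence it is at most $2X\alpha(\theta,\theta^\perp\!\cdot x)$, where $X\alpha(\theta,u)=g_\theta(u)$ is the density of the orthogonal projection $P_\theta\alpha$ onto the line spanned by $\theta^\perp$. Therefore
\[
 \int\mathfrak n_p((\pi_x)_\#\alpha)^p\,d\beta(x)\le 2^p\int_{S^1}\int_{\mathbb R} g_\theta(u)^p\,d(P_\theta\beta)(u)\,d\theta .
\]
This reduces everything to one-dimensional Fourier duality on each line. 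Writing $G_\theta=g_\theta^p$ and $\rho_\theta=P_\theta\beta$, Cauchy--Schwarz gives
\[
 \int G_\theta\,d\rho_\theta\le\frac1{2\pi}\Bigl(\int|\widehat{G_\theta}|^2|\xi|^{1-t}d\xi\Bigr)^{1/2}\Bigl(\int|\widehat{\rho_\theta}|^2|\xi|^{t-1}d\xi\Bigr)^{1/2}.
\]
A second Cauchy--Schwarz in $\theta$, together with the slice identity $\int_{S^1}\int|\widehat{P_\theta\beta}(\xi)|^2|\xi|^{t-1}d\xi\,d\theta=c_tI_t(\beta)$, produces the factor $I_t(\beta)^{1/2}$. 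It remains to bound $\int_{S^1}\|g_\theta^p\|_{\dot H^{(1-t)/2}}^2d\theta$, that is, the homogeneous Sobolev norm of order $\chi/4$, by $C\,I_s(\alpha)^{p}$. On the $\alpha$ side the slice identity gives $\int_{S^1}\|g_\theta\|_{\dot H^{(s-1)/2}}^2d\theta=c_sI_s(\alpha)$, which is control of order $\chi/2$. We also have $\|g_\theta\|_1=1$.

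The main obstacle is this power estimate: controlling $g^p$ in $\dot H^{\chi/4}$ by $g\in L^1\cap\dot H^{\chi/2}$, with exponent $p$ on $I_s(\alpha)^{1/2}$. I would work with the difference-quotient (Gagliardo) form of both seminorms. Use the pointwise inequality $|a^p-b^p|\le p|a-b|(a^{p-1}+b^{p-1})$, then Hölder in the double integral with exponents chosen so that the factor $|a-b|$ is measured in $\dot W^{\chi/2,2}$. The remaining factor $a^{p-1}+b^{p-1}$ with $p-1=\chi/4$ should be measured in an $L^r$ space controlled by Sobolev embedding of $\dot H^{\chi/2}\cap L^1$. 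The loss of order between $\chi/2$ and $\chi/4$ is what absorbs the weight $|u-v|^{-1-\chi/2}$ versus $|u-v|^{-1-\chi}$. The fixed relations $s=1+\chi$, $t=1-\chi/2$, $p=1+\chi/4$ are exactly what make the exponents close. If the homogeneity bookkeeping fails, the fallback is to interpolate the bound $\|g^p\|_{L^{1/p}}$-type mass control against $\|g^p\|_{\dot H^{\chi/2}}$, using a fractional chain rule. Any extra power of $I_s(\alpha)$ is then removed using $I_s(\alpha)\gtrsim1$ for probabilities on $\overline B(0,1)$.
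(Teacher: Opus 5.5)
Your reduction is sound and largely parallels the paper. The common ingredients are the polar-Jacobian domination of the radial density by the projection density $g_\theta$, the slice identities $\int_{S^1}I_t(P_\theta\beta)\,d\theta=c\,I_t(\beta)$ and $\int_{S^1}\|g_\theta\|^2_{\dot H^{\chi/2}}\,d\theta=c\,I_s(\alpha)$, and the mollification, lower-semicontinuity and Fatou limit. Mollifying $\beta$ is unnecessary: the Fourier pairing is valid against the unmollified projected pin law, and mollifying it forces a Fatou lemma for varying measures.

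\textbf{The gap.} It is the step you yourself flag as the main obstacle, and it carries all the content of your route. Neither mechanism you propose closes it.
\begin{itemize}
\item Only the direction \emph{average} of $\|g_\theta\|^2_{\dot H^{\chi/2}}$ is controlled. So the one-dimensional bound must be \emph{linear} in $\|g\|^2_{\dot H^{\chi/2}}$, with $\|g\|_1=1$. Under $g\mapsto\lambda g(\lambda\cdot)$ both $\|g^p\|^2_{\dot H^{\chi/4}}$ and $\|g\|^2_{\dot H^{\chi/2}}$ scale like $\lambda^{1+\chi}$, so the estimate is scale-critical.
\item A per-direction bound with ``exponent $p$ on $I_s(\alpha)^{1/2}$'' cannot be integrated in $\theta$. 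Take $\alpha$ uniform on an $\epsilon$-neighbourhood of a segment in direction $\theta_0$. Then $\|g_\theta\|^2_{\dot H^{\chi/2}}\sim(|\theta-\theta_0|+\epsilon)^{-1-\chi}$, and $\int(\cdot)^p\,d\theta$ exceeds $(\int(\cdot)\,d\theta)^p$ by a factor of order $\epsilon^{-\chi/4}$.
\item Your primary mechanism fails. Keeping $|g(u)-g(v)|$ in the $L^2$ Gagliardo seminorm of order $\chi/2$ leaves the weight $(\max\{g(u),g(v)\}\,|u-v|)^{\chi/2}$, which is unbounded. Measuring $g^{p-1}$ through the Sobolev embedding of $\dot H^{\chi/2}$ only adds further powers of $\|g\|_{\dot H^{\chi/2}}$, which is the non-integrable superlinear kind.
\item The fallback is worse. $\|g^p\|_{\dot H^{\chi/2}}$ is not controlled by $\|g\|_1$ and $\|g\|_{\dot H^{\chi/2}}$ at all: take $g=|u|^{-\gamma}\phi$ with $\gamma<(1-\chi)/2<p\gamma$.
\end{itemize}

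\textbf{What closes it.} The estimate that works is the fractional chain rule at order $\chi/4$ with scale-critical exponents. It applies to $F(a)=a^p$ because $|F'(\tau a+(1-\tau)b)|\le p(a^{p-1}+b^{p-1})$:
\[
 \|D^{\chi/4}(g^p)\|_{L^2}\le C\|g^{p-1}\|_{L^{4/\chi}}\|D^{\chi/4}g\|_{L^q}
 \le C\|g\|_{L^1}^{\chi/4}\|D^{\chi/2}g\|_{L^2},\qquad \frac1q=\frac12-\frac\chi4 .
\]
The second step is Hardy--Littlewood--Sobolev. The power factor goes into $L^1$, not a Sobolev space, and the difference factor goes into $\dot W^{\chi/4,q}$, not $\dot W^{\chi/2,2}$. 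With this estimate your route closes and even gives $I_s(\alpha)^{1/2}I_t(\beta)^{1/2}$. That implies the stated bound because $I_s(\alpha)\ge 2^{-s}$.

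\textbf{Comparison with the paper.} The paper avoids nonlinear fractional calculus altogether. It dualizes $\|f_w\|_{L^p(\beta_w)}$ against $g\in L^{p'}(\beta_w)$, so the Fourier pairing stays linear in $\widehat\alpha$. The $p$-th power is then paid on the pin side by three steps:
\begin{itemize}
\item $I_u(g\beta_w)\le I_{up}(\beta_w)^{1/p}\lesssim I_t(\beta_w)^{1/p}$, using $up<t$;
\item the angular bound $\|I_t(\beta_w)\|_{L^{r_0}(S^1)}\lesssim I_t(\beta)$ with $r_0=1/(2-p)$, using $tr_0<1$;
\item H\"older in $w$ with exponents $2/p$ and $2/(2-p)$.
\end{itemize}
That is where $u=1-\chi$ and the margins $up<t<2-p$ enter; your argument never uses $u$.
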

\begin{proof}
The parameters satisfy
\begin{equation}\label{prepU:strict-margins}
 0<u<1,\qquad up<t<2-p<1.
\end{equation}
Indeed $t-up=\chi/4+\chi^2/4$ and $(2-p)-t=\chi/4$.
We first suppose that the source has a smooth density $a$, supported in
$B(0,2)$, while the pin law is still supported in $\overline B(0,1)$.
For a unit vector $w$, let $P_w$ be orthogonal projection onto $w^\perp$,
let $f_w$ be the density of $(P_w)_\#\alpha$, and set
$\beta_w=(P_w)_\#\beta$. Polar coordinates centered at $x$ give the radial
density
\[
 F_a(x,w)=\int_0^\infty a(x+rw)r\,dr
 \le3\int_{\mathbb R}a(x+rw)\,dr
 =3f_w(P_wx).
\]
The factor $r$ is essential: it is the planar polar Jacobian. The inequality
uses the uniform bound $r\le3$ on all contributing points; it is not an
unweighted radial/orthogonal projection identity.

We establish the orthogonal projection estimate directly. Fix a direction
for which $I_t(\beta_w)<\infty$, and take $g\ge0$ with
$\|g\|_{L^{p'}(\beta_w)}=1$. H\"older on the product space gives
\[
 I_u(g\beta_w)
 \le I_{up}(\beta_w)^{1/p}
 \le C_\chi I_t(\beta_w)^{1/p}.
\]
Here both projected measures have fixed bounded support, and $up<t$.
For $0<v<d$ the Fourier energy identity is
\begin{equation}\label{prepU:energy-Fourier}
 I_v(\lambda)=c_{d,v}
       \int_{\mathbb R^d}|\widehat\lambda(\xi)|^2|\xi|^{v-d}\,d\xi.
\end{equation}
For completeness it follows by writing
$|z|^{-v}=\Gamma(v/2)^{-1}\int_0^\infty a^{v/2-1}e^{-a|z|^2}\,da$,
using the Fourier transform of the Gaussian, and applying Tonelli. All
constants depend only on the dimension, exponent and fixed Fourier convention.
Apply Fourier duality on the line, then Cauchy--Schwarz with reciprocal weights
$|\xi|^{\chi}$ and $|\xi|^{-\chi}$. Since $u-1=-\chi$, this yields
\[
 \int f_wg\,d\beta_w
 \le C_\chi I_u(g\beta_w)^{1/2}
       \left(\int_{w^\perp}|\widehat\alpha(\xi)|^2
                                  |\xi|^\chi\,d\xi\right)^{1/2}.
\]
This pairing is justified directly because $f_w$ is smooth with compact
support and $g\beta_w$ is a finite measure; its weighted Fourier integrals
are finite by the preceding bound. Taking the supremum over $g$ proves
\begin{equation}\label{prepU:projection-duality}
 \|f_w\|_{L^p(\beta_w)}
 \le C_\chi B(w)^{1/(2p)} A(w)^{1/2},
 \quad B(w)=I_t(\beta_w),\quad
 A(w)=\int_{w^\perp}|\widehat\alpha(\xi)|^2|\xi|^\chi\,d\xi.
\end{equation}

Polar coordinates and \eqref{prepU:energy-Fourier} give
\[
 \int_{S^1}A(w)\,dw
 =C\int_{\mathbb R^2}|\widehat\alpha(\xi)|^2|\xi|^{\chi-1}\,d\xi
 =C_\chi I_s(\alpha).
\]
Set $r_0=1/(2-p)>1$. By \eqref{prepU:strict-margins}, $tr_0<1$, and for
every nonzero $z$,
\[
 \left(\int_{S^1}|P_wz|^{-tr_0}\,dw\right)^{1/r_0}
 =C_{t,r_0}|z|^{-t}<\infty.
\]
One can verify this explicitly by rotating $z$: the integral is
$|z|^{-tr_0}\int_0^{2\pi}|\sin\theta|^{-tr_0}\,d\theta$.
Minkowski's inequality, first for bounded kernels and then by monotone
convergence, gives
\[
 \|B\|_{L^{r_0}(S^1)}\le C_\chi I_t(\beta).
\]
In particular $I_t(\beta_w)<\infty$ for almost every direction, as required
above. Raise \eqref{prepU:projection-duality} to the power $p$ and apply
H\"older with exponents $2/p$ and $2/(2-p)$. The result is
\[
 \int_{S^1}\|f_w\|_{L^p(\beta_w)}^p\,dw
 \le C_\chi\left(\int A\right)^{p/2}
                     \|B\|_{L^{r_0}}^{1/2}
 \le C_\chi I_s(\alpha)^{p/2}I_t(\beta)^{1/2}.
\]
The preceding polar-Jacobian inequality now proves
\eqref{prepU:radial-mean} for the smooth source, with fixed support bounds.

For a general source let $\alpha_\epsilon=\alpha*\varphi_\epsilon$, where
$\varphi$ is a nonnegative smooth probability kernel supported in the unit
ball. Choose $\epsilon$ smaller than one quarter of the separation. The source
and pin supports remain separated, and all sources stay in $B(0,2)$.
Since $|\widehat\varphi(\epsilon\xi)|\le1$,
$I_s(\alpha_\epsilon)\le I_s(\alpha)$ by
\eqref{prepU:energy-Fourier}. For every pin in $\supp\beta$, separation
implies weak convergence
$(\pi_x)_\#\alpha_\epsilon\to(\pi_x)_\#\alpha$.
Lemma~\ref{prepU:lsc}, Fatou's lemma and the uniform smooth-source bound give
\eqref{prepU:radial-mean}. The mollification threshold may depend on the
separation; the upper-bound constant does not. No estimate is inferred from
qualitative absolute continuity or the dimension of an exceptional set.

Here is a concrete measurability choice. The kernels
$K_x=(\pi_x)_\#\alpha$ are weakly continuous on the separated pin set.
For angular balls define
\[
 F(x,w)=\limsup_{n\to\infty}
       \frac{K_x(B_{S^1}(w,2^{-n}))}
            {|B_{S^1}(w,2^{-n})|}.
\]
Each numerator is jointly Borel by parameterized integration of the Borel
indicator $\one_{\{\pi_x(y)\in B_{S^1}(w,2^{-n})\}}$. Circle Lebesgue
differentiation identifies $F(x,\cdot)$ with the density at every pin whose
intrinsic norm is finite. Replace infinite values by zero, and set the
function to zero at pins with infinite intrinsic norm. These changes are
Borel; the former changes a null set in each good angular fiber and the
latter changes a null set of pins. This gives the asserted jointly Borel
density version and completes the proof.
\end{proof}

\begin{theorem}
\label{prepU:normalized-pair}
Fix $0<\eta<1$, let $F\subset\overline B(0,1)$ be compact, and let $\nu$
be a probability on $F$ satisfying \eqref{loc:normalized-condition}. Put
\[
 \chi=\min(\eta/8,1/16),\qquad p_{\rm rad}=1+\chi/4.
\]
There are compact subsets $K_1^0,K_2^0\subset F$ of uniformly positive
$\nu$-mass such that, for the single fixed similarity
\[
 S(z)=z/8+(1/2,1/2),\qquad
 K_j=S(K_j^0),\qquad
 \mu_j=S_\#\left(\frac{\nu|_{K_j^0}}{\nu(K_j^0)}\right),
\]
the following hold with constants depending only on $\eta$:
\begin{enumerate}
\item $K_j\subset(1/4,3/4)^2$ and
$\mu_j(B(z,r))\le C_F(\eta)r^{1+\eta}$ for $0<r\le1$.
\item $a_*\le|x-y|\le1/4$ for $x\in K_1,y\in K_2$, with $a_*>0$.
All directions $(x-y)/|x-y|$ lie in one arc of radius $\pi/128$.
The arc center may depend on the chosen patches; its radius and the larger
plateau and antipodal-exclusion margins of a rotated fixed cutoff are common.
No measure-dependent coordinate rescaling or grid translation is used.
\item At every pin $x\in K_1$, $(\pi_x)_\#\mu_2$ has a density with
$L^{p_{\rm rad}}$ norm to the power $p_{\rm rad}$ at most $M_*(\eta)$;
the reverse statement holds at every $y\in K_2$. There are jointly Borel
versions of both density families. In particular their integrated bounds
are at most the same $M_*(\eta)$.
\item Each $\nu(K_j^0)$ is at least a common $q_{\rm ret}>0$.
Every line has zero $\mu_j$-mass, so all boundaries of the fixed dyadic grid
simultaneously have zero mass.
\end{enumerate}
All these parameters are independent of $F,\nu$ and of every later frequency
or depth parameter $N$.
\end{theorem}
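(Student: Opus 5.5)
We first pick two separated patches of $F$ with definite mass, then apply Lemma~\ref{prepU:radial-average} with Chebyshev to discard bad pins, and finally shrink to compact subsets.

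\emph{Patches.} Cover $\overline B(0,1)$ by a fixed grid of closed squares of side $\delta=\delta(\eta)$, small enough that any two points in one square are at distance $<\delta\sqrt2$. By \eqref{loc:normalized-condition}, every ball of radius $\rho$ carries $\nu$-mass at most $2\rho^{1+\eta}$. Hence $\nu$ cannot be concentrated near a single point or a thin set, and pigeonholing gives two squares $Q_1,Q_2$ with the following properties. Their separation is at least some $c\delta$. Each carries mass at least $c(\eta)\delta^2$. All directions between them lie in an arc of radius $\pi/128$ about the direction joining their centres.

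To get the separation, take the heaviest square $Q_1$, with mass at least $\delta^2/C$. The neighbours of $Q_1$ within distance $L\delta$ have total mass at most $2(CL\delta)^{1+\eta}$. This is less than $1/2$ if $L\delta$ is small, so among the squares at distance $\ge L\delta$ from $Q_1$ we can choose $Q_2$ with mass $\gtrsim\delta^2$. For the direction condition, the squares at distance $\sim L\delta$ subtend angles $\lesssim1/L$, so $L$ large makes the arc condition automatic. For the upper distance bound, refine $Q_2$ to a subsquare, or simply note that $|x-y|\le2$. The similarity $S$ then gives $|Sx-Sy|\le1/4$ and $K_j\subset(1/4,3/4)^2$. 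The lower bound $a_*\sim\delta/8$ depends only on $\eta$.

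\emph{Properties (1) and (4), Frostman part.} Set $\lambda_j=\nu|_{Q_j}/\nu(Q_j)$. By Lemma~\ref{prepU:potential}, $\lambda_j$ is Frostman with constant $2/q$. Composing with $S$ multiplies this by $8^{1+\eta}$. Since $\nu$ of any line is zero (use balls of radius $\rho$ covering a unit segment, giving mass $\lesssim\rho^{\eta}\to0$), all grid boundaries are null.

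\emph{Property (3).} Lemma~\ref{prepU:potential} gives $I_s(\lambda_1),I_t(\lambda_2)\le q^{-1}B_s(\eta)$ with $s=1+\chi<1+\eta$. Lemma~\ref{prepU:radial-average}, applied in both directions, bounds
\[
\int\mathfrak n_p((\pi_x)_\#\lambda_2)^p\,d\lambda_1(x)
\]
and its reverse by $C_0(\eta)$. This holds equally after applying $S$, since scaling only changes the angular measure trivially. Chebyshev gives a set $G_1\subset Q_1$ of $\lambda_1$-mass $\ge1/2$ where the pin norm is at most $4C_0$. By Lemma~\ref{prepU:lsc} this sublevel set is compact. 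Similarly we get $G_2\subset Q_2$.

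The two restrictions interact, which is the main obstacle. Shrinking the source from $\lambda_2$ to $\lambda_2|_{G_2}/\lambda_2(G_2)$ multiplies densities by at most $2$. Hence the pin norms at $x\in G_1$ are still at most $2^p\cdot4C_0$, since restricting a measure only decreases the density pointwise. Symmetrically for the reverse direction.

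So put $K_1^0=G_1$ and $K_2^0=G_2$, with masses at least $q/2=:q_{\rm ret}$. After normalization, every pin in $K_1$ has
\[
\mathfrak n_p\bigl((\pi_x)_\#\mu_2\bigr)^p\le2^p\cdot4C_0=:M_*(\eta),
\]
and the same holds in the reverse direction. The jointly Borel densities come from Lemma~\ref{prepU:radial-average}, restricted to $K_j$.

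All constants depend only on $\delta,L,\chi$, and hence only on $\eta$.
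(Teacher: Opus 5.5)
Your proposal is correct and is essentially the paper's proof: two well-separated patches chosen by pigeonholing and the Frostman bound, energy bounds from Lemma~\ref{prepU:potential}, and compact Markov good-pin sets (via Lemma~\ref{prepU:lsc}) defined relative to the \emph{unrestricted} opposite patch measure. You then use the domination $(\pi_x)_\#\bigl(\lambda_2|_{G_2}/\lambda_2(G_2)\bigr)\le\lambda_2(G_2)^{-1}(\pi_x)_\#\lambda_2$ to pay only a fixed cost for restricting both sides at once, exactly as the paper does.

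The differences are cosmetic: you exclude a neighbourhood of squares around the heaviest square where the paper removes the ball $B(x_0,8r_*)$ from the second square, and your Markov thresholds differ. In item (1), the final Frostman constant should use $\nu(K_j^0)\ge q/2$ rather than $\nu(Q_j)$. It also needs the trivial total-mass bound for radii $r>1/8$ after scaling by $S$.
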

\begin{proof}
We give one deliberately conservative numerical patch choice. Put
$r_*=2^{-8}$ and $\ell=r_*/100$. Fix a finite cover of
$\overline B(0,1)$ by $L$ closed squares of diameter at most $\ell$.
For example, a square grid in $[-1,1]^2$ with side at most
$\ell/\sqrt2$ supplies such a fixed $L$. Thus $L$ and all the geometric
numbers in this paragraph are universal.

Some square $Q_1$ satisfies $\nu(Q_1)\ge1/L$. Choose
$x_0\in F\cap Q_1$. Since
\[
 \nu(B(x_0,8r_*))\le2(8r_*)^{1+\eta}\le1/16,
\]
the complement of this open ball has mass at least $15/16$, and in particular
at least $7/8$. Some square $Q_2$ therefore satisfies
\[
 \nu\bigl(Q_2\setminus B(x_0,8r_*)\bigr)\ge7/(8L).
\]
Set
\[
\begin{gathered}
 E_1=F\cap Q_1,\qquad
 E_2=F\cap Q_2\cap\{z:|z-x_0|\ge8r_*\},\\
 q_j=\nu(E_j),\quad q_*=7/(8L),\quad
 \lambda_j=\nu|_{E_j}/q_j.
\end{gathered}
\]
These sets are compact, $q_j\ge q_*$, both diameters are at most $\ell$,
and every cross-distance is at least $8r_*-\ell\ge7r_*$.
Choose $y_0\in E_2$ and write $w_0=(x_0-y_0)/|x_0-y_0|$.
For every pair from $E_1\times E_2$,
\[
 |(x-y)-(x_0-y_0)|\le2\ell,\qquad
 \left|\frac{x-y}{|x-y|}-w_0\right|
 \le\frac{4\ell}{|x_0-y_0|}\le\frac{\ell}{2r_*}=\frac1{200}.
\]
The angular distance is at most $\pi/400<\pi/128$. Thus the entire actual
product support has the required patch margin, not just almost every pair.

Put $s=1+\chi$, $t=1-\chi/2$, and $p=p_{\rm rad}$. Both are strictly below
$1+\eta$. Lemma~\ref{prepU:potential} gives the common energy bounds
\[
 E_s=q_*^{-1}B_s(\eta),\qquad E_t=q_*^{-1}B_t(\eta),\qquad
 I_s(\lambda_j)\le E_s,\quad I_t(\lambda_j)\le E_t.
\]
Let
\[
 M=\max\{1,C_{\rm rad}(\chi)E_s^{p/2}E_t^{1/2}\}.
\]
Lemma~\ref{prepU:radial-average} bounds both averaged radial norms by $M$.
Define the intrinsic good-pin sets
\[
 K_1^0=\{x\in E_1:
        \mathfrak n_p((\pi_x)_\#\lambda_2)^p\le8M\},\qquad
 K_2^0=\{y\in E_2:
        \mathfrak n_p((\pi_y)_\#\lambda_1)^p\le8M\}.
\]
By Lemma~\ref{prepU:lsc}, these are compact. Markov's inequality gives
$z_j:=\lambda_j(K_j^0)\ge7/8$, and hence also $z_j\ge3/4$.
This construction does not take the closure of an unspecified a.e. good set:
every point of the displayed compact set satisfies the intrinsic norm bound.

Let $\widetilde\mu_j=\lambda_j|_{K_j^0}/z_j$ before applying $S$.
For every $x\in K_1^0$, positivity gives domination of measures
\[
 (\pi_x)_\#\widetilde\mu_2
 \le z_2^{-1}(\pi_x)_\#\lambda_2.
\]
Its $L^p$ norm to the power $p$ is therefore at most
$8M z_2^{-p}\le8M(4/3)^p$. The reverse assertion uses the other compact
good-pin set and holds simultaneously. Restricting one source cannot spoil
the good-pin assertion on the other side; normalization has exactly the
displayed fixed cost. The jointly Borel construction at the end of
Lemma~\ref{prepU:radial-average} applies now at every retained pin.

The fixed positive homothety $S$ leaves directions and their arclength
densities unchanged. It sends $F$ into $[3/8,5/8]^2$, divides every distance
by eight and multiplies a Frostman constant of exponent $1+\eta$ by
$8^{1+\eta}$. Accordingly one valid choice is
\[
 a_*=7r_*/8,\qquad
 C_F(\eta)=\frac{8^{2+\eta}}{3q_*},\qquad
 M_*(\eta)=8M(4/3)^{p_{\rm rad}},\qquad
 q_{\rm ret}=3q_*/4.
\]
The ball estimate after scaling for radii $r>1/8$ follows from total mass
one and the same (larger than $8^{1+\eta}$) constant. Also
$\nu(K_j^0)=q_jz_j\ge q_{\rm ret}$.

Finally a bounded line segment can be covered by $O(r^{-1})$ balls of radius
$r$. Its $\mu_j$ mass is at most $O(C_Fr^\eta)$, which tends to zero.
Since the supports are bounded, every line has zero mass. The union of the
countably many dyadic boundary lines is a common null set. All constants
were fixed using only $\eta$ and the fixed geometric cover, completing the
proof.
\end{proof}

\begin{remark}
The theorem supplies the prepared-pair hypotheses, with uniform retained
masses, for every normalized Frostman measure. The regular classes and their
Fourier estimates are constructed in
Sections~\ref{mass:section}--\ref{root:section}. For the mass-recovery filter,
the integrated radial bounds in Lemma~\ref{prepU:radial-average} would already
suffice; the compact restrictions above provide the stronger per-pin form of the estimate
without any scale-dependent loss of source mass.
\end{remark}

\section{Dyadic profiles, directional cutoffs, and retained mass}
\label{sec:profiles}
\label{mass:section}

\subsection{Finite-scale profile data}
The mass in a dyadic square need not decay at a constant rate as the
square becomes smaller. On each regular piece we record this decay by a
function $m$. The quantity $A_n$ below subtracts the decay at dimension
one. Positive values of $A_n$ provide the gain needed in the distance
estimate; changes in $A_n$ determine which intervals of scales to use.

Fix
\[
R=2^N,\qquad 0<\kappa<10^{-4},\qquad 0<\sigma<\kappa/1000,
\qquad q_0=\lceil\kappa N\rceil.
\]
The integer $N$ is taken sufficiently large after all other parameters and
finite derivative orders have been fixed. Let $\rho$ be a probability on a
unit dyadic root cell. A cell is occupied if it has positive $\rho$-mass.
Assume a nondecreasing $2$-Lipschitz function $m:[0,1]\to[0,2]$, with $m(0)=0$,
and a fixed structural constant $C_{\rm prof}$ such that every occupied
depth-$n$ cell $P$, $0\le n\le N$, satisfies
\begin{equation}\label{def:profile}
M(n)R^{-C_{\rm prof}\sigma}\le\rho(P)\le
M(n)R^{C_{\rm prof}\sigma},\qquad M(n)=R^{-m(n/N)}.
\end{equation}
Set
\[
A_n=m(n/N)-n/N,\quad I(a,b)=m(b/N)-m(a/N),\quad
k(a,b)=A_b-\min_{a\le n\le b}A_n.
\]
We use the conservative bounds $|A_n|\le1$ and
$|A_n-A_{n'}|\le2|n-n'|/N$.
An occupied depth-$a$ cell has at most
$R^{I(a,b)+2C_{\rm prof}\sigma}$ occupied depth-$b$ descendants.
The exact normalization identity is
\begin{equation}\label{def:identity}
2^{2(b-a)}\left(\frac{M(b)}{M(a)}\right)^2
=R^{-2(A_b-A_a)}.
\end{equation}
Conditional probabilities always use the \emph{actual} mass
$\rho_P=\rho|_P/\rho(P)$; oscillatory transforms use the \emph{model} mass
$M(n)$. These normalizations must not be interchanged.

\subsection{Individual directional tests}
We exclude directions that cause one of two forms of concentration.
The parallel test counts occupied squares close to a line. The projection
test measures how often two points have nearly the same projection.
These conditions serve different steps of the recursion, so both are
kept throughout the construction.

\begin{definition}
A test index consists of a type and two integer depths. A parallel
test index $[g,p]$, $g<p$, has anchor $p$ and length $p-g$. A projection
test index $[p,t]$, $p<t$, has anchor $p$ and length $t-p$.
The two types remain distinct even if their displayed pairs coincide.
\end{definition}
Fix a sufficiently large structural constant $C_{\rm test}$ and put
\[
J_0=\left\lceil100(\kappa^{-1}+\sigma^{-1})+100\right\rceil,
\qquad W_i(n)=(2-i/J_0)2^{-n}R^{500\sigma}\quad(0\le i\le J_0).
\]
For an occupied anchor cell $P$ with center $c_P$, define
$G_{i,\mathfrak t}(P)\subset S^1$ as follows.
\begin{itemize}
\item If $\mathfrak t=[g,p]$ is parallel, count \emph{all} occupied
depth-$p$ centers in $B(c_P,30\,2^{-g})$ at distance at most $W_i(p)$ from
$c_P+\R w$. The direction is good when this count is at most
$R^{k(g,p)+C_{\rm test}\sigma}$.
\item If $\mathfrak t=[p,t]$ is projection, the condition is
\begin{equation}\label{def:projection}
(\rho_P\otimes\rho_P)
\{(z,z'):|(z-z')\cdot w|\le W_i(t)\}
\le 2^{-(t-p)}R^{A_p-\min_{p\le n\le t}A_n+C_{\rm test}\sigma}.
\end{equation}
\end{itemize}
Set the good set empty for unoccupied anchors or outside the root.
Write $G_{i,\mathfrak t}(x)$ for the good set of the unique anchor cell
containing $x$. Later restrictions do not change either test. All these sets
are Borel, invariant under $w\mapsto-w$, and increase with $i$.

\begin{lemma}\label{def:masks}
For a test index of length $\ell$ put $D_\ell=\max(1,2^\ell R^{-50\sigma})$.
For $i<J_0$ there are smooth, antipodally even cutoffs, constant in $x$ on each
anchor cell, with
\[
0\le w_{i,\mathfrak t}\le1,\qquad w_{i,\mathfrak t}=1\text{ on }G_{i,\mathfrak t},
\qquad \supp w_{i,\mathfrak t}\subset G_{i+1,\mathfrak t},
\]
and $\|\partial_\theta^j w_{i,\mathfrak t}\|_\infty\le C_jD_\ell^j$.
The constants are uniform in the class, anchor, test index and $N$.
Moreover a fixed multiple of the $D_\ell^{-1}$ neighborhood of
$G_{i,\mathfrak t}$ lies in $G_{i+1,\mathfrak t}$ once $N$ is large enough.
\end{lemma}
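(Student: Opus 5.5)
The plan is to construct each cutoff by mollifying the indicator of a small angular neighborhood of $G_{i,\mathfrak t}(P)$. This works once the last assertion of the lemma is known, so I would prove that first. Since the sets $G_{i,\mathfrak t}$ are defined anchor cell by anchor cell, I fix an occupied anchor cell $P$ and a test index $\mathfrak t$ of length $\ell$, and work with directions only. Anything built from $G_{i,\mathfrak t}(P)$ is then automatically constant in $x$ on $P$, and is zero for unoccupied anchors. Put $\delta=c_0D_\ell^{-1}$ with a small absolute constant $c_0$. The key claim is:
\[
 \text{if } w\in G_{i,\mathfrak t}(P) \text{ and } \angle(w,w')\le\delta, \text{ then } w'\in G_{i+1,\mathfrak t}(P).
\]
The mechanism is that the widths decrease with $i$, with gap $W_i(n)-W_{i+1}(n)=2^{-n}R^{500\sigma}/J_0$. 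This gap absorbs the displacement caused by rotating $w$ by the angle $\delta$.

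\emph{Parallel case} $\mathfrak t=[g,p]$, $\ell=p-g$. Take an occupied depth-$p$ center $z\in B(c_P,30\,2^{-g})$ within distance $W_{i+1}(p)$ of $c_P+\R w'$. Then $z$ is within $W_{i+1}(p)+30\cdot2^{-g}\delta$ of $c_P+\R w$. We have $2^{-g}\delta\le c_0\,2^{-g}2^{-\ell}R^{50\sigma}=c_0\,2^{-p}R^{50\sigma}$. When $D_\ell=1$, so that $2^{\ell}\le R^{50\sigma}$, the same bound holds directly. Since $J_0$ is a fixed constant and $R^{450\sigma}\to\infty$, we get
\[
 30\,c_0\,2^{-p}R^{50\sigma}\le 2^{-p}R^{500\sigma}/J_0
\]
for $N$ large. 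Hence the count for $w'$ at level $i+1$ is at most the count for $w$ at level $i$, which is at most $R^{k(g,p)+C_{\rm test}\sigma}$.

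\emph{Projection case} $\mathfrak t=[p,t]$. For $z,z'\in P$ we have $|z-z'|\le\sqrt2\,2^{-p}$. Therefore $|(z-z')\cdot w'|\le W_{i+1}(t)$ implies
\[
 |(z-z')\cdot w|\le W_{i+1}(t)+\sqrt2\,2^{-p}\delta\le W_{i+1}(t)+\sqrt2 c_0\,2^{-t}R^{50\sigma}\le W_i(t)
\]
for large $N$. So the $\rho_P\otimes\rho_P$-mass of the event for $w'$ at level $i+1$ is at most that of the event for $w$ at level $i$. The right side of \eqref{def:projection} does not depend on $i$ or $w$, so $w'$ passes the test at level $i+1$. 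Replacing $c_0$ by a fixed smaller multiple gives the \enquote{fixed multiple of the $D_\ell^{-1}$ neighborhood} statement.

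\emph{Construction of the cutoff.} Let $U$ be the open $\delta/2$-neighborhood of $G_{i,\mathfrak t}(P)$ in $S^1$, and let $\phi_{\delta/2}$ be an even smooth probability bump on the circle supported in $[-\delta/2,\delta/2]$ with $\|\partial_\theta^j\phi_{\delta/2}\|_{L^1}\le C_j\delta^{-j}$. Define
\[
 w_{i,\mathfrak t}=\one_U*\phi_{\delta/2}.
\]
This has the required properties.
\begin{itemize}
\item It satisfies $0\le w_{i,\mathfrak t}\le1$.
\item It equals $1$ on $G_{i,\mathfrak t}$, because every point of $G_{i,\mathfrak t}$ has its whole $\delta/2$-arc inside $U$.
\item Its support lies in the closed $\delta$-neighborhood of $G_{i,\mathfrak t}$, which is contained in $G_{i+1,\mathfrak t}$ by the key claim.
\item It is antipodally even, since $G_{i,\mathfrak t}$ is invariant under $w\mapsto-w$ and convolution commutes with the antipodal map.
\item Differentiating the bump gives $\|\partial_\theta^jw_{i,\mathfrak t}\|_\infty\le C_j\delta^{-j}=C_jc_0^{-j}D_\ell^j$, with constants independent of the class, the anchor, the test index and $N$.
\end{itemize}
The only measurability point is that $U$ is open, so no Borel issue arises, even though $G_{i,\mathfrak t}$ itself is merely Borel.

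The main obstacle is the perturbation estimate. There one has to check that the single choice $\delta\sim D_\ell^{-1}$ handles both test types and both regimes of $D_\ell$. In the parallel case the relevant lever arm is the ball radius $30\cdot2^{-g}$; in the projection case it is the cell diameter $2^{-p}$. The $R^{50\sigma}$ slack in $D_\ell$ against the $R^{500\sigma}/J_0$ gap is exactly what makes both cases work once $N$ is large.
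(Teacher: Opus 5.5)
Your proposal is correct and follows essentially the paper's own route. The paper uses the same perturbation estimate: rotating by a fixed multiple of $D_\ell^{-1}$ moves the parallel statistic by $O(2^{-p}R^{50\sigma})$ and the projection statistic by $O(2^{-t}R^{50\sigma})$, and this is absorbed by the width gap $J_0^{-1}2^{-n}R^{500\sigma}$ once $N$ is large. It then mollifies the indicator of a thickened good set at angular scale $\sim D_\ell^{-1}$.

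One small repair is needed. The good sets need not be closed, so a direction at distance exactly $\delta$ from $G_{i,\mathfrak t}$ need not have a good direction within angle $\delta$. Your key claim as stated therefore does not yet cover the closed $\delta$-neighborhood that contains the support. Apply the claim with $2\delta$ instead; your estimate allows any fixed multiple. The paper handles this the same way, by noting that every point of $G^{[3\delta_\ell]}$ has a good direction within $4\delta_\ell$.
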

\begin{proof}
Changing angle by at most $cD_\ell^{-1}$ changes the parallel distance by
at most $C c2^{-p}R^{50\sigma}$: use the radius $30\,2^{-g}$ and
$\ell=p-g$. In the projection test the change is at most
$Cc2^{-t}R^{50\sigma}$, using $|z-z'|\le\sqrt2\,2^{-p}$ and $\ell=t-p$.
The corresponding decrease in test width is
$J_0^{-1}2^{-n}R^{500\sigma}$. For sufficiently large $N$ it exceeds the
perturbations, including a fixed additional margin for closure. Consequently
the narrower-width test at the perturbed direction counts only objects already
counted by the old test. This proves the buffered inclusion, including a closed
neighborhood by the same strict width reserve.

Here is one fixed construction. Choose an even nonnegative
$\phi\in C_c^\infty((-1,1))$ of integral one, periodize
$\phi_\delta(\theta)=\delta^{-1}\phi(\theta/\delta)$ on the circle, and set
\[
\delta_\ell=(1000D_\ell)^{-1},\qquad
G^{[r]}=\{w:\operatorname{dist}_{S^1}(w,G)\le r\},\qquad
w_{i,\mathfrak t}(x,\cdot)
=\phi_{\delta_\ell}*\one_{G_{i,\mathfrak t}(x)^{[2\delta_\ell]}}.
\]
For an empty good set take the cutoff zero. For a nonempty one the distance
function is continuous even when the good set is not closed. Every point of
$G^{[3\delta_\ell]}$ has an old good direction within $4\delta_\ell$;
the strict width reserve therefore gives
$G^{[3\delta_\ell]}\subseteq G_{i+1,\mathfrak t}$.
The convolution equals one on $G$, lies between zero and one, and has
\[
\supp\partial_\theta^j w_{i,\mathfrak t}
\subseteq\supp w_{i,\mathfrak t}
\subseteq G^{[3\delta_\ell]}\subseteq G_{i+1,\mathfrak t},\qquad
\|\partial_\theta^j w_{i,\mathfrak t}\|_\infty
\le 1000^j\|\phi^{(j)}\|_1D_\ell^j.
\]
For $j=0$ use the sharper bound one. This fixes every cutoff once and for all,
preserves antipodal symmetry, and makes the numerical derivative bounds
independent of $N$, the class, the test index, and the layer. We retain the
closed geometric windows in the tests above; no closedness of their good
sets is assumed.
\end{proof}

\subsection{Choosing intervals of scales}
We choose one representative depth in each fixed block. This makes the
choices consistent when an interval is replaced by a smaller one. The
list of tests for an interval includes every test that a later recursive
step inside that interval may require.

Partition $\{0,\ldots,N\}$ into consecutive blocks of $q_0$ indices,
with a possible shorter last block. Choose a minimizer of $A$ in each block,
breaking ties by the smaller depth. An interval $[a,t]$ is terminal if
$t-a\le100q_0$. For each nonterminal interval, once and for all choose $p$
minimizing $A$ among representatives of blocks wholly inside
$[a+q_0,t-q_0]$. Put $\ell=\min(p-a,t-p)$. It is a prefix choice if
$2p\le a+t$, and a suffix choice otherwise.

Define $\calM(a,t)$ by taking \emph{every} nonterminal integer subinterval
$[a',t']\subseteq[a,t]$, applying its fixed choice, and adding all parallel
test indices $[g,p]$ with $p-\ell\le g<p$. For a suffix choice also add the
projection test index $[p,t']$. Remove repeated test indices within each type.
An origin of a membership $\mathfrak t\in\calM(a,t)$ is a generating
subinterval inside this particular base. It is not part of test index identity.

\begin{lemma}\label{def:lists}
The eligible blocks of a nonterminal $[a,t]$ have union $[a_*,t_*]$, where
\[
a+q_0\le a_*\le a+2q_0,\quad t-2q_0\le t_*\le t-q_0,
\quad A_n\ge A_p\ (a_*\le n\le t_*).
\]
Lists are nested under interval containment. Their cardinalities are at most
$2(N+1)(1+(N+1)/q_0)$, and all anchors belong to at most $\kappa^{-1}+2$
fixed representatives. For every membership of length $\ell'$ and anchor $p$
there is an origin $[b_0,u_0]$ inside its base with
\begin{equation}\label{def:origin}
[p-\ell',p+\ell']\subseteq[b_0,u_0].
\end{equation}
That origin also supplies every parallel $[g,p]$ with $p-\ell'\le g<p$.
In particular the maximal length in $\calM(a,t)$ is at most $(t-a)/2$.
\end{lemma}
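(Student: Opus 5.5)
The proof is mostly bookkeeping with block structure, and I would take the assertions in the order stated.

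\emph{Eligible blocks.} The blocks are consecutive runs of $q_0$ indices starting at $0$. The blocks lying wholly inside $[a+q_0,t-q_0]$ are therefore a consecutive run. I would verify that the first such block begins at most $q_0$ after $a+q_0$ and the last ends at least $q_0$ before $t-q_0$, possibly shortened for the final block. This gives $a+q_0\le a_*\le a+2q_0$ and $t-2q_0\le t_*\le t-q_0$. Nonemptiness follows from $t-a>100q_0$.

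\emph{Minimality of $A_p$.} For $a_*\le n\le t_*$, the index $n$ lies in some eligible block. That block's representative $r$ minimizes $A$ over the block, so $A_n\ge A_r$. By the choice of $p$ among representatives, $A_r\ge A_p$.

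\emph{Nesting and cardinality.} Nesting is immediate: $\calM(a,t)$ is defined as a union over all nonterminal subintervals of $[a,t]$, and each subinterval's choice is fixed once and for all, independent of the base. For cardinality, every anchor is a block representative, so there are at most $(N+1)/q_0+1$ anchors. Each anchor contributes at most $N+1$ parallel indices $[g,p]$ and at most $N+1$ projection indices $[p,t']$. This gives at most $2(N+1)(1+(N+1)/q_0)$ indices. Since $q_0\ge\kappa N$, the number of representatives is at most $(N+1)/(\kappa N)+1\le\kappa^{-1}+2$ for large $N$. This last step should be checked carefully; it may need the exact count of blocks, $\lceil (N+1)/q_0\rceil$.

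\emph{Origins.} A membership of length $\ell'$ arises from a generating subinterval $[a',t']$ with choice $p$ and $\ell=\min(p-a',t'-p)$. I would take $[b_0,u_0]=[a',t']$ as the origin. For parallel indices $\ell'=p-g\le\ell$. For the suffix projection index the length is $t'-p$, and because the choice is a suffix, $t'-p<p-a'$, so $\ell=t'-p=\ell'$. In both cases $[p-\ell',p+\ell']\subseteq[p-\ell,p+\ell]\subseteq[a',t']$, and this origin by construction supplies all parallel $[g,p]$ with $p-\ell\le g<p$, hence those with $p-\ell'\le g$. Finally, $\ell'\le\ell\le(t'-a')/2\le(t-a)/2$.

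The main obstacle I expect is the off-by-one bookkeeping at the block boundaries, namely the shortened last block and whether $\lceil\kappa N\rceil$ blocks fit. I would handle it by writing the blocks explicitly as $[jq_0,(j+1)q_0-1]$ and checking the endpoint inequalities directly.
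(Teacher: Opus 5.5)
Your proposal is correct and follows the paper's proof essentially step by step: eligible blocks are consecutive with endpoint gaps of at most one block, minimality comes from the block representatives, nesting comes from base-independent choices, the count is by type, representative and one further depth, and the generating subinterval serves as the origin, with $\ell'=\ell$ in the suffix case. Two small remarks. First, in the eligible-block step you mean that the last eligible block ends \emph{at most} $q_0$ before $t-q_0$, as your displayed conclusion $t_*\ge t-2q_0$ requires. Second, the representative count needs no large-$N$ caveat, since $\lceil (N+1)/q_0\rceil\le N/q_0+1\le\kappa^{-1}+1$.
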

\begin{proof}
Eligible blocks are consecutive; their endpoint gaps are at most one block.
Each block representative minimizes on its block, and $p$ minimizes over these
representatives. An origin's choice is fixed independently of the containing
base, which proves nesting. A test index is specified by type, representative
and one other depth, proving the count. A parallel test index generated by
$[b_0,u_0]$ has $\ell'\le\ell$; a projection test index occurs only on a suffix,
where $\ell'=u_0-p=\ell$. Both give \eqref{def:origin} and the additional parallel
test indices. Deduplication removes occurrences but not the existence of an
origin inside each base in which membership is asserted.
\end{proof}
\begin{remark}
One global origin need not serve every membership. Write $q=q_0$ and assume
$151q\le N$. The bases $[0,101q]$ and $[50q,151q]$ can both choose $p=75q$
and generate $[75q-1,75q]$. For example
$A_n=(|n-75q|-75q)/N$ comes from the nondecreasing $2$-Lipschitz profile
$m(u)=\max\{0,2u-150q/N\}$. Their intersection $[50q,101q]$ is terminal,
so no common nonterminal origin lies inside both.
The global scheduling rule is fixed; a witness for membership is local to its base.
\end{remark}

We prove the mass estimate for the directional tests and cutoffs fixed
above. In particular all geometric
windows remain closed inequalities. An auxiliary direction filter is used
only in an estimate; it is not inserted in the root measure.

\subsection{Prepared data and common constants}
\label{mass:prepared-data}
Fix $0<\eta<1$, $p_{\rm rad}>1$, $C_F,M_*<\infty$, and
$0<a_*\le d_*<\infty$. Let $\mu_1,\mu_2$ be probabilities on compact
sets $K_1,K_2\subset[0,1)^2$, with
\begin{equation}\label{mass:prepared}
 \mu_j(B(z,r))\le C_Fr^{1+\eta}\quad(0<r\le1),\qquad
 a_*\le |x-y|\le d_*\quad(x\in K_1,\ y\in K_2).
\end{equation}
We may enlarge $C_F$ to be at least one. Assume that all cross-directions
$e(x,y)=(x-y)/|x-y|$ lie in a fixed small arc, with uniform larger arcs
available for the geometric cutoff in Proposition~\ref{compat:inverse}.
The arc hypothesis is needed by the frequency estimate, not by the mass
estimate below.

Assume the opposite-endpoint direction pushforwards have jointly Borel
densities $F_{1,x}(w)$ and $F_{2,y}(w)$ with respect to arclength $dw=d\theta$,
where both maps use $e(x,y)$, and assume only the integrated bounds
\begin{equation}\label{mass:radial-lp}
 \int_{K_1}\int_{S^1}F_{1,x}^{p_{\rm rad}}\,dw\,d\mu_1(x)\le M_*,
 \qquad
 \int_{K_2}\int_{S^1}F_{2,y}^{p_{\rm rad}}\,dw\,d\mu_2(y)\le M_*.
\end{equation}
An almost-everywhere bound $M_*$ for each inner integral is sufficient.
If the second radial map is initially defined with the opposite sign,
compose its density with $w\mapsto-w$. Joint Borel versions can also be
obtained as Radon--Nikodym derivatives of the joint endpoint--direction
measures; their disintegrations agree with the given fibre densities
outside endpoint null sets.

Set $0<\chi\le\eta$, $\beta=\chi/20$, and choose the parameters of the paper
with $12\sigma<\chi/8$. Fix
\begin{equation}\label{mass:structural-constants}
 C_{\rm prof}=10,\qquad C_{\rm test}\ge1000,\qquad
 Q_N=N^{20/\sigma},\qquad
 D_N=2(N+1)(\kappa^{-1}+2)+1.
\end{equation}
Every constant and starting threshold in this section is common to all
prepared pairs satisfying these fixed data. In particular it does not
depend on an individual class mass, profile, or anchor. The constants
$C_{\rm prof}$ and the permissible choice $C_{\rm test}=1000$ are numerical.

\subsection{Finite-tree regularization with original class weights}
\begin{lemma}\label{mass:classes}
For all sufficiently large $N$, a probability $\mu$ on $[0,1)^2$
has disjoint retained classes $\mathcal C_N$, each a union of depth-$N$
dyadic cells, with original weights $a_C=\mu(C)$ satisfying
\begin{equation}\label{mass:class-masses}
 \#\mathcal C_N\le Q_N,\qquad a_C\ge R^{-\sigma},\qquad
 \mu\!\left(\bigcup_{C\in\mathcal C_N}C\right)
 \ge1-R^{-2}-Q_NR^{-\sigma}.
\end{equation}
For each retained class, the probability $\rho_C=\mu|_C/a_C$ has
a nondecreasing $2$-Lipschitz profile $m_C:[0,1]\to[0,2]$ with $m_C(0)=0$
such that, at every depth $0\le n\le N$ and every occupied cell $P$,
\begin{equation}\label{mass:class-profile}
 R^{-m_C(n/N)-10\sigma}\le\rho_C(P)
 \le R^{-m_C(n/N)+10\sigma}.
\end{equation}
If $\mu$ satisfies \eqref{mass:prepared}, then also
\begin{equation}\label{mass:profile-lower}
 m_C(n/N)\ge(1+\eta)n/N-12\sigma,\qquad
 A_{C,n}\ge\eta n/N-12\sigma\ge\chi n/N-12\sigma.
\end{equation}
\end{lemma}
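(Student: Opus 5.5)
The plan is a bottom-up, block-sampled regularization of the dyadic tree of $\mu$, in the spirit of Keleti--Shmerkin. Branching is regularized only at about $\sigma^{-1}$ sampled depths. Between sampled depths, the Lipschitz property and the two-sided bound \eqref{mass:class-profile} come for free from monotonicity of masses along ancestry.

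\emph{Setup.} Put $L=\lfloor\sigma N\rfloor$ and sampled depths $n_0=0<n_1<\dots<n_K=N$ with $n_{k+1}-n_k\le L$, so $K\le 2/\sigma$. First discard every depth-$N$ cell $Q$ with $\mu(Q)<R^{-4}$. There are at most $4^N=R^2$ such cells, so their total mass is at most $R^{-2}$.

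\emph{Construction.} Process the sampled depths from the bottom up. At depth $n_K=N$, sort the surviving cells by the integer $j$ with $R^{-(j+1)\sigma}<\mu(Q)\le R^{-j\sigma}$; there are $O(\sigma^{-1})$ labels. Inductively, suppose each depth-$n_{k+1}$ cell carries a label recording the labels chosen below it. For a depth-$n_k$ cell $P$ and each label $\lambda$, form
\begin{itemize}
\item the number of its depth-$n_{k+1}$ descendants with label $\lambda$, rounded to a power $R^{j\sigma}$;
\item the piece of $\mu$ carried by those descendants.
\end{itemize}
Keep the pair (label, rounded count) that carries the largest such mass. Then prune $P$ to those descendants. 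Each step has $O(N\sigma^{-1})$ options, since counts are at most $R^2$. The final classes are indexed by label sequences, so $\#\mathcal C_N\le (CN/\sigma)^{2/\sigma+1}\le N^{20/\sigma}=Q_N$ for large $N$.

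Pigeonholing over options loses a fraction of mass at each level. Iterated over $K$ levels, this would give only polynomial retention. To avoid that, I would instead \emph{partition} rather than select: keep every option as a separate class, and delete only classes of total weight $a_C<R^{-\sigma}$. That loss is at most $Q_NR^{-\sigma}$, which gives \eqref{mass:class-masses} with the original weights $a_C=\mu(C)$ untouched.

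\emph{Verifying the profile.} Within a class $C$, every occupied depth-$n_k$ cell $P$ has the same rounded number of occupied depth-$n_{k+1}$ children. Each of these has the same rounded $\rho_C$-mass. So $\rho_C(P)$ equals a common model value $M(n_k)$ up to $R^{O(\sigma)}$.

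Define $m_C$ at $n_k/N$ by $M(n_k)=R^{-m_C(n_k/N)}$, and interpolate linearly. Monotonicity follows from nesting. The Lipschitz bound follows from
\[
\rho_C(P_{n_k})=\#\{\text{occupied children}\}\cdot M(n_{k+1})\le 4^{\,n_{k+1}-n_k}M(n_{k+1}).
\]
This gives slope at most $2$ up to $R^{O(\sigma)}$, and I would absorb that error by a small adjustment of $m_C$. For an intermediate depth $n_k<n<n_{k+1}$, the ancestry bounds give $M(n_{k+1})R^{-O(\sigma)}\le\rho_C(P_n)\le M(n_k)R^{O(\sigma)}$. The two model values differ by at most $R^{2L/N}\le R^{2\sigma}$, which yields \eqref{mass:class-profile} with constant $10\sigma$ once the rounding errors are tallied.

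\emph{Frostman lower bound.} Under \eqref{mass:prepared},
\[
\rho_C(P)\le \mu(P)/a_C\le C_F(\sqrt2\,2^{-n})^{1+\eta}R^{\sigma}.
\]
Together with the lower half of \eqref{mass:class-profile}, this gives $m_C(n/N)\ge(1+\eta)n/N-11\sigma-O(1/N)$, hence \eqref{mass:profile-lower} for large $N$, and $\eta\ge\chi$ gives the last inequality.

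\emph{Main obstacle.} I expect the main difficulty to be the bookkeeping that keeps the accumulated rounding error at $O(\sigma)$ rather than $O(K\sigma)=O(1)$. Rounding happens at every sampled level. The fix I would try first is to round \emph{absolute} class masses $\rho_C(P)$ against a single global grid at each sampled depth, rather than rounding ratios; then errors do not compound across levels. If that fails, I would shrink the rounding mesh to $\sigma^2$, which raises the class count to $N^{O(1/\sigma)}$; the allowance $Q_N=N^{20/\sigma}$ should still accommodate that.
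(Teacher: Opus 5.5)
Your architecture matches the paper's: discard leaves of mass below $R^{-4}$, sample depths every $\lfloor\sigma N\rfloor$, label bottom-up by child counts, keep every label as its own class, discard classes with $a_C<R^{-\sigma}$, interpolate linearly, and finish with the Frostman comparison. The gap is the one you flagged, and as written it is fatal. Rounding leaf masses to powers of $R^{\sigma}$ is harmless because it happens once. Child counts are different: over one block a count lies in $[1,4^{\sigma N}]=[1,R^{2\sigma}]$. Rounding it to a power of $R^{\sigma}$ therefore lets one occupied cell of a class have $1$ child and another nearly $R^{\sigma}$ children, at every sampled level. Over $K\asymp1/\sigma$ levels, the masses of occupied cells at a fixed depth in one class can then differ by $R^{\Theta(1)}$. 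So \eqref{mass:class-profile} really fails for these classes; this is not just a bookkeeping loss. The same defect breaks your Lipschitz step. An error $R^{O(\sigma)}$ over a block of length $\sigma N$ changes the slope by $O(1)$, so a small adjustment of $m_C$ cannot remove it.

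The paper rounds child counts to dyadic bins $[b_i,2b_i)$ with $b_i$ a power of two, and leaf masses to $[2^k,2^{k+1})$. Each level then costs only a factor $2$. At sampled depths this gives $2^{-(2J+2)}(b_1\cdots b_i)^{-1}\le\rho_C(P)\le2^{2J+2}(b_1\cdots b_i)^{-1}$ with $J\le2/\sigma+1$. This $N$-independent factor, plus the interpolation loss $2\sigma N$ in $\log_2$, is at most $10\sigma N$ once $N\ge20\sigma^{-2}$. The price is $2N+1$ bins per coordinate, hence $(4N+1)(2N+1)^J\le N^{20/\sigma}$ labels; this is why $Q_N$ has its form. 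Define $m_C(n_i/N)=N^{-1}\sum_{h\le i}\log_2 b_h$ from the recorded lower bin endpoints. Then monotonicity and slope at most $2$ hold exactly, because $1\le b_h\le2^{2(n_h-n_{h-1})}$.

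Your fallback with mesh $R^{\sigma^2}$ also works, since the compounded error is $R^{K\sigma^2}=R^{O(\sigma)}$. Because $R^{\sigma^2}=2^{\sigma^2N}$, this mesh is coarser than dyadic and even gives an $N$-independent class count. You must likewise define $m_C$ from lower bin endpoints rather than from actual masses. The \emph{global grid} variant is not well defined as stated, since $\rho_C$ depends on the class still being built.

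Finally, labels must live on leaves, not on cells. Each appended coordinate should depend only on the pair (depth-$n_k$ ancestor, old label). Later labelling then keeps or discards whole groups inside an earlier parent, which makes every occupied cell of a class have exactly its recorded child count. Saying that each depth-$n_{k+1}$ cell carries a single label is incompatible with partitioning.
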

\begin{proof}
Delete depth-$N$ leaves of mass less than $R^{-4}$. There are $R^2$
leaves, so the discarded mass is at most $R^{-2}$.
Put $q_{\rm reg}=\lfloor\sigma N\rfloor$ and use selected levels
$0=n_0<n_1<\cdots<n_J=N$, with $n_i=\min(iq_{\rm reg},N)$.
For $N\ge20\sigma^{-2}$, each gap is at most $\sigma N$ and
$J\le2/\sigma+1$.

First label every remaining leaf by its dyadic mass bin
$[2^k,2^{k+1})$, with $-4N\le k\le0$. Proceed upward through the selected
levels. At a depth-$n_i$ parent and for each old label, count its
depth-$n_{i+1}$ children containing that old label. If the count is positive,
append its dyadic integer bin to every leaf with that old label in that
parent. Each new coordinate has at most $2N+1$ possibilities.
The final label count is therefore at most
\[
 (4N+1)(2N+1)^J\le N^{20/\sigma}=Q_N.
\]

Fix one final label. At every occupied selected-level parent its number of
occupied children at the next selected level lies in $[b_i,2b_i)$,
where $b_i$ is a power of two fixed by the label. This simultaneous
branching assertion survives every later upward labelling: a later label
retains or discards the whole previously labelled group inside an earlier
parent. It cannot select some of its same-labelled children and reject others.
All leaves in the final class have masses in one interval $[w,2w)$.
Define
\[
 m_C(n_i/N)=N^{-1}\sum_{h=1}^{i}\log_2 b_h
\]
and interpolate linearly. Since
$1\le b_i\le2^{2(n_i-n_{i-1})}$, the interpolated profile has all slopes
in $[0,2]$.
Counting the descendant leaves and their comparable masses gives, at a
selected-level cell $P$,
\[
 2^{-(2J+2)}(b_1\cdots b_i)^{-1}
 \le\frac{\mu(P\cap C)}{\mu(C)}
 \le2^{2J+2}(b_1\cdots b_i)^{-1}.
\]
For an intermediate depth, bound its mass from above by its selected-level
ancestor and from below by one occupied selected-level descendant.
The interpolation error is at most $2\sigma N$ in the base-two logarithm.
Thus the total error is at most $2\sigma N+2J+2\le10\sigma N$,
which proves \eqref{mass:class-profile}.

Delete final classes with original mass below $R^{-\sigma}$.
This loses at most $Q_NR^{-\sigma}$ and changes none of the normalized
structures of the remaining classes. This proves \eqref{mass:class-masses}.
For a retained class, its normalized cell mass obeys
\[
 \rho_C(P)\le a_C^{-1}\mu(P)
 \le 2C_FR^\sigma\,2^{-(1+\eta)n}.
\]
The square is contained in a ball of radius $\sqrt2\,2^{-n}$;
for radii above one use total mass, which gives the same harmless bound.
Combine this inequality with the lower bound in
\eqref{mass:class-profile}. At a common threshold with
$\log_R(2C_F)\le\sigma$, the result is \eqref{mass:profile-lower}.
\end{proof}

Fix these classes, weights and profiles once for each endpoint and each $N$.
A class retains its own full occupied-cell family and its own actual
conditional probabilities. Later restrictions do not redefine either.

\begin{lemma}\label{mass:profile-consequences}
For a normalized class with profile constant $C_{\rm prof}$ and for a
fixed $B_0<\infty$,
\begin{equation}\label{mass:count-ball}
 \#\{Q\in\mathcal D_p:\rho(Q)>0,\ c_Q\in B(z,B_02^{-g})\}
 \le C_{B_0}R^{I(g,p)+2C_{\rm prof}\sigma}.
\end{equation}
Write
$\delta_A(p,t)=A_p-\min_{p\le n\le t}A_n$ for the projection-side
defect, which is distinct from $k(p,t)=A_t-\min_{p\le n\le t}A_n$.
For an occupied depth-$p$ cell $P$, let
$\lambda_P=(z\mapsto2^p(z-c_P))_\#(\rho|_P/\rho(P))$.
If $H=2^{t-p}$, then
\begin{equation}\label{mass:conditional-ball}
 \lambda_P(B(z,r))\le C R^{\delta_A(p,t)+2C_{\rm prof}\sigma}r
 \qquad(H^{-1}\le r\le1).
\end{equation}
\end{lemma}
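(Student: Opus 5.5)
Both estimates follow from the two-sided profile bound \eqref{def:profile} together with the normalization identity \eqref{def:identity}. I plan to prove \eqref{mass:count-ball} by counting occupied cells against the total mass they carry, and \eqref{mass:conditional-ball} by rescaling the same counting argument to the conditional measure.

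\emph{Counting estimate.} Cover $B(z,B_02^{-g})$ by at most $C_{B_0}$ dyadic cells of depth $g$. This is possible because a ball of radius $B_02^{-g}$ meets $O(B_0^2)$ cells of side $2^{-g}$. Only occupied depth-$g$ cells matter, since an occupied depth-$p$ cell lies in an occupied ancestor. Fix one occupied depth-$g$ cell $G$. Every occupied depth-$p$ descendant $Q$ satisfies $\rho(Q)\ge M(p)R^{-C_{\rm prof}\sigma}$, while $\sum_Q\rho(Q)=\rho(G)\le M(g)R^{C_{\rm prof}\sigma}$. Therefore the number of descendants is at most $R^{m(p/N)-m(g/N)+2C_{\rm prof}\sigma}=R^{I(g,p)+2C_{\rm prof}\sigma}$, which is the descendant bound already stated after \eqref{def:profile}. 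A centre $c_Q$ lying in the ball forces $Q$ into one of the $C_{B_0}$ covering cells, enlarged by one side length if necessary; this costs only a change of $B_0$. Summing over these cells gives \eqref{mass:count-ball}.

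\emph{Conditional ball estimate.} Fix $H^{-1}\le r\le1$. Choose the integer $n\in[p,t]$ with $2^{-(n-p)}\approx r$, more precisely $2^{-(n-p)}\le r<2^{-(n-p)+1}$; here $r\ge H^{-1}$ guarantees $n\le t$. In rescaled coordinates, $B(z,r)$ corresponds to a ball of radius $r2^{-p}\le 2\cdot2^{-n}$ in the original plane. Such a ball meets at most a bounded number $C$ of depth-$n$ cells, and each occupied one has $\rho$-mass at most $M(n)R^{C_{\rm prof}\sigma}$. Since $\rho(P)\ge M(p)R^{-C_{\rm prof}\sigma}$, this gives
\[
\lambda_P(B(z,r))\le C\frac{M(n)}{M(p)}R^{2C_{\rm prof}\sigma}
=C\,2^{-(n-p)}R^{A_p-A_n}R^{2C_{\rm prof}\sigma},
\]
using $M(n)/M(p)=R^{-(m(n/N)-m(p/N))}=2^{-(n-p)}R^{-(A_n-A_p)}$, which is \eqref{def:identity} with a square root taken. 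Finally $2^{-(n-p)}\le r$, and $A_p-A_n\le A_p-\min_{p\le n'\le t}A_{n'}=\delta_A(p,t)$. This yields \eqref{mass:conditional-ball}.

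\emph{Main obstacle.} The only delicate point is the bookkeeping at the endpoints. For $r$ near $1$ I would take $n=p$, and for $r$ near $H^{-1}$ the choice $n=t$ must remain admissible; I also need the constant $C$ to depend on neither $N$ nor $P$, which holds because all comparisons are with balls of radius comparable to the cell side. I do not expect any genuine analytic difficulty beyond this.
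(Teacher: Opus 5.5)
Your proposal is correct and follows the paper's proof essentially step for step. For \eqref{mass:count-ball} you cover the ball by boundedly many depth-$g$ cells and divide their upper mass bounds by the lower bound for an occupied depth-$p$ cell; like the paper, this implicitly uses $g\le p$. For \eqref{mass:conditional-ball} you choose $n\in[p,t]$ with $2^{-(n-p)}\le r<2^{-(n-p)+1}$, cover the unscaled ball by boundedly many depth-$n$ cells, divide by the actual lower bound for $\rho(P)$, and use $M(n)/M(p)=2^{-(n-p)}R^{A_p-A_n}$. Your explicit choice of $n$ also settles the endpoint bookkeeping that the paper leaves to ``bounded rounding factors.''
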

\begin{proof}
The ball in \eqref{mass:count-ball} meets a bounded number of depth-$g$
cells. Sum their upper mass bounds and divide by the lower mass bound
for an occupied depth-$p$ cell.
For \eqref{mass:conditional-ball}, choose $n\in[p,t]$ with
$2^{-(n-p)}$ comparable to $r$. The corresponding unscaled ball meets
a bounded number of depth-$n$ cells. Divide their upper bounds by the
actual lower bound for $\rho(P)$ and use
\[
 R^{m(p/N)-m(n/N)}
 =2^{-(n-p)}R^{A_p-A_n}\le2^{-(n-p)}R^{\delta_A(p,t)}.
\]
The bounded rounding factors are numerical. The denominator here is
the actual mass $\rho(P)$, not the model mass $M(p)$.
\end{proof}

\subsection{Exceptional directions for a single test}
\begin{lemma}\label{mass:fourier-average}
Let $\lambda$ be a probability supported in a fixed bounded planar set,
and suppose
$\lambda(B(z,r))\le K_b r$ for $H^{-1}\le r\le1$,
where $1\le H\le R$ and $K_b\ge1$.
Let $\xi$ be a spherical subprobability with
$\xi(B(w,r))\le K_\omega r$ for $R^{-1}\le r\le1$,
where $K_\omega\ge1$. Then
\begin{equation}\label{mass:fourier-average-bound}
 \int_{-H}^{H}\int_{S^1}|\widehat\lambda(sw)|^2\,d\xi(w)\,ds
 \le C K_bK_\omega(1+\log H).
\end{equation}
The constant depends only on the fixed bounded support set.
\end{lemma}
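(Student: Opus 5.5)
The plan is to expand the square, integrate in $s$ first against a smooth majorant of $\one_{[-H,H]}$, and so reduce the left side of \eqref{mass:fourier-average-bound} to a pair integral. That pair integral is controlled by the angular Frostman bound on $\xi$ and then by a dyadic shell decomposition using the ball bound on $\lambda$.

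\emph{Step 1: smooth majorant.} Fix once and for all an even $\psi_0\in C_c^\infty([-1/2,1/2])$, not identically zero, and put $\psi=c_0|\check\psi_0|^2$. Then $\psi\ge0$, and its Fourier transform $\widehat\psi=c_0'\,\psi_0*\widetilde{\psi_0}$ is bounded and supported in $[-1,1]$. Since $\check\psi_0$ is continuous with $\check\psi_0(0)\ne0$, after a fixed dilation and choice of $c_0$ we may assume $\psi\ge1$ on $[-1,1]$. Hence $\one_{[-H,H]}(s)\le\psi(s/H)$. By Tonelli, and then Fubini (everything is finite because $\psi(\cdot/H)\in L^1$ and the measures are finite),
\[
 \int_{-H}^{H}|\widehat\lambda(sw)|^2\,ds
 \le\iint\int\psi(s/H)e^{-is\,w\cdot(x-y)}\,ds\,d\lambda(x)d\lambda(y)
 =\iint H\widehat\psi\bigl(H\,w\cdot(x-y)\bigr)\,d\lambda(x)d\lambda(y).
\]
The right side is real. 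It is bounded by $C H\iint\one_{\{|w\cdot(x-y)|\le1/H\}}\,d\lambda\,d\lambda$.

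\emph{Step 2: angular integration.} Integrate in $d\xi(w)$ and exchange the order. For fixed $z=x-y\ne0$ with $|z|=\rho$, the set $\{w:|w\cdot z|\le1/H\}$ is contained in two angular balls about $\pm z^\perp/\rho$ of radius $C/(H\rho)$. The scale restriction $r\ge R^{-1}$ in the hypothesis on $\xi$ is handled as follows:
\begin{itemize}
\item Because $\rho\le\operatorname{diam}$ of the fixed support set and $H\le R$, we have $C/(H\rho)\ge c/R$, and a ball of radius at least $c/R$ can be enlarged to one of radius at least $R^{-1}$ at a fixed cost.
\item For radii above one we use that $\xi$ is a subprobability.
\end{itemize}
This gives $\xi\{|w\cdot z|\le1/H\}\le CK_\omega\min(1,1/(H\rho))$. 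The diagonal $z=0$ is included in the first case through the bound $1$. So the left side is at most
\[
 CK_\omega\iint\min\bigl(H,|x-y|^{-1}\bigr)\,d\lambda(x)d\lambda(y).
\]

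\emph{Step 3: shells.} Fix $x$.
\begin{itemize}
\item Pairs with $|x-y|<H^{-1}$ have $\lambda$-mass at most $K_bH^{-1}$, which contributes at most $K_b$.
\item For shells $2^{-j-1}\le|x-y|<2^{-j}$ with $H^{-1}\le2^{-j}\le1$, the mass is at most $K_b2^{-j}$ and the weight is at most $2^{j+1}$. Each shell therefore contributes $2K_b$, and there are at most $1+\log_2H$ of them.
\item Distances above one, up to the fixed diameter, carry weight at most one and total mass at most one, so they contribute at most $1\le K_b$.
\end{itemize}
Integrating in $d\lambda(x)$ (a probability) gives $CK_b(1+\log H)$. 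Combined with Step 2, this proves \eqref{mass:fourier-average-bound} with a constant depending only on the support set.

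The only delicate point is Step 2's bookkeeping at the extreme scales: the angular radius $1/(H\rho)$ can exceed one (for small $\rho$), and it must stay above the admissible threshold $R^{-1}$. The argument handles the first case by the subprobability bound and the second by $H\le R$ together with bounded support. Step 1 is routine once the majorant $\psi$ with compactly supported, bounded transform is fixed.
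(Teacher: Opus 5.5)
Your proof is correct, and it takes a genuinely different route from the paper's. The paper works on the frequency side:
\begin{itemize}
\item it splits $1\le|s|\le H$ into dyadic bands $B\le|s|\le 2B$;
\item it pushes $ds\,d\xi$ forward to a measure $\Lambda_B$ on the annulus $|v|\sim B$ with $\Lambda_B(B(v,1))\le CK_\omega B^{-1}$;
\item it uses the local-constancy bound $|\widehat\lambda(v)|^2\le C\int|\widehat\lambda(u)|^2\Phi(v-u)\,du$, obtained from a smooth spatial cutoff, to get $\Phi*\Lambda_B\le CK_\omega/B$, with a separate tail estimate for $|u|>4B$;
\item it shows $\int_{|u|\le 4B}|\widehat\lambda|^2\le CK_bB$ by a Gaussian majorant.
\end{itemize}
There the logarithm counts frequency bands. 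You instead expand the square once and integrate out $s$ against a majorant whose transform has compact support. This reduces everything to the slab masses $\xi\{w:|w\cdot(x-y)|\le C/H\}$ and the truncated energy $\iint\min(H,|x-y|^{-1})\,d\lambda\,d\lambda$. Your logarithm counts spatial shells $H^{-1}\le|x-y|\le 1$, which is the dual picture.

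Your version is shorter: it needs no spatial cutoff, no uncertainty-principle smoothing and no tail estimate. The $R^{-1}$ threshold is handled by the same observation as in the paper, namely $H\le R$ together with bounded support. The paper's version keeps the two hypotheses in separate estimates, one on $\Lambda_B$ and one on $\lambda$, at the cost of that extra bookkeeping. Your argument also shows that the projection case of Proposition~\ref{mass:per-test} does not need its round trip through Fourier space. There the paper passes from a slab collision to Fourier space by a squared-sinc majorant and then applies this lemma; your Steps 2--3 bound the slab collision directly.

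One small repair is needed in Step 1. ``Not identically zero'' does not give $\check\psi_0(0)\ne0$. Also, dilating $\psi$ to make it at least one on $[-1,1]$ would enlarge the support of $\widehat\psi$. Take $\psi_0\ge0$ instead. Then for $|t|\le1$ you have $\operatorname{Re}\check\psi_0(t)\ge\frac{\cos(1/2)}{2\pi}\int\psi_0>0$ directly, so no dilation is needed and $\widehat\psi$ stays supported in $[-1,1]$. Even a support $[-C,C]$ would only change the slab width to $C/H$, which is harmless.
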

\begin{proof}
The part $|s|\le1$ is bounded by two. On a dyadic band
$B\le |s|\le\min(2B,H)$, push forward $ds\,d\xi(w)$ by $(s,w)\mapsto sw$
and denote the resulting measure by $\Lambda_B$.
It has mass at most $2B$, support in the annulus $B\le|v|\le2B$, and
\[
 \sup_v\Lambda_B(B(v,1))\le C K_\omega B^{-1}.
\]
Indeed a unit ball restricts $s$ to bounded-length intervals and $w$ to
a bounded number of arcs of radius $C/B$. If that radius exceeds one
use the total mass bound; otherwise it is above $R^{-1}$, as $B\le H\le R$.

Choose a smooth spatial cutoff equal to one on $\supp\lambda$.
The Fourier convolution identity and Cauchy--Schwarz imply
\[
 |\widehat\lambda(v)|^2
 \le C\int|\widehat\lambda(u)|^2\Phi(v-u)\,du
\]
for a fixed nonnegative rapidly decreasing $\Phi$.
The unit-ball estimate, summed over unit squares, gives
$\Phi*\Lambda_B(u)\le C K_\omega/B$.
For $|u|>4B$ the stronger tail estimate is
$\Phi*\Lambda_B(u)\le C_M B(1+|u|)^{-M}$.
A Gaussian majorant and its Fourier transform give
\[
 \int_{|u|\le4B}|\widehat\lambda(u)|^2\,du
 \le C B^2\iint e^{-cB^2|z-z'|^2}\,d\lambda(z)d\lambda(z')
 \le C K_b B.
\]
For the last inequality, use the ball bound first at radius $B^{-1}$
and then on its doubled annuli up to radius one; these radii are all
at least $H^{-1}$. Beyond radius one use total mass.
It follows that the band integral is at most $C K_bK_\omega$
on $|u|\le4B$. On the complementary region use $|\widehat\lambda|\le1$
and the tail estimate with $M>3$; its integral is bounded by a constant.
Summing the at most $C(1+\log H)$ bands proves the lemma.
\end{proof}

\begin{proposition}
\label{mass:per-test}
Use precisely the tests in Section~\ref{sec:profiles}, with their closed inequalities
$\dist\le W_i$ and $|(z-z')\cdot w|\le W_i$, and the constants
\eqref{mass:structural-constants}.
For every normalized class, test index $\mathfrak t$, occupied anchor $P$,
and spherical subprobability $\xi$ satisfying
\begin{equation}\label{mass:direction-regularity}
 \xi(B(w,r))\le R^\sigma r\qquad(R^{-1}\le r\le1),
\end{equation}
one has, at a common sufficiently large $N$,
\begin{equation}\label{mass:bad-direction}
 \xi(S^1\setminus G_{0,\mathfrak t}(P))\le R^{-\sigma}.
\end{equation}
\end{proposition}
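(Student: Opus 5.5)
The plan is to prove \eqref{mass:bad-direction} separately for the two test types. In each case I would bound the $\xi$-average of the tested quantity at the narrowest width $W_0$, and then apply Markov's inequality with the threshold $R^{\cdots+C_{\rm test}\sigma}$. Since $W_0(n)=2\cdot2^{-n}R^{500\sigma}$, the width costs only $R^{500\sigma}$. The direction regularity \eqref{mass:direction-regularity} costs $R^{\sigma}$, and the profile constants cost $R^{2C_{\rm prof}\sigma}=R^{20\sigma}$. Logarithmic factors are absorbed once $N$ is large. The margin $C_{\rm test}\ge1000>500+1+20+1$ then leaves $R^{-\sigma}$ at the end.

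\emph{Parallel test $[g,p]$.} I would write the count as $\sum_c\one\{\dist(c,c_P+\R w)\le W_0(p)\}$. The sum runs over the occupied depth-$p$ centers $c\in B(c_P,30\,2^{-g})$. Integrate in $\xi$ and use Tonelli. For a center at distance $d\ge W_0(p)$, the admissible $w$ lie in two antipodal arcs of length $\lesssim W_0(p)/d$, which is at least $R^{-1}$ for large $N$. Hence \eqref{mass:direction-regularity} gives $\xi$-measure $\lesssim R^{\sigma}W_0(p)/d$. I would group the centers in dyadic annuli $d\sim2^{-n}$ with $g\le n\le p$, counting them by \eqref{mass:count-ball} with $g$ replaced by $n$. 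Centers with $d<W_0(p)$ are counted trivially, and they lie in a ball of radius $\sim2^{-p}R^{500\sigma}$, so one more application of \eqref{mass:count-ball} at a shifted depth handles them. The average is then
\[
\lesssim R^{521\sigma}\sum_{n=g}^{p}2^{n-p}R^{I(n,p)}
=R^{521\sigma}\sum_{n=g}^{p}R^{A_p-A_n}\le (N+1)R^{521\sigma}R^{k(g,p)},
\]
using \eqref{def:identity}-type bookkeeping: $2^{n-p}R^{m(p/N)-m(n/N)}=R^{A_p-A_n}$. Markov's inequality against $R^{k(g,p)+C_{\rm test}\sigma}$ gives \eqref{mass:bad-direction}.

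\emph{Projection test $[p,t]$.} Rescale by $2^p$ around $c_P$ to pass to $\lambda_P$. The slab condition becomes $|(z-z')\cdot w|\le\delta$ with $\delta=2H^{-1}R^{500\sigma}$, where $H=2^{t-p}$. I would majorize $\one_{[-\delta,\delta]}$ by $C\psi(\cdot/\delta)$, with $\psi\ge0$, $\widehat\psi\ge0$ and $\widehat\psi$ compactly supported. Then
\[
(\lambda_P\otimes\lambda_P)\{|(z-z')\cdot w|\le\delta\}\lesssim\delta\int_{|s|\lesssim\delta^{-1}}|\widehat{\lambda_P}(sw)|^2\,ds.
\]
Integrating against $\xi$, I would apply Lemma~\ref{mass:fourier-average} with the truncation $H'=C\delta^{-1}\le H\le R$. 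That lemma needs the ball bound only for $r\ge H'^{-1}\ge H^{-1}$, which is provided by \eqref{mass:conditional-ball} with $K_b=CR^{\delta_A(p,t)+20\sigma}$. For the directions, $K_\omega=R^\sigma$. The average is then $\lesssim 2^{-(t-p)}R^{\delta_A(p,t)+521\sigma}\log R$. Since the right side of \eqref{def:projection} carries exactly $R^{\delta_A(p,t)}$, namely $A_p-\min_{[p,t]}A_n$, Markov's inequality again gives $R^{-\sigma}$.

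I expect the main obstacle to be the projection case. The Fourier majorant must be set up so that its frequency cutoff $H'$ stays in the range $1\le H'\le H$. This is what lets the conditional ball bound \eqref{mass:conditional-ball}, which is valid only down to radius $H^{-1}$, feed into Lemma~\ref{mass:fourier-average}. I would also check two boundary cases: $H'<1$, where the bound is trivial, and $\xi$ being only a subprobability, which causes no harm. The parallel case is routine dyadic counting.
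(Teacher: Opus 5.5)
Your proposal is correct and follows the paper's proof essentially step for step. For the parallel test you apply Markov's inequality to the $\xi$-averaged statistic, group centers dyadically, count them with \eqref{mass:count-ball}, and use $2^{n-p}R^{I(n,p)}=R^{A_p-A_n}\le R^{k(g,p)}$. For the projection test you use a nonnegative majorant with compactly supported nonnegative Fourier transform and feed it into Lemma~\ref{mass:fourier-average}, with $K_b$ taken from \eqref{mass:conditional-ball} and $K_\omega=R^\sigma$. Both cases give the same exponent $521\sigma$ as the paper.

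There are two harmless slips. First, $W_0$ is the \emph{widest} window, not the narrowest, so $G_0$ is the smallest good set; this is exactly the case you need, and your numerics already use the correct value $W_0(n)=2\cdot2^{-n}R^{500\sigma}$. Second, because the test ball has radius $30\cdot2^{-g}$, your annuli should run over $g-O(1)\le n\le p$ rather than $g\le n\le p$. The extra few annuli cost only a fixed factor, by the Lipschitz bound on $A$.
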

\begin{proof}
For a parallel test index $[g,p]$, let $\mathcal N(w)$ be its full
occupied-center count. The anchor center contributes at most one.
Group other centers by distances comparable to $2^{-h}$, with
$g-O(1)\le h\le p+O(1)$. The lower cutoff follows from the fixed test
ball; the upper cutoff follows from depth-$p$ grid spacing.
A center in such a group can be counted only on two closed arcs
of radius at most $C2^{-(p-h)}R^{500\sigma}$. Their $\xi$ mass is at most
\[
 C2^{-(p-h)}R^{501\sigma}.
\]
If the radius exceeds one use total mass instead. At the small-radius
end it is at least $R^{-1}$ after a common threshold, so
\eqref{mass:direction-regularity} applies. Closed arc endpoints cause
no change to these covering bounds.

By \eqref{mass:count-ball}, the number of centers in the group is at most
$C R^{I(h,p)+2C_{\rm prof}\sigma}$, with depths clamped to $[0,p]$.
For $g\le h\le p$,
$I(h,p)-(p-h)/N=A_p-A_h\le k(g,p)$.
The bounded number of shifted endpoint depths costs only a numerical
factor by the Lipschitz profile bound. Hence
\begin{equation}\label{mass:parallel-average}
 \int\mathcal N\,d\xi
 \le C(N+1)R^{k(g,p)+(501+2C_{\rm prof})\sigma}.
\end{equation}

For a projection test index $[p,t]$, use $\lambda_P$ from
\eqref{mass:conditional-ball} and $H=2^{t-p}$.
The scaled closed collision tolerance at layer zero is
$\epsilon=2H^{-1}R^{500\sigma}$.
There is a nonnegative squared-sinc majorant $h_\epsilon$ of
$\one_{[-\epsilon,\epsilon]}$ whose Fourier transform is nonnegative,
bounded by $C\epsilon$, and supported in
$[-\epsilon^{-1},\epsilon^{-1}]$.
For example use a fixed sufficiently large multiple of
$(\sin(u/(2\epsilon))/(u/(2\epsilon)))^2$, with its continuous value at zero.
Thus Fourier inversion gives
\[
 \iint\one_{\{|w\cdot(z-z')|\le\epsilon\}}\,
       d\lambda_P(z)d\lambda_P(z')
 \le C\epsilon\int_{-H}^{H}|\widehat{\lambda_P}(sw)|^2\,ds,
\]
because $\epsilon^{-1}\le H$. This majorant includes the boundary
and is valid even when $\epsilon$ is large.
Applying Lemma~\ref{mass:fourier-average} with
$K_b=C R^{\delta_A(p,t)+2C_{\rm prof}\sigma}$ and $K_\omega=R^\sigma$,
the average of the exact projection statistic is at most
\begin{equation}\label{mass:projection-average}
 C H^{-1}(1+\log H)
 R^{\delta_A(p,t)+(501+2C_{\rm prof})\sigma}.
\end{equation}
If the good-test threshold is at least one, all directions are good anyway.

Since $C_{\rm prof}=10$, the exponent coefficient in both averages is $521$.
Increase the common threshold so that the fixed constants and the
factors $N+1$ and $1+\log H$ are bounded by $R^{79\sigma}$.
Markov's inequality at the actual good-test thresholds with
$C_{\rm test}\ge1000$ gives bad mass at most
$R^{-400\sigma}\le R^{-\sigma}$.
This proves \eqref{mass:bad-direction} uniformly over every anchor
and test index. Neither the proof nor its threshold uses smooth
derivatives or an anchor's actual mass.
\end{proof}

Fix the cutoffs exactly by Lemma~\ref{def:masks}, using its single kernel and
the distance thickening of possibly nonclosed good sets.
The preceding proposition used the paper's unchanged closed test windows;
it required no assertion that those good sets are closed.
The cutoff is still exactly one on its own zero-level good set.

\subsection{The retained distance measure}
For each ordered pair of retained classes $(C_1,C_2)$ define
\begin{equation}\label{mass:entrance}
 c=c(C_1,C_2)=
 \max\{n\in\{0,\ldots,N\}:\min(A_{1,C_1,n},A_{2,C_2,n})\le\beta\}.
\end{equation}
For sufficiently large common $N$ this exists, and
\begin{equation}\label{mass:entrance-range}
 \lfloor\beta N/4\rfloor\le c<N/4,\qquad c>0.
\end{equation}
Indeed $A_{j,0}=0$, the profile bound gives
$A_{j,n}\le2n/N$, and \eqref{mass:profile-lower} gives
$A_{j,n}\ge\chi n/N-12\sigma>\beta$ when $n\ge N/4$.
The first inequality follows by testing $n=\lfloor\beta N/4\rfloor$.

Let the two root lists be
\begin{equation}\label{mass:root-lists}
 \mathcal A_j(C_1,C_2)
 =\{(\mathrm{par},0,c)\}\cup\mathcal M_{\rho_{j,C_j}}([c,N]).
\end{equation}
The entrance test index uses the very same test and cutoff construction;
its anchor need not be a block representative. All other anchors are
strictly larger than $c$. The already proved list bounds give
\[
 \#\mathcal A_j\le D_N,\qquad
 \max_{\mathfrak t\in\mathcal A_j}\ell(\mathfrak t)
 \le\max(c,(N-c)/2)\le N/2.
\]
Each cutoff is keyed by its class, test index, and level. A test index
occurring for two different opposite classes uses the identical cutoff.
For brevity put
\[
 m_{j,C_1,C_2}(z,w)
 =\prod_{\mathfrak t\in\mathcal A_j(C_1,C_2)}
       w_{\rho_{j,C_j},0,\mathfrak t}(z,w).
\]
Define the single Borel function
\begin{equation}\label{mass:theta}
 \Theta_N(x,y)=
 \sum_{C_1,C_2}\one_{C_1}(x)\one_{C_2}(y)
 m_{1,C_1,C_2}(x,e(x,y))m_{2,C_1,C_2}(y,e(x,y)).
\end{equation}
The classes are disjoint and cutoffs lie in $[0,1]$, so $0\le\Theta_N\le1$.
There is no direction filter or other extra factor in this definition.
Set
\begin{equation}\label{mass:measures}
 W(x,y)=|x-y|^{-1/2},\qquad
 \alpha=\dist_\#(W\,\mu_1\otimes\mu_2),\qquad
 \tau_N=\dist_\#(W\Theta_N\,\mu_1\otimes\mu_2).
\end{equation}
The measure $\alpha$ does not depend on $N$.

\begin{lemma}
\label{mass:filter}
There is a Borel pair set $\Gamma_N\subset K_1\times K_2$ such that
\[
 (\mu_1\otimes\mu_2)(\Gamma_N^c)
 \le2M_*R^{-\sigma(p_{\rm rad}-1)/2},
\]
and, for almost every endpoint, the direction pushforward of its
opposite $\Gamma_N$ fibre is a subprobability satisfying
\eqref{mass:direction-regularity}. No fibre is renormalized.
\end{lemma}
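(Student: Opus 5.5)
We need a pair set $\Gamma_N$ with small complement such that for a.e. endpoint the direction pushforward of the opposite fibre is $R^\sigma$-regular at scales $R^{-1}\le r\le1$. Write $p=p_{\rm rad}$. The plan is to use the joint densities $F_{1,x}$, $F_{2,y}$ from \eqref{mass:radial-lp}. These are densities in $w$ of $(e(x,\cdot))_\#\mu_2$ and $(e(\cdot,y))_\#\mu_1$ respectively, with $e(x,y)$ used on both sides after composing with $w\mapsto-w$ if needed.

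Set $\lambda=R^{\sigma/2}$ and define
\[
\Gamma_N=\{(x,y)\in K_1\times K_2:\ F_{1,x}(e(x,y))\le\lambda,\ F_{2,y}(e(x,y))\le\lambda\}.
\]
This set is Borel because the densities are jointly Borel and $e$ is continuous on the separated product.

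For the mass bound, I disintegrate in $x$. The pair $(x,y)$ fails the first condition exactly when $e(x,y)$ lies in the set where $F_{1,x}>\lambda$. Its $\mu_2$-mass is therefore
\[
\int_{\{F_{1,x}>\lambda\}}F_{1,x}\,dw\le\lambda^{1-p}\int F_{1,x}^{p}\,dw .
\]
Integrating in $d\mu_1(x)$ and using \eqref{mass:radial-lp} gives at most $M_*\lambda^{-(p-1)}=M_*R^{-\sigma(p-1)/2}$. The symmetric argument handles the second condition, and summing the two yields the stated bound $2M_*R^{-\sigma(p-1)/2}$. Here I need the identity $\mu_2(\{y: e(x,y)\in S\})=\int_S F_{1,x}\,dw$, which holds for every $x$ at which $F_{1,x}$ is the density. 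By the assumption it is the density outside a $\mu_1$-null set of $x$, and off that set we modify $\Gamma_N$ on a null set.

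For regularity, fix an endpoint $x$ outside the null set. The fibre $\Gamma_N^x=\{y:(x,y)\in\Gamma_N\}$ is contained in $\{y:F_{1,x}(e(x,y))\le\lambda\}$. Hence the direction pushforward $\xi_x=(e(x,\cdot))_\#(\mu_2|_{\Gamma_N^x})$ is dominated by $\min(F_{1,x},\lambda)\,dw$, and in particular has density at most $\lambda$. Any arc of radius $r$ then has $\xi_x$-mass at most $2r\lambda=2R^{\sigma/2}r\le R^\sigma r$ once $R^{\sigma/2}\ge2$, at a common threshold for $N$. Its total mass is at most $\mu_2(K_2)=1$, so it is a subprobability and no renormalization occurs. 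The case of endpoints $y$ is symmetric.

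The only delicate point is bookkeeping rather than analysis. We need one pair set $\Gamma_N$ that serves both sides simultaneously, while the density identity holds only outside endpoint null sets and the fibre bound must hold for every $x$ off one null set. Intersecting the two conditions handles the simultaneity, since the fibre of an intersection is smaller and domination survives. The exceptional null sets are absorbed by the phrase ``for almost every endpoint''. I would also check that the sign convention of the second density matches $e(x,y)$, as the paper indicates.
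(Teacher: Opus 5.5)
Your proposal is correct and follows the paper's proof essentially step for step. Both threshold the two radial densities at $R^{\sigma/2}$, bound each failure set by $\int_{\{F>\lambda\}}F\,dw\le\lambda^{1-p}\int F^{p}\,dw$, and dominate each surviving fibre's direction measure by $\lambda\,dw$.

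One small point: the bound $2r\lambda$ for an arc of radius $r$ is exact only for geodesic balls. For chordal balls the arclength is $4\arcsin(r/2)\le 2\pi r/3$. The paper covers either convention with a numerical constant $C$ absorbed at the common threshold, and your threshold $R^{\sigma/2}\ge 2$ needs only the same trivial adjustment.
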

\begin{proof}
Put $T_N=R^{\sigma/2}$ and retain pairs for which both
$F_{1,x}(e(x,y))\le T_N$ and $F_{2,y}(e(x,y))\le T_N$.
The first failed inequality has pair mass
\[
 \int_{K_1}\int_{\{F_{1,x}>T_N\}}F_{1,x}(w)\,dw\,d\mu_1(x)
 \le T_N^{1-p_{\rm rad}}
       \int_{K_1}\int F_{1,x}^{p_{\rm rad}}\,dw\,d\mu_1(x)
 \le M_*T_N^{1-p_{\rm rad}}.
\]
The same argument handles the second inequality. On a surviving fibre
the direction measure is dominated by $T_N\,dw$; imposing the other
endpoint condition can only decrease it. A circle ball of radius
$r\le1$ has arclength at most $Cr$, for a numerical $C$ under either
the chordal or geodesic convention. At the common threshold
$C\le R^{\sigma/2}$, its mass is at most $R^\sigma r$.
\end{proof}

\begin{theorem}
\label{mass:recovery}
For the specific measures \eqref{mass:measures}, there are common
constants $C_e,N_e$ depending only on the displayed fixed prepared
data, $\chi,\kappa,\sigma$, and the fixed cutoff construction such that
for all $N\ge N_e$,
\begin{equation}\label{mass:recovery-bound}
\begin{gathered}
 0\le\tau_N\le\alpha,\qquad
 d_*^{-1/2}\le\alpha(\mathbb R)\le a_*^{-1/2},\\
 \|\alpha-\tau_N\|_{\rm TV}\le C_e2^{-a_eN},\\
 a_e=\min\{\sigma/2,\sigma(p_{\rm rad}-1)/2\}>0.
\end{gathered}
\end{equation}
One may take $C_e=(6+2M_*)a_*^{-1/2}$ after enlarging the common threshold.
\end{theorem}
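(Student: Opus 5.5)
The plan is to read every assertion off the single identity
\[
 \alpha-\tau_N=\dist_\#\bigl(W(1-\Theta_N)\,\mu_1\otimes\mu_2\bigr).
\]

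\emph{Positivity, domination and total mass.} The weight $W=|x-y|^{-1/2}$ is positive and $0\le\Theta_N\le1$. Hence $0\le W\Theta_N\le W$, and pushing forward by $\dist$ gives $0\le\tau_N\le\alpha$. On $K_1\times K_2$ we have $a_*\le|x-y|\le d_*$ by \eqref{mass:prepared}, so $d_*^{-1/2}\le W\le a_*^{-1/2}$. Since $\mu_1\otimes\mu_2$ is a probability, integrating gives $d_*^{-1/2}\le\alpha(\mathbb R)\le a_*^{-1/2}$. Because $\alpha-\tau_N\ge0$, its total variation is its mass, so
\[
 \|\alpha-\tau_N\|_{\rm TV}
 \le a_*^{-1/2}\int(1-\Theta_N)\,d(\mu_1\otimes\mu_2).
\]
Everything therefore reduces to bounding the $\mu_1\otimes\mu_2$-mass of the pairs where $\Theta_N<1$.

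\emph{Splitting the bad pairs.} I split these pairs into three families.

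\begin{enumerate}
\item Pairs with $x$ or $y$ outside the retained classes. By \eqref{mass:class-masses} this family has mass at most $2(R^{-2}+Q_NR^{-\sigma})$.
\item Pairs outside the set $\Gamma_N$ of Lemma~\ref{mass:filter}. This family has mass at most $2M_*R^{-\sigma(p_{\rm rad}-1)/2}$.
\item Pairs $(x,y)\in\Gamma_N\cap(C_1\times C_2)$ for retained classes at which some factor $w_{\rho_{j,C_j},0,\mathfrak t}$ of $\Theta_N$ is $<1$.
\end{enumerate}

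For the third family, Lemma~\ref{def:masks} says each cutoff equals one on $G_{0,\mathfrak t}$. So in this case $e(x,y)\notin G_{0,\mathfrak t}(x)$ for some $\mathfrak t\in\mathcal A_1(C_1,C_2)$, or the analogous statement holds at $y$.

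\emph{Main obstacle: the lists depend on the opposite class.} The list $\mathcal A_1(C_1,C_2)$ depends on the opposite class $C_2$, and there are up to $Q_N$ such classes. Applying Proposition~\ref{mass:per-test} separately to each fibre piece over $C_2$ would cost a factor $Q_N$, which is too large. The way around this is that $\mathcal A_1(C_1,C_2)$ depends on $C_2$ only through the entrance depth $c\in\{0,\dots,N\}$ from \eqref{mass:entrance}. Hence for fixed $x\in C_1$ the union over all $C_2$ involves at most $(N+1)D_N$ test indices, all anchored at cells containing $x$ for the single class $\rho_{1,C_1}$.

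Fix such an $x$ outside the null set of Lemma~\ref{mass:filter}. Let $\xi_x$ be the direction pushforward of the whole $\Gamma_N$ fibre over $x$. By that lemma, $\xi_x$ is a subprobability satisfying \eqref{mass:direction-regularity}. Proposition~\ref{mass:per-test} then bounds the $\xi_x$-mass of directions bad for any one of these tests by $R^{-\sigma}$. Summing over the tests and integrating in $x$ bounds the $x$-side bad mass by $(N+1)D_NR^{-\sigma}$. The $y$-side is symmetric, using the reversed density and the same $e(x,y)$ convention. Tonelli's theorem and the joint Borel measurability of $\Theta_N$ and $\Gamma_N$ justify the fibrewise computation.

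\emph{Collecting the bounds.} Adding the three families gives
\[
 \int(1-\Theta_N)\,d(\mu_1\otimes\mu_2)
 \le 2R^{-2}+2Q_NR^{-\sigma}+2(N+1)D_NR^{-\sigma}
   +2M_*R^{-\sigma(p_{\rm rad}-1)/2}.
\]
Since $Q_N=N^{20/\sigma}$ and $D_N$ is linear in $N$, both $Q_N$ and $(N+1)D_N$ are subexponential in $N$. After enlarging the common threshold $N_e$, each of the first three terms is therefore at most $2R^{-\sigma/2}$, giving $6R^{-\sigma/2}$ in total. With $R=2^N$ and $a_e=\min\{\sigma/2,\sigma(p_{\rm rad}-1)/2\}$, this yields
\[
 \|\alpha-\tau_N\|_{\rm TV}\le(6+2M_*)a_*^{-1/2}\,2^{-a_eN}.
\]
Every threshold used came from Lemma~\ref{mass:classes}, Proposition~\ref{mass:per-test}, Lemma~\ref{mass:filter}, and this absorption step. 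All of them depend only on the fixed prepared data and on $\chi,\kappa,\sigma$, so the constants are common to all prepared pairs.
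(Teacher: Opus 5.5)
Your argument is correct and is essentially the paper's proof. It uses the same positive identity for $\alpha-\tau_N$ and the same union bound over class discard, $\Gamma_N^c$, and per-test failures, which are estimated by Proposition~\ref{mass:per-test} on the $\Gamma_N$ fibre; the same absorption then gives $C_e=(6+2M_*)a_*^{-1/2}$.

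The only difference is bookkeeping in the third family. You take the union of the lists over the entrance depth $c$ and get $(N+1)D_NR^{-\sigma}$, whereas the paper sums over the at most $Q_N$ opposite classes and gets $Q_ND_NR^{-\sigma}$. Contrary to your remark, that $Q_N$ factor is not too large. Since $Q_N=N^{20/\sigma}$ is polynomial in $N$, it is absorbed by $Q_N(1+D_N)\le R^{\sigma/2}$, exactly as your own $2Q_NR^{-\sigma}$ term is.
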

\begin{proof}
Fix an ordered retained class pair, a first-endpoint test index from
its root list, and $x\in C_1$ outside the endpoint null set.
Apply Proposition~\ref{mass:per-test} to the direction pushforward
of the original $\mu_2$ fibre of $\Gamma_N$.
The point $x$ and the test index determine one occupied anchor cell
of the normalized class $\rho_{1,C_1}$. Restricting the opposite fibre
further to $C_2$ only decreases its mass.
Thus the original pair mass of this test failing on
$(C_1\times C_2)\cap\Gamma_N$ is at most $a_{1,C_1}R^{-\sigma}$.

There are at most $D_N$ test indices for a fixed class pair.
Sum first over $C_1$, using $\sum a_{1,C_1}\le1$, and then over the
at most $Q_N$ opposite classes. The total first-endpoint failure cost
is at most $Q_ND_NR^{-\sigma}$. Interchanging the endpoints gives
the same bound. No anchor-cell count is introduced: each fixed point
and test index have already selected their unique anchor.
On a retained class pair where all zero-level tests succeed,
every cutoff equals one, so $\Theta_N=1$.
Class discard, these two failure estimates, and Lemma~\ref{mass:filter}
therefore give the explicit estimate
\begin{equation}\label{mass:raw-budget}
 \iint(1-\Theta_N)\,d\mu_1d\mu_2
 \le2R^{-2}+2Q_NR^{-\sigma}
       +2Q_ND_NR^{-\sigma}
       +2M_*R^{-\sigma(p_{\rm rad}-1)/2}.
\end{equation}
This is a union-bound estimate, so double counting only improves its validity.

Choose the common threshold also so that
$Q_N(1+D_N)\le R^{\sigma/2}$.
The left side is one fixed polynomial in $N$; hence such a threshold
exists uniformly. The first three terms of \eqref{mass:raw-budget}
are then at most $4R^{-\sigma/2}$.
Since $0\le\Theta_N\le1$, the difference $\alpha-\tau_N$ is the
pushforward of a positive measure, and consequently
\begin{equation}\label{mass:exact-tv}
 \|\alpha-\tau_N\|_{\rm TV}
 =\iint W(x,y)(1-\Theta_N(x,y))\,d\mu_1(x)d\mu_2(y).
\end{equation}
Use $W\le a_*^{-1/2}$ to obtain the stated error.
The measure ordering and the two mass bounds follow immediately
from \eqref{mass:prepared} and the probability normalization.
\end{proof}

\section{Fourier decay of the retained distance measures}
\label{root:section}
\label{sec:analytic-root}

This section uses the very same function $\Theta_N$ and positive measure
$\tau_N$ defined in \eqref{mass:theta}--\eqref{mass:measures}. In particular,
the auxiliary restriction of a radial density used in the mass estimate is
not a multiplier in $\Theta_N$. No spatial partition of that restriction is
needed for the frequency argument.

\subsection{The quantities used in the estimates}
The profile, test, cutoff and list conventions are those of
Section~\ref{sec:profiles}. The endpoint symbol family and angular/positive
states are defined in Appendix~\ref{sec:states-transitions}, especially
\eqref{sym:family}, \eqref{state:H}, \eqref{state:D} and \eqref{state:base}.
The potential and finite parameter order are given in
Appendix~\ref{sec:iteration}.

The following estimates for a single profile are the forms used in the recursion.
The initial estimate for the distance measure uses the full two-profile,
actual-support version of Proposition~\ref{compat:inverse} and the
two-profile graph bound in Proposition~\ref{compat:graph}.
Their hypotheses and proofs are given in
Appendices~\ref{inv:proved-section} and~\ref{graph:section}.
In particular, the enclosing root cells may coincide while their actual
prepared supports are separated.

Local symbol compatibility follows directly from those two analytic
propositions and the deletion lemma. When
Proposition~\ref{compat:joint} is cited for this form of the estimate, only its local
compatibility assertions are being used. Its statement about
the subsequent recursive estimates and iteration is recorded after their proofs in
Section~\ref{compat:joint-section}; it is not a premise establishing them.

\subsection{Two uniform analytic estimates}
In the sequel $\poly$ denotes a factor $C(1+N)^P$, where $C,P$ depend only
on parameters and order bounds chosen before $N$, and may change between
uses. Coefficients $C\sigma$ in principal exponents have only structural
dependence on $C_{\rm prof},C_{\rm test}$ and the fixed geometry. They cannot
depend on subsequently chosen derivative orders.

\begin{proposition}\label{in:shell}
Fix $T>0$, integer depths $0\le a\le v\le N$, a level $0\le i<J_0$,
separated depth-$a$ cells, and test index lists with
$\#\mathcal A\le N^3$ and maximal length $L$. Assume
$v-a\ge20\sigma N$ and $L\le(v-a)/2+\sigma N$.
In $\HH_F$ choose a real smooth cutoff $0\le\psi\le1$ equal to one on a
fixed neighborhood of all cross-directions $(x-y)/|x-y|$ and zero on a fixed
neighborhood of their antipodes, with common margins and derivative bounds.
The separated geometry allows such a cutoff. It is outside the endpoint
symbols. This normalization is essential: the conversion claim would be
false for the otherwise permitted choice $\psi\equiv0$.
Let
\[
\zeta=\dist_\#\left[M(a)^{-2}(2^a|x-y|)^{-1/2}
\prod_{\mathfrak t\in\mathcal A}w_{i,\mathfrak t}(x,e)w_{i,\mathfrak t}(y,e)
\,d\rho|_X(x)d\rho|_Y(y)\right].
\]
There are common constants $C_T,N_T$, depending only on the previously fixed
structural, geometric and cutoff data, $\kappa,\sigma$ and $T$, such that for
all $N\ge N_T$, uniformly over the admissible data,
\begin{equation}\label{in:shell-bound}
2^{-a}\int_{2^{v-1}}^{2^{v+1}}|\widehat\zeta(r)|^2dr
\le C_T\sup\HH_F^{X,Y}(a,v)+C_TR^{-T}.
\end{equation}
The output symbols have the same numbered cutoffs and level, spatial multipliers
bounded by one, no radial dependence, and total derivative bound at most
$B_{\rm inv}(T)$ independent of $N$ and the input measure. All their required
finite numerical derivative bounds are common to the entire family.
\end{proposition}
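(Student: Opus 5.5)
We would prove \eqref{in:shell-bound} by expanding the shell energy as a double integral over pairs of pairs, converting the radial oscillation into an angular integral by stationary phase, and recognizing the result as an $\HH_F$ state. First we localize the shell. With a smooth nonnegative bump $\varphi_v$ equal to one on $[2^{v-1},2^{v+1}]$ and supported in $[2^{v-2},2^{v+2}]$, we bound
\[
2^{-a}\int_{2^{v-1}}^{2^{v+1}}|\widehat\zeta(r)|^2\,dr
\le2^{-a}\int\varphi_v(r)|\widehat\zeta(r)|^2\,dr .
\]
Expanding $|\widehat\zeta|^2$ produces the quadruple integral of
\[
M(a)^{-4}\,2^{-a}\,(2^a|x-y|)^{-1/2}(2^a|x'-y'|)^{-1/2}\,
\widehat{\varphi_v}\bigl(|x'-y'|-|x-y|\bigr)
\]
against the product cutoffs and $d\rho|_Xd\rho|_Yd\rho|_Xd\rho|_Y$. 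Since the cutoffs are bounded by one and the measures are positive, all pairs with $\bigl||x-y|-|x'-y'|\bigr|\ge2^{-v}R^{\sigma}$ contribute $O(R^{-T})$ by the rapid decay of $\widehat{\varphi_v}$, after a fixed normalizing count of the $M(a)$ powers. This is where the term $C_TR^{-T}$ in \eqref{in:shell-bound} comes from.

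Next we rewrite $e^{-ir|x-y|}$ through the standard circular identity $e^{-ir|z|}=c\,(r|z|)^{1/2}\int_{S^1}e^{-irz\cdot\omega}\psi(\omega)\,a(r|z|,\omega)\,d\omega+O((r|z|)^{-K})$. Here the cutoff $\psi$ localizes to the stationary point $\omega=e(x,y)$ and removes the antipodal point. The separation of the depth-$a$ cells gives $r|z|\gtrsim2^{v-a}\ge R^{20\sigma}$, so the error is $O(R^{-T})$ once $K$ is chosen depending on $T$. The factor $(2^a|x-y|)^{-1/2}$ cancels the stationary-phase amplitude up to the normalization $2^{-a}$. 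This is the reason for the weight $W$ and for the hypothesis $\psi=1$ near the cross-directions: with $\psi\equiv0$ the leading term would vanish, and the conversion claim would fail.

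The endpoint cutoffs $w_{i,\mathfrak t}(x,e(x,y))$ are then Taylor-expanded in $e(x,y)-\omega$ on the stationary-phase scale $(2^{v}|x-y|)^{-1/2}$. The derivative bounds of Lemma~\ref{def:masks} are $D_\ell^j$, and the hypotheses $L\le(v-a)/2+\sigma N$ and $\#\mathcal A\le N^3$ keep $D_L\cdot(\text{stationary scale})\le R^{O(\sigma)}2^{-(v-a)/2+L}$ under control, so only boundedly many Taylor terms survive. Each term is a symbol in $\omega$ with spatial multipliers bounded by one, no radial dependence, and derivative bounds depending only on $T$; this is $B_{\rm inv}(T)$. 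Integrating in $r$ against $\varphi_v$ then gives exactly the angular energy $\int|\int e^{-i r x\cdot\omega}\,\cdots|^2$, or the positive near-equal-distance count. We take the supremum over the finitely many symbols to reach $\sup\HH_F^{X,Y}(a,v)$.

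The main obstacle is the symbol bookkeeping. The $N^3$-fold product of cutoffs must be differentiated while keeping the derivative budget independent of $N$. We would handle this by factoring the product: derivatives of order $j$ fall on at most $j$ factors, giving $\poly$ counts times $D_L^j$, and the $\poly$ losses are absorbed by the $R^{-\sigma}$ margin from $v-a\ge20\sigma N$. We must also verify that the output symbols keep the same numbered level $i$ rather than $i+1$, so that the recursion in Appendix~\ref{sec:iteration} can consume them.
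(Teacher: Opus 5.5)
\textbf{There is a genuine gap: your route never produces an admissible $\HH_F$ value.} The broad mechanism is right: trade $e^{-ir|x-y|}$ for an angular integral localized by $\psi$, let $(2^a|x-y|)^{-1/2}$ cancel the factor $\Lambda^{1/2}$, and keep the expansion finite by a scale gain. The problem is the form of what comes out.

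$\HH_F$ is the integral of $|\int\psi\,U_X\overline{V_Y}\,dw|^2$. Here $U_X$ depends on $x$ only through $e^{irw\cdot x}b_x(w)$, with $b_x$ in \eqref{sym:family}: a bounded function of $x$ times normalized derivatives of the original cutoffs, evaluated at the integration angle, with no $r$-dependence.
\begin{itemize}
\item Your amplitude $a(r|x-y|,\omega)$ depends on $r$ and on $|x-y|$.
\item Taylor expanding $w_{i,\mathfrak t}(x,e(x,y))$ about $\omega$ leaves factors $(e(x,y)-\omega)^k$.
\item Neither is a function of one endpoint, so the resulting integrals are not of the form $\int\psi U_X\overline{V_Y}\,dw$.
\item The Taylor remainder has size $(A|e-\omega|)^K$ with $A\approx R^\sigma D_{\max}$. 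It is small only when $|\omega-e|\ll A^{-1}$, but $\psi$ has support of fixed size. Controlling it would need a cutoff around $e(x,y)$, which again is not separable.
\end{itemize}
So ``each term is a symbol in $\omega$ with spatial multipliers bounded by one'' is asserted, not derived, and that assertion is the content of the proposition.

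Your scale estimate $D_L(2^v|x-y|)^{-1/2}\le R^{O(\sigma)}2^{L-(v-a)/2}$ is also only $R^{O(\sigma)}$, with no gain. The real gain $A^2/\Lambda\lesssim R^{-18\sigma}$ uses the $R^{-50\sigma}$ built into $D_\ell$ together with $2^{-(v-a)}\le R^{-20\sigma}$.

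\textbf{The missing idea is an exact finite inversion of the stationary-phase expansion by constant-coefficient circle operators.}
\begin{itemize}
\item The forward expansion $I_\Lambda(b;e)=\Lambda^{-1/2}[e^{i\Lambda}\sum_{j<m}\Lambda^{-j}(L_jb)(e)+\mathcal R_m]$ holds with the explicit operators \eqref{inv:Lj}.
\item Set $B_0=L_0^{-1}$ and $B_j=-L_0^{-1}\sum_{h=1}^{j}L_hB_{j-h}$, and apply the forward expansion to each $B_jb$. This gives \eqref{inv:finite-inverse}: $e^{i\Lambda}b(e)=\Lambda^{1/2}\sum_{j<J}\Lambda^{-j}I_\Lambda(B_jb;e)+O(A^4(A^2/\Lambda)^J)$.
\item Because $B_j$ has constant coefficients, Leibniz splits $B_j(m_xn_y)$ into products of normalized differentiated cutoffs at each endpoint. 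The scale-weighted coefficient mass is $(\sum_tD_{\ell_t})^d\le A^d$.
\item Together with $(2^{-q}A^2)^j\le1$, this gives an $N$-independent $\ell^1$ bound on the coefficients. That bound, not ``finitely many symbols,'' is what the supremum step needs.
\item The identity $(2^a|x-y|)^{-1/2}(r|x-y|)^{1/2-j}=2^{q/2}s^{1/2-j}2^{-jq}(2^a|x-y|)^{-j}$ leaves a factor $(2^a|x-y|)^{-j}$. It must still be separated as $\sum_\ell\gamma_\ell F_\ell(x)\overline{G_\ell(y)}$ with $\sum_\ell|\gamma_\ell|\le C_j$, via the torus Fourier series \eqref{inv:spatial}.
\item Only then do you get a linear representation $\widehat\zeta(-2^vs)=2^{q/2}\sum_\alpha c_\alpha(s)A_\alpha(s)+E$. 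Minkowski in $L^2([1/2,2])$ and the definition of $\HH_F$ give \eqref{in:shell-bound}; the positive band follows by conjugation.
\end{itemize}

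\textbf{Your preliminary squaring and truncation work against this.} After restricting to $\bigl||x-y|-|x'-y'|\bigr|<2^{-v}R^\sigma$, the quadruple integral is no longer the square of a linear expression. It cannot be recognized as $\HH_F$ unless the discarded part is added back. In the paper the $C_TR^{-T}$ term comes from the pointwise inversion remainder $O(R^{-P})$, integrated against total normalized mass $O(R^{2\mu})$, not from a truncation.
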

\begin{proof}
Proposition~\ref{compat:inverse} proves this statement, including a more
general two-profile and complex-input form. Its scalar coefficient sums
have a uniform constant bound; no list-size polynomial is required.
\end{proof}

\begin{proposition}\label{in:graph}
Let $0\le a<p\le v\le N$. Suppose both endpoint symbols retain a parallel
test index $[g,p]$ and hence its good-test support, and
\begin{gather*}
a\le g<p,\quad g\le a+2\sigma N,\quad
p-a\le v-p+2\sigma N,\\
L\le v-p+2\sigma N,\qquad v-p\ge10\sigma N.
\end{gather*}
For every fixed order bound $B$ and $T>0$, there are common prefactors and a
common threshold, depending only on the fixed data, $B,T$, such that for
all $N$ above that threshold and uniformly over the admissible symbols,
\begin{equation}\label{in:graph-bound}
\HH^{X,Y}(a,v)\le
C_{T,B} R^{-2(A_p-A_a)+k(a,p)+C_G\sigma}
\max_{P\subset X,Q\subset Y}\HH^{P,Q}(p,v)+C_TR^{-T}.
\end{equation}
Children carry the unchanged symbols restricted to depth-$p$ cells, including
the reached factors. No separation of parent or child cells is required.
The graph estimate applies to supports at every available good-test level.
One may take $C_G=C_{\rm test}+100+4C_{\rm prof}$. Derivative orders
affect fixed prefactors and thresholds, not $C_G$.
\end{proposition}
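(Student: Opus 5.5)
The plan is to treat $\HH^{X,Y}(a,v)$ as a quadratic form in the endpoint data and to localize it from depth $a$ to depth $p$ by a Schur-type graph argument. First I will partition $X$ and $Y$ into their occupied depth-$p$ children $P\subset X$, $Q\subset Y$, writing $\rho|_X=\sum_P\rho|_P$ and similarly for $Y$. Each child carries the unchanged restricted symbols, including the parallel factor $w_{i,[g,p]}$. Expanding the state then gives a double sum over pairs of child pairs $(P,Q)$ and $(P',Q')$. The diagonal-type terms are renormalized from the model mass $M(a)$ to $M(p)$ using \eqref{def:identity}. This conversion produces exactly the factor $2^{2(p-a)}(M(p)/M(a))^2=R^{-2(A_p-A_a)}$, up to the $R^{O(C_{\rm prof}\sigma)}$ errors from \eqref{def:profile}, and it is the source of the first term in the exponent of \eqref{in:graph-bound}.

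Next I will show that an off-diagonal interaction between two child configurations is negligible unless they are aligned. Concretely, the relevant child centers must lie within distance $W_i(p)\approx2^{-p}R^{500\sigma}$ of a common line $c_P+\R w$, with $w$ in the support of the directional symbols. The tool is repeated integration by parts in the distance phase at frequency $\sim2^v$, applied after stationary phase has reduced the state to an angular integral. Each derivative falling on a cutoff costs at most $D_L\le2^{L}R^{-50\sigma}$ (Lemma~\ref{def:masks}). The hypothesis $L\le v-p+2\sigma N$ keeps this below the frequency gain, and $p-a\le v-p+2\sigma N$ keeps the depth-$p$ cells inside the depth-$a$ ball $B(c_P,30\,2^{-g})$ at the resolution $2^{-v}$ wavepacket scale. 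Since $v-p\ge10\sigma N$, a sufficiently large fixed number of integrations by parts (chosen in terms of $T$ and $B$) yields a tail $C_TR^{-T}$ for all non-aligned pairs.

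For the aligned pairs I will apply Cauchy--Schwarz (Schur's test) on the incidence graph. This bounds the aligned sum by $\max\deg\cdot\max_{P,Q}\HH^{P,Q}(p,v)$, separately on each endpoint. The degree is controlled by the retained parallel test $[g,p]$: on the support of $w_{i,[g,p]}\subset G_{i+1,[g,p]}$, the number of occupied depth-$p$ centers in the tube is at most $R^{k(g,p)+C_{\rm test}\sigma}$. Since $a\le g\le a+2\sigma N$, the Lipschitz bound $|A_n-A_{n'}|\le2|n-n'|/N$ gives $k(g,p)\le k(a,p)+O(\sigma)$. Collecting the profile rounding errors $4C_{\rm prof}\sigma$, the test constant $C_{\rm test}\sigma$, and the window losses then yields $C_G=C_{\rm test}+100+4C_{\rm prof}$.

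The main obstacle I expect is the Schur step with two distinct endpoint symbols: the aligned graph must be controlled using the parallel test on each side separately, without needing separation of children. I would first try weighting the Cauchy--Schwarz by $\HH^{P,Q}$ itself, so that each side's count involves only that side's profile. I would also check that $\HH^{P,Q}(p,v)$ is nonnegative, which I am assuming from its definition as a positive state in Appendix~\ref{sec:states-transitions}; the weighted Cauchy--Schwarz needs this in order to absorb cross terms.
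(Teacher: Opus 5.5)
You have a genuine gap at the point that should make $k(a,p)$ appear only once. Your integration by parts extracts only the perpendicular alignment (child centers near a line in direction $w$), and you then run Schur ``separately on each endpoint''. The parallel test bounds the candidate cells of \emph{one} endpoint in \emph{one} tube by $R^{k(a,p)+O(\sigma)}$. Applied separately to $\int|U_X|^2\,dw$ and $\int|V_Y|^2\,dw$, the two degrees multiply. With the correct normalization $2^{2(p-a)}(M(p)/M(a))^4R^{2I(a,p)+4C_{\rm prof}\sigma}=R^{-2(A_p-A_a)+4C_{\rm prof}\sigma}$, you end with $R^{-2(A_p-A_a)+2k(a,p)+O(\sigma)}$. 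That is weaker than \eqref{in:graph-bound} and is not paid for by the potential increase in Lemma~\ref{budget:payment}.

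The missing idea is that the two endpoints are coupled through the shared radial integral. The paper expands $|U_X|^2|V_Y|^2$ into a four-label sum with rows $(P,Q)$ and columns $(P',Q')$. It inserts smooth caps of size $\delta=\min(c_0,2^{-(p-a)}R^{10\sigma})$ in each of the two independent angles. A whole term is kept only if $|(w_1^0)^\perp\cdot(c_P-c_{P'})|\le\tau$, $|(w_2^0)^\perp\cdot(c_Q-c_{Q'})|\le\tau$ \emph{and} $|w_1^0\cdot(c_P-c_{P'})+w_2^0\cdot(c_Q-c_{Q'})|\le\tau$, with $\tau=2^{-p}R^{30\sigma}$. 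The last condition comes from integrating by parts in $s=r/2^v$; the first two come from integrating by parts in one angle. For a fixed row, once one column cell has been counted by its parallel test, the scalar and perpendicular conditions confine the other column center to a ball of radius $C\tau$. That ball holds only $O(R^{60\sigma})$ depth-$p$ centers, so the joint degree is $R^{\min_jk_j(a,p)+(C_{\rm test}+64)\sigma}$. The hypothesis $p-a\le v-p+2\sigma N$ is what controls $\delta^{-1}$ and the ratio $2^{-a}/\tau$ in that angular integration by parts; it has nothing to do with containment in the test ball. No stationary phase is involved either, since $\HH$ is already an angular energy.

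Two further steps need repair. The parallel test at a row cell only counts centers in $B(c_P,30\,2^{-g})$, while candidate column cells range over the whole depth-$a$ cell, so testing at the fixed row cell does not bound them. The paper groups candidates by depth-$g$ ancestors, giving at most $2^{2(g-a)}\le R^{4\sigma}$ groups; this is where $g\le a+2\sigma N$ enters. Note that $k(g,p)\le k(a,p)$ holds exactly because $[g,p]\subseteq[a,p]$, so no Lipschitz argument is needed there. In each group the test is applied at a witness point of an \emph{eligible} candidate cell. This requires an alignment tolerance well below the test width $W_{i+1}(p)$, so that the whole group fits in the witness's window; your tolerance $W_i(p)$ is too large for that.

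Finally, your normalization bookkeeping is inconsistent. Splitting two squared transforms produces $(M(p)/M(a))^4$, not $(M(p)/M(a))^2$. Schur yields $\max\deg\cdot\sum_{P,Q}\HH^{P,Q}(p,v)$, which becomes a maximum only after paying the child-pair count $R^{2I(a,p)+4C_{\rm prof}\sigma}$. These two omissions cancel, which is why your final exponent looks right.
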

\begin{proof}
Proposition~\ref{compat:graph} proves the stronger two-profile sum and
maximum estimates. The one-profile specialization is
Corollary~\ref{graph:compatibility}. Proposition~\ref{compat:joint}
verifies the common symbol family and the order of the parameter choices.
\end{proof}

\subsection{Recursive estimates}

\begin{proposition}\label{trans:closure}
For a nonterminal state satisfying the stated profile and symbol hypotheses,
with the required levels $i,i+1$ available, the following are all the child
types. Each non-low child has scalar factor
\begin{equation}\label{trans:factor}
\poly R^{-2(A_{a'}-A_a)+\mathrm{cost}+C\sigma},
\end{equation}
where $a'$ is the child's start. A low child has entire factor $\poly$,
with no $C\sigma$ loss. For any fixed $T$, oscillatory errors can be made
$O(R^{-T})$ after each local normalization and sum.
\begin{center}\small
\begin{tabular}{@{}lll@{}}\toprule
Step & Parent $\longrightarrow$ child & Cost \\ \midrule
Suffix & $\D_i(a,t)\longrightarrow\D_{i+1}(a,p)$
& $A_p-\min_{[p,t]}A$ \\
Low & $\D_i(a,t)\longrightarrow\D_{i+1}(a,v)$ & $0$ \\
High & $\D_i(a,t)\longrightarrow\HH_i^{[a,t]}(p,v)$ & $k(a,p)$ \\
Direct & $\HH_i^{\calB}(a,v)\longrightarrow\HH_i^{\calB}(p,v)$ & $k(a,p)$ \\
Near & $\HH_i^{\calB}(a,v)\longrightarrow\HH_i^{\calB}(h,v)$ & $0$ \\
Far & $\HH_i^{\calB}(a,v)\longrightarrow\D_{i+1}(b,v)$ & $0$ \\
\bottomrule\end{tabular}
\end{center}
The parent is bounded by a sum of the applicable child bounds, with polynomially
many frequency or integer-depth choices and finite maxima over cells; this is
not a claim that every child separately dominates the parent. Suprema over
symbols use common fixed bounds. A low-frequency stopping term is
$R^{C\sigma}$ up to a fixed factor. Terminal states are at most
$\poly R^{C(\kappa+\sigma)}$.
\end{proposition}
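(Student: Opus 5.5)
Proposition~\ref{trans:closure} is a catalogue of reductions. The plan is to prove it by checking the six rows one at a time. In each row, one of the two analytic propositions (Proposition~\ref{in:shell} or Proposition~\ref{in:graph}) is combined with the fixed scheduling choice of Lemma~\ref{def:lists}. Every step starts from the same recipe:
\begin{itemize}
\item For a nonterminal $\D_i(a,t)$, take the fixed representative $p$ and the prefix/suffix label from Lemma~\ref{def:lists}.
\item Expand the positive count of nearly equal distances by frequency shells $2^{v}$ with $v\in[a,t]$. This produces $O(N)$ frequency choices and is one source of the factor $\poly$.
\item Split the shells into three ranges: $v\le a+20\sigma N$ (low), $v$ below the scale $p$ (low), and $v$ beyond $p$ (high).
\end{itemize}

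\textbf{The $\D$-rows.}
\begin{itemize}
\item \emph{Low row.} Pass from level $i$ to level $i+1$ using the inclusion $\supp w_{i,\mathfrak t}\subset G_{i+1,\mathfrak t}$ of Lemma~\ref{def:masks}. Enlarging the cutoffs can only increase a positive count, so the cost is zero. The only loss is the number of shells, which gives the entire factor $\poly$ with no $R^{C\sigma}$.
\item \emph{High row.} Apply Proposition~\ref{in:shell} to convert the shell integral into $\sup\HH_F^{X,Y}(a,v)$. Its length hypothesis $L\le (v-a)/2+\sigma N$ holds because Lemma~\ref{def:lists} bounds list lengths by half the interval length. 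Then apply Proposition~\ref{in:graph} with the parallel test index $[g,p]$ that the list $\calM$ contains by construction; the origin property \eqref{def:origin} supplies it with $g$ within $2\sigma N$ of $a$. This yields exactly the factor $R^{-2(A_p-A_a)+k(a,p)+C_G\sigma}$, which is \eqref{trans:factor} with cost $k(a,p)$. The \emph{Direct} $\HH\to\HH$ row is the same application of Proposition~\ref{in:graph} without the first conversion.
\item \emph{Suffix row.} Here the list contains the projection test index $[p,t]$. Condition \eqref{def:projection} bounds the conditional pair mass at width $W_i(t)$ by $2^{-(t-p)}R^{\delta_A(p,t)+C\sigma}$. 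Rewriting the normalisation through \eqref{def:identity} gives the factor $R^{-2(A_p-A_a)}$ times the cost $A_p-\min_{[p,t]}A$, and the child is $\D_{i+1}(a,p)$.
\end{itemize}

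\textbf{The remaining $\HH$-rows.} When the graph estimate's hypotheses fail, I would split $\HH_i^{\calB}(a,v)$ by the spatial separation of the two retained cells.
\begin{itemize}
\item \emph{Near row.} For nearly coincident cells, pass to a finer common depth $h$. This is only a restriction of the same symbols to smaller cells, so it costs zero.
\item \emph{Far row.} For separated cells at depth $b$, the stationary-phase representation of $\HH$ reverts, by Plancherel in the angular variable, to a positive distance count $\D_{i+1}(b,v)$. The level increases by one because the argument uses one buffered neighbourhood from Lemma~\ref{def:masks}.
\end{itemize}

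\textbf{Completeness and bookkeeping.} To show these are \emph{all} the child types, I would do a case check: every shell of every state falls into the low range, the high range, or the terminal regime $t-a\le100q_0$. For a terminal state, counting occupied descendants with \eqref{mass:count-ball} over at most $100q_0\le 100\kappa N+100$ depths gives $\poly R^{C(\kappa+\sigma)}$. The remaining accounting is routine:
\begin{itemize}
\item Oscillatory tails are $C_TR^{-T}$ from the two propositions.
\item Summing the tails over $\poly$ choices preserves $O(R^{-T})$ once $T$ is increased.
\item Suprema over symbols use the fixed bounds $B_{\rm inv}(T)$.
\end{itemize}

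\textbf{Main obstacle.} The hardest step is verifying the hypotheses of Proposition~\ref{in:graph}, namely $p-a\le v-p+2\sigma N$ and $L\le v-p+2\sigma N$, in the high and direct rows. This is where the prefix/suffix dichotomy and the requirement $v-p\ge10\sigma N$ interact. I would first try to define the high/low threshold as $v\ge 2p-a$, choosing $p$ as a prefix point. Frequencies below that threshold must then be absorbed into the low child $\D_{i+1}(a,v)$, which requires the nesting of lists under interval containment from Lemma~\ref{def:lists}. If the threshold does not line up for a suffix choice, the suffix row handles that case instead.
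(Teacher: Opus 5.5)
There is a genuine gap. Your catalogue of six rows is the right one. However, the case distinctions you propose are not the ones under which Propositions~\ref{in:shell} and~\ref{in:graph} can be applied, and the angular rows rest on the wrong dichotomy.

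\emph{Low versus high.} The list $\calM(a,t)$ collects tests generated by every nonterminal subinterval, so it can contain lengths comparable to $(t-a)/2$. Lemma~\ref{def:lists} bounds $L$ by $(t-a)/2$, not by $(v-a)/2$. The inverse-shell hypothesis $L\le(v-a)/2+\sigma N$ therefore forces $t-v=O(\sigma N)$. With your threshold $v\ge 2p-a$ (and $p$ can lie within $2q_0$ of $a$), both that hypothesis and the graph condition $L\le v-p+2\sigma N$ fail in general.

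The correct high range is only $0\le t-v<\sigma N$, and only for a prefix choice. Then $\ell=p-a$ puts $[a,p]$ itself into the list, so $g=a$. The condition $2p\le a+t$ yields $p-a\le v-p+\sigma N$, $L\le v-p+\sigma N$ and $v-p>10\sigma N$.

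Every shell with $t-v\ge\sigma N$ must go to the low child, and that step is not just ``enlarging cutoffs''. You dominate the shell by a smooth multiplier whose kernel is at most $C2^v(1+2^v|s-s'|)^{-2}$ and drop the bounded weight. Then $\calM(a,v)\subseteq\calM(a,t)$ and $\supp w_{i,\mathfrak t}\subset G_{i+1,\mathfrak t}$ place you inside $\theta_{i+1}^{X,Y}(a,v)$, and Lemma~\ref{state:bins} finishes. The identical $M(a)^{-4}$ on both sides is what removes any $R^{C\sigma}$ loss. Only $|r|\le 2^a$ is a stopping term.

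\emph{Angular states.} The split is governed by $h=v-2L_0$ with $L_0=\max(L,\lceil10\sigma N\rceil)$, and stationary phase forces this choice. The symbols obey \eqref{sym:derivatives} with $D_*^2\le\max(R^{2\sigma},2^{2L}R^{-98\sigma})$. Stationary phase for $\int e^{irw\cdot\Delta}b(w)\,dw$ needs $r|\Delta|\gg D_*^2$, which is guaranteed once $|\Delta|\gtrsim2^{-h}$, since then $r|\Delta|\gtrsim2^{v-h}=2^{2L_0}$.

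If $h\le a$ (direct), the graph anchor is that of a longest remaining test. The hypotheses follow from an origin $[b_0,u_0]\subseteq\calB$ as in \eqref{def:origin}, together with $t_0\le v+\sigma N$ from \eqref{state:base}. Your proposal does not say which $p$ is used or why the inequalities hold.

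If $h>a$, you expand $|U_X|^2|V_Y|^2$ into four-label terms over depth-$h$ cells $P,P'\subset X$ and $Q,Q'\subset Y$. ``Near'' and ``far'' refer to the \emph{internal} displacements $x-x'$ and $y-y'$, not to the separation of $X$ from $Y$, which may coincide.
\begin{itemize}
\item Near terms are bounded by $2|U_P\overline{U_{P'}}V_Q\overline{V_{Q'}}|\le|U_PV_Q|^2+|U_{P'}V_{Q'}|^2$, using boundedly many neighbours and the descendant count. This produces $R^{-2(A_h-A_a)+4C_{\rm prof}\sigma}$; it is not a mere restriction.
\item Far terms need stationary phase at $\pm(x-x')/|x-x'|$ with the \emph{undeleted} symbols, whose supports then force the $G_{i+1}$ tests of $\calM(b,v)$. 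Radial integration by parts gives the kernel $(1+2^v|d_1-d_2|)^{-2}$, where $d_1=|x-x'|$ and $d_2=|y-y'|$. Reassembly over separated depth-$b$ pairs inside $X$ and inside $Y$, with Cauchy--Schwarz on distance bins, produces $\sqrt{\D_{i+1}^{P,P'}(b,v)\,\D_{i+1}^{Q,Q'}(b,v)}$.
\end{itemize}
The far child counts these internal distances; Plancherel in the angular variable does not produce it.

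\emph{Suffix row.} The projection test \eqref{def:projection} controls distance collisions only after the linearization \eqref{trans:linear}. Its error $O(2^{a-2p})$ is $O(2^{-t})$ precisely because $2p>a+t$. You then group cell pairs into coarse distance bins of width $2^{-p}$. The child $\D_{i+1}(a,p)$ keeps the start $a$ and the same normalization $M(a)^{-4}$, so its factor is $R^{A_p-\min_{[p,t]}A+C_{\rm test}\sigma}$. There is no $R^{-2(A_p-A_a)}$ factor and no use of \eqref{def:identity}.
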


\noindent The proof of Proposition~\ref{trans:closure}, including all six types and the terminal terms, is in Section~\ref{proof:closure}.

\begin{theorem}\label{budget:bound}
Under the stated profile, test and symbol hypotheses, an initial nonterminal
$\HH_0^{[c,N]}(c,N)$ with a fixed initial order bound satisfies
\begin{equation}\label{budget:result}
\HH_0^{[c,N]}(c,N)
\le C(1+N)^P R^{2A_c-\calV([c,N])+C_*(\kappa+\sigma/\kappa)}.
\end{equation}
The structural $C_*$ is independent of the finite differentiation orders.
$C,P,N_0$ may depend on all parameters and finite numerical bounds fixed
before $N$, but are uniform over the measures, cells and symbols obeying them.
\end{theorem}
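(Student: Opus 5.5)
We prove \eqref{budget:result} by strong induction over the finite recursion tree generated by Proposition~\ref{trans:closure}, using a potential that charges each state by its starting depth, its remaining interval of scales and its level. For a state with start $a$, base interval $\calB$ and level $i$, the inductive claim is
\[
\text{state}\le C_i(1+N)^{P_i}R^{2A_a-\calV(\calB)+C_*\kappa+C_\flat(i)\sigma/\kappa},
\]
where $\calV$ is the potential of Appendix~\ref{sec:iteration}. The level-dependent coefficient $C_\flat(i)$ is at most $C_*$ for $i\le J_0$, and the polynomial indices $C_i,P_i$ are allowed to grow with the level $i$. The root case $i=0$, $a=c$, $\calB=[c,N]$ is exactly \eqref{budget:result}.

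Each recursion step either moves to a strictly shorter interval of scales or moves to a deeper start. For the induction it therefore suffices to verify, for each of the six child types, that the scalar factor \eqref{trans:factor} times the child's inductive bound is at most the parent's bound. After the $R^{2A_a}$ normalizations cancel, this reduces to two conditions. The cost recorded in the table of Proposition~\ref{trans:closure} must be absorbed by the decrease $\calV(\text{parent})-\calV(\text{child})$. The residual $C\sigma$ loss must be charged to the level increment $i\to i+1$, or to a decrease of at least $q_0\approx\kappa N$ in the interval length.

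I would check the six transitions in the following order.
\begin{enumerate}
\item For the Low and Near children the cost is zero, and the only requirement is monotonicity of $\calV$.
\item For the Suffix child the cost $A_p-\min_{[p,t]}A$ is covered because $p$ minimizes $A$ among block representatives of $[a+q_0,t-q_0]$ (Lemma~\ref{def:lists}), so the defect is $O(q_0/N)=O(\kappa)$ after the Lipschitz bound. Alternatively, if $\calV$ includes a defect term for the discarded suffix, it pays the cost directly.
\item For the High and Direct children the cost is $k(a,p)$, and the factor $R^{-2(A_p-A_a)}$ shifts the normalization from $2A_a$ to $2A_p$; the quantity $k(a,p)$ is exactly what $\calV$ is designed to record as the drop in excess between the two endpoints.
\item The Far child raises the level, so its $C\sigma$ loss is paid from the level budget.
\end{enumerate}

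Along any root-to-leaf path there are at most $J_0=O(\kappa^{-1}+\sigma^{-1})$ level increments. The interval shortenings are by at least $q_0$, or halve the length by the list bound in Lemma~\ref{def:lists}, so there are $O(\kappa^{-1})$ macroscopic steps. The accumulated $C\sigma$ losses therefore total $O(\sigma/\kappa)$, apart from steps charged to levels, which I would handle by noting that $\sigma$-losses at level changes occur only on paths of length $O(1/\kappa)$. The branching multiplicities $\poly$ compound into $(1+N)^{O(1/\kappa)}$, which is still polynomial because the depth is bounded independently of $N$. Terminal states give $\poly R^{C(\kappa+\sigma)}$, and $\calV$ of a terminal interval is $O(\kappa)$. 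Low-frequency stopping terms are $R^{C\sigma}$, and the $O(R^{-T})$ errors are summed over the polynomially many nodes with $T$ chosen large.

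The main obstacle is the High/Direct accounting. The potential must absorb $k(a,p)$ while the normalization shifts from $A_a$ to $A_p$, even though $A$ is not monotone, so that $2A_a-\calV([a,t])\ge 2A_p-\calV(\text{child})+k(a,p)$ up to $O(\sigma)$. I would first try to use the explicit form of $\calV$ from Appendix~\ref{sec:iteration}, namely twice the endpoint excess minus a running-minimum defect, and verify this inequality using $\min_{[a,p]}A\ge A_p$ on the eligible range together with the prefix/suffix dichotomy. The key point is that $C_*$ must be fixed from the structural constants $C_G,C_{\rm test},C_{\rm prof}$ before any derivative orders, which holds because derivative orders enter only the prefactors $C_{T,B}$ and the thresholds.
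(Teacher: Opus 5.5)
Your induction over the recursion tree is a legitimate repackaging of the paper's branchwise telescoping. However, the inequality that carries the proof is left open, and the mechanisms you propose for it fail.

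\textbf{The payment inequality.} With your inductive claim, apart from the $C\sigma$ and polynomial factors, a child with factor \eqref{trans:factor} closes exactly when $\mathrm{cost}\le\calV(J')-\calV(J)$. So the cost is paid by the \emph{increase} of the potential on the smaller child interval $J'$; your ``decrease'' has the wrong sign.
\begin{itemize}
\item The factor's $-2(A_{a'}-A_a)$ already undoes the change of normalization. Your displayed High/Direct target, which additionally demands $2(A_p-A_a)$, is therefore too strong. For a linear profile $A_n=\lambda n/N$ with $0<\lambda\le1$, the potential rises by exactly $2k(a,p)$, while you require $3k(a,p)$.
\item $\calV$ must be evaluated on the current interval, not on the retained base $\calB$ as you wrote it. A High child $\HH_i^{[a,t]}(p,v)$ has base equal to the parent's interval $[a,t]$, and a Direct child keeps the parent's base. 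With $\calV(\calB)$, a positive $k(a,p)$ can therefore never be paid.
\item The potential is $\calV(J)=\min\{A_r+A_s: r<s \text{ in } J,\ s-r\ge10q_0\}$, not an endpoint-minus-defect expression. The payment (Lemma~\ref{budget:payment}) is a swap argument.
\begin{itemize}
\item Suffix: every child pair has $s$ in the parent's eligible range, where $A_s\ge A_p$, and $s$ is replaced by a minimizer on $[p,t]$.
\item High: $r$ is replaced by a minimizer on $[a,p]$.
\item Direct: you need the origin $[b_0,u_0]\subseteq\calB$ from \eqref{def:origin}. The minimizer $u$ on $[a,p]$ must lie left of the origin's eligible range, and together with $v-a\le2L$ this gives $v<u_0+2q_0$. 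Hence the child's $r$ lies in that range, and the swapped pair $(u,s)$ still lies in the actual parent $[a,v]$.
\end{itemize}
\item Your primary Suffix argument cannot replace this. The per-step bound $A_p-\min_{[p,t]}A\le4q_0/N$ is true (from $t_*\ge t-2q_0$ in Lemma~\ref{def:lists} and the Lipschitz bound). But there can be up to about $\kappa^{-1}$ suffix steps, so these bounds only sum to $O(1)$, which is not of the form $C_*(\kappa+\sigma/\kappa)$.
\end{itemize}

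\textbf{The $\sigma$-accounting and the leaves.} The $\sigma$-accounting needs the actual step count.
\begin{itemize}
\item Charging $C\sigma$ losses to level increments does not work. High, Direct and Near incur such losses without changing the level. Low steps raise the level up to about $\sigma^{-1}$ times, so a level-indexed budget only gives $O(1)$, and your claim $C_\flat(i)\le C_*$ fails.
\item What closes the argument is that Low children carry no $C\sigma$ loss at all (the exact common $M(a)^{-4}$ normalization), and that non-Low steps number $O(\kappa^{-1})$.
\item That count does not come from interval shortening. A Near step can move the start by a single depth, and a Direct step by an amount with no lower bound. Lemma~\ref{budget:termination} instead bounds:
\begin{itemize}
\item High and Direct together by the at most $\kappa^{-1}+2$ block representatives;
\item Far by the number of angular episodes;
\item Near by an amortized charge, either to a longest anchor just passed or to the Direct step that must follow.
\end{itemize}
\item The leaves are not handled by ``$\calV$ of a terminal interval is $O(\kappa)$''. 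That is false: $\calV$ there is about $2A_a$, which may be of order one. The needed fact is $\calV(J_*)\le2A_{a_f}+40q_0/N$. It follows by clamping a $10q_0$-separated pair around the final start $a_f$ inside the last interval $J_*$ on which $\calV$ is defined; $\calV$ is undefined below width $10q_0$, which your base case must also handle.
\item The additive errors are amplified by the principal factors of all ancestors, not merely summed over nodes. This is why the local accuracy must exceed $C_1J_{\rm rec}$.
\end{itemize}
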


\noindent The proof of Theorem~\ref{budget:bound} is in Section~\ref{proof:recursion}, using the potential and parameter choices of Appendix~\ref{sec:iteration}.

\subsection{Summing the regular pieces}

\begin{proposition}
\label{mass:recombination}
For a retained class pair define
\[
 \tau_{C_1,C_2,N}
 =\dist_\#\bigl[
 W\,m_{1,C_1,C_2}(x,e)m_{2,C_1,C_2}(y,e)\,
 \rho_{1,C_1}\otimes\rho_{2,C_2}\bigr].
\]
Then
\begin{equation}\label{mass:class-fourier}
 \widehat{\tau_{C_1,C_2,N}}(r)
 =\iint e^{-ir|x-y|}|x-y|^{-1/2}
       m_{1,C_1,C_2}(x,e(x,y))m_{2,C_1,C_2}(y,e(x,y))\,
       d\rho_{1,C_1}(x)d\rho_{2,C_2}(y),
\end{equation}
and exactly
\begin{equation}\label{mass:weighted-recombination}
 \tau_N=\sum_{C_1,C_2}a_{1,C_1}a_{2,C_2}\tau_{C_1,C_2,N},
 \qquad \sum_{C_1,C_2}a_{1,C_1}a_{2,C_2}\le1.
\end{equation}
Thus a common class-pair bound
$\int_I|\widehat{\tau_{C_1,C_2,N}}|^2\le B_I$
implies $\int_I|\widehat{\tau_N}|^2\le B_I$, without a class-count loss.
\end{proposition}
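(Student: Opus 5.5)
The plan is to unwind the definitions \eqref{mass:theta}--\eqref{mass:measures} on the decomposition of $\mu_1\otimes\mu_2$ by retained classes. We first write $\mu_j|_{C}=a_{j,C}\rho_{j,C}$ for each retained class $C$.

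Since the classes of each endpoint are disjoint, the indicator sum in \eqref{mass:theta} gives, pointwise on $K_1\times K_2$,
\[
 W\Theta_N\,\mu_1\otimes\mu_2
 =\sum_{C_1,C_2}W\,\one_{C_1}(x)\one_{C_2}(y)\,m_{1,C_1,C_2}(x,e)m_{2,C_1,C_2}(y,e)\,\mu_1\otimes\mu_2 .
\]
Discarded cells contribute nothing, because $\Theta_N$ vanishes there. The factor $\one_{C_1}(x)\one_{C_2}(y)\,\mu_1\otimes\mu_2$ equals $a_{1,C_1}a_{2,C_2}\,\rho_{1,C_1}\otimes\rho_{2,C_2}$. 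The sum is finite, since $\#\mathcal C_N\le Q_N$. Pushforward under $\dist$ is linear, so applying it term by term yields the identity in \eqref{mass:weighted-recombination}. The weight bound follows from
\[
 \sum_{C_1,C_2}a_{1,C_1}a_{2,C_2}=\mu_1\Bigl(\bigcup C_1\Bigr)\,\mu_2\Bigl(\bigcup C_2\Bigr)\le1 .
\]

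Formula \eqref{mass:class-fourier} is the change of variables for a pushforward. By definition, $\widehat{\dist_\#\lambda}(r)=\int e^{-ir|x-y|}\,d\lambda$ for the finite positive pair measure $\lambda$. The integrand is bounded, because $W\le a_*^{-1/2}$ and the cutoffs lie in $[0,1]$, so the integral is absolutely convergent.

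For the final assertion, first expand $\widehat{\tau_N}=\sum a_{1,C_1}a_{2,C_2}\widehat{\tau_{C_1,C_2,N}}$. Writing $\omega$ for these weights, with $\sum\omega\le1$, Cauchy--Schwarz (equivalently, convexity of $|\cdot|^2$) gives, pointwise in $r$,
\[
 |\widehat{\tau_N}(r)|^2\le\Bigl(\sum\omega\Bigr)\sum\omega\,|\widehat{\tau_{C_1,C_2,N}}(r)|^2\le\sum\omega\,|\widehat{\tau_{C_1,C_2,N}}(r)|^2 .
\]
Integrating over $I$ and applying the common bound $B_I$ then yields $\int_I|\widehat{\tau_N}|^2\le B_I\sum\omega\le B_I$.

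This argument has no real obstacle. The only point needing care is that the same cutoffs $m_{j,C_1,C_2}$ appear in $\Theta_N$ and in $\tau_{C_1,C_2,N}$, which holds exactly by construction. The original weights $a_{j,C}$ are used rather than renormalized ones, and that choice is what avoids any factor of $Q_N$.
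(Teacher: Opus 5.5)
Your proposal is correct and follows the paper's own proof. You substitute $\mu_j|_{C_j}=a_{j,C_j}\rho_{j,C_j}$ into \eqref{mass:theta} and push forward linearly to get \eqref{mass:weighted-recombination}, read \eqref{mass:class-fourier} off the pushforward definition, and apply $|\sum\omega z|^2\le(\sum\omega)\sum\omega|z|^2$ pointwise in $r$ before integrating. Your additional remarks (the finite class sum, $W\le a_*^{-1/2}$ on the product support, and $\sum a_{1,C_1}a_{2,C_2}=\mu_1(\bigcup C_1)\,\mu_2(\bigcup C_2)\le1$) only make explicit details that the paper leaves implicit.
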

\begin{proof}
The first identity is the pushforward definition.
Inserting $\mu_j|_{C_j}=a_{j,C_j}\rho_{j,C_j}$ in
\eqref{mass:theta} proves \eqref{mass:weighted-recombination}.
For nonnegative weights $b_\gamma$ with $\sum b_\gamma\le1$,
Cauchy--Schwarz gives
$|\sum b_\gamma z_\gamma|^2
\le(\sum b_\gamma)\sum b_\gamma|z_\gamma|^2$.
Apply this pointwise in frequency and integrate.
All cutoffs in \eqref{mass:class-fourier} are those in
\eqref{mass:theta}, at level zero. The auxiliary $\Gamma_N$ was never
inserted in either formula.
\end{proof}

\begin{remark}
The thresholds in Section~\ref{mass:section} depend only on the fixed prepared data:
finite-tree depth, $\log_R(2C_F)\le\sigma$, the polynomial factors in the
per-test averages, the already proved cutoff buffer, the entrance range,
the direction-density truncation, and the polynomial absorption
$Q_N(1+D_N)\le R^{\sigma/2}$. Their maximum is independent of the
actual prepared pair and all its individual class and anchor masses.
No oscillatory estimate was used to prove mass recovery.
At the root of \eqref{mass:class-fourier}, the inverse-shell variables
are $a=0$, $v=N$, and $M_1(0)=M_2(0)=1$; the distance weight is exactly
$W$. The root lists have maximal length at most $N/2$ and size at most
$D_N\le N^3$ for a common large $N$. The prepared support geometry
supplies the fixed separation and cutoff margins. Thus this exact root
integral is covered by the proved two-profile inverse-shell conversion.
Its frequency decay still uses the separate root entrance and recursive
energy argument; it does not follow from the mass estimate alone.
\end{remark}

\subsection{The profiles of the regular pieces}
Use the prepared probabilities and separated compact supports of
Section~\ref{mass:prepared-data}, and write $d_0=a_*$ and $d_1=d_*$.
Both probabilities lie in one unit dyadic square $Q_0$. Separation refers
to their actual supports. The enclosing copies of $Q_0$ may coincide,
as permitted in Proposition~\ref{compat:inverse}.

For every sufficiently large $N$, let $\mathcal C_{j,N}$ be the finite
disjoint classes from Lemma~\ref{mass:classes}, and write
\[
 a_{j,C}=\mu_j(C),\qquad
 \rho_{j,C}=a_{j,C}^{-1}\mu_j|_C.
\]
Only classes of positive mass are used. Every class is a probability on
$Q_0$, with a nondecreasing $2$-Lipschitz profile $m_{j,C}$ satisfying
$m_{j,C}(0)=0$. Its model masses are
$M_{j,C}(n)=R^{-m_{j,C}(n/N)}$, and its occupied-cell comparisons are
\eqref{def:profile}, with one common $C_{\rm prof}$.
The additional root hypothesis is the uniform lower bound
\begin{equation}\label{root:lower-profile}
 A_{j,C,n}:=m_{j,C}(n/N)-n/N
 \ge \chi n/N-C_{\rm low}\sigma,\qquad 0\le n\le N,
\end{equation}
where $0<\chi\le1$ and $C_{\rm low}$ are fixed independently of the class,
the pair of classes, and $N$. The class construction gives this with
$C_{\rm low}=12$ and any $\chi$ no larger than the starting dimension
margin. The frequency proof uses no lower bound on an individual class
mass and no bound on the number of classes.

All directional tests and cutoffs for a class refer to that original
normalized class. Restricting to a cell, deleting a reached cutoff, or
duplicating one endpoint later does not redefine them. Both classes use
the same dyadic grid and the same numerical cutoff constants. Their
profiles and scheduled test index lists may differ.

\begin{lemma}\label{root:entry}
Fix
\[
 0<\beta<\min(\chi/10,1/10),\qquad
 C_{\rm low}\sigma\le\chi/8.
\]
For a pair of class profiles, suppress the class labels and define
\begin{equation}\label{root:entry-depth}
 c=\max\{n\in\{0,\ldots,N\}:
                  \min(A_{1,n},A_{2,n})\le\beta\}.
\end{equation}
For all sufficiently large $N$, uniformly over the profiles,
\begin{gather}
 \lfloor\beta N/4\rfloor\le c<N/4,\qquad c>0,
 \label{root:entry-range}\\
 A_{j,n}\ge\beta-2/N
 \quad(j=1,2,\ c\le n\le N),
 \label{root:entry-lower}\\
 \min_j k_j(0,c)\le\beta+C_{\rm low}\sigma.
 \label{root:entry-cost}
\end{gather}
If $\kappa<10^{-4}$, then $[c,N]$ is nonterminal for all sufficiently
large $N$, and its two potentials satisfy
\begin{equation}\label{root:entry-potential}
 \mathcal V_j([c,N])\ge2\beta-4/N.
\end{equation}
\end{lemma}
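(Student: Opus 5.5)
The argument is elementary bookkeeping on the two profiles. It uses only the $2$-Lipschitz and normalization properties of $m_{j}$, the lower bound \eqref{root:lower-profile}, and the maximality built into \eqref{root:entry-depth}. I would prove the four displays in the order stated and then treat nonterminality and the potential.

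\emph{Range \eqref{root:entry-range}.} First, $c$ is well defined because $A_{j,0}=m_j(0)=0\le\beta$. Since $m_j$ is $2$-Lipschitz with $m_j(0)=0$, we have $A_{j,n}\le m_j(n/N)\le 2n/N$. Testing $n_0=\lfloor\beta N/4\rfloor$ gives $A_{j,n_0}\le\beta/2\le\beta$, so $c\ge n_0$, and $n_0>0$ once $N>4/\beta$. For the upper bound, take $n\ge N/4$. Then \eqref{root:lower-profile} and $C_{\rm low}\sigma\le\chi/8$ give $A_{j,n}\ge\chi/4-\chi/8=\chi/8$. This exceeds $\beta$ because $\beta<\chi/10$. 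Hence no $n\ge N/4$ qualifies, and $c<N/4$.

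\emph{Lower bound \eqref{root:entry-lower}.} For $c<n\le N$, maximality of $c$ forces $\min_jA_{j,n}>\beta$. At $n=c$ itself, note that $c+1\le N$ because $c<N/4$. The Lipschitz bound $|A_{j,c}-A_{j,c+1}|\le 2/N$ then gives $A_{j,c}>\beta-2/N$.

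\emph{Cost \eqref{root:entry-cost}.} By definition of $c$, some index $j$ has $A_{j,c}\le\beta$. For that $j$, \eqref{root:lower-profile} with $\chi n/N\ge0$ gives $\min_{0\le n\le c}A_{j,n}\ge -C_{\rm low}\sigma$. Therefore $k_j(0,c)=A_{j,c}-\min_{[0,c]}A_j\le\beta+C_{\rm low}\sigma$.

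\emph{Nonterminality.} We have $N-c>3N/4$, while $100q_0\le 100\kappa N+100<N/100+100$ since $\kappa<10^{-4}$. So $N-c>100q_0$ for large $N$, which means $[c,N]$ is nonterminal.

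\emph{Potential \eqref{root:entry-potential}.} This is the only step that depends on material not yet displayed, namely the definition of $\mathcal V_j$ in Appendix~\ref{sec:iteration}, so I expect it to be the main obstacle. My plan is to unfold that definition for the interval $[c,N]$. I expect it to be built from profile values $A_{j,n}$ at depths $n\in[c,N]$, namely the endpoint values and the minima at the scheduled representatives; I have not verified this. Each such value is at least $\beta-2/N$ by \eqref{root:entry-lower}. The right endpoint is even better: $A_{j,N}\ge\chi-C_{\rm low}\sigma\ge 7\chi/8$. If the potential is a sum of two such terms, the bound $2\beta-4/N$ follows directly. If it instead involves differences such as $A_t-\min A$, I would bound the subtracted minimum through the block-representative choice of Lemma~\ref{def:lists} together with \eqref{root:entry-lower}. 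All thresholds on $N$ used above depend only on $\beta,\chi,\kappa$, so the estimates are uniform over the profiles.
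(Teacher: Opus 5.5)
Your proof is correct and is essentially the paper's proof. For the range, the lower bound at and beyond $c$, the cost bound, and nonterminality, you use the same ingredients as the paper: membership of $\lfloor\beta N/4\rfloor$ via the Lipschitz bound, exclusion of every $n\ge N/4$ via \eqref{root:lower-profile}, one Lipschitz step from $c+1$ back to $c$, and $N-c>3N/4>100q_0$. For the potential, Appendix~\ref{sec:iteration} defines $\mathcal V([a,t])=\min\{A_r+A_s:\ a\le r<s\le t,\ s-r\ge10q_0\}$, so your first branch is the one that applies. Nonterminality makes this minimum nonempty, and every index in it lies in $[c,N]$ and satisfies \eqref{root:entry-lower}. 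This gives $\mathcal V_j([c,N])\ge2\beta-4/N$ exactly as in the paper, so your difference-based fallback is not needed.
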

\begin{proof}
The index zero belongs to the set in \eqref{root:entry-depth}. The
Lipschitz estimate $|A_{j,n}-A_{j,0}|\le2n/N$ shows that every integer
$n\le\beta N/4$ also belongs. For $n\ge N/4$, on the other hand,
\[
 A_{j,n}\ge\chi/4-C_{\rm low}\sigma\ge\chi/8>\beta.
\]
This proves \eqref{root:entry-range}, including $c>0$ after a common
threshold. Both profiles exceed $\beta$ at $c+1$ and at every subsequent
index. The Lipschitz bound gives \eqref{root:entry-lower}. At least one
profile has $A_{j,c}\le\beta$, while \eqref{root:lower-profile} gives
$\min_{0\le n\le c}A_{j,n}\ge-C_{\rm low}\sigma$; this proves
\eqref{root:entry-cost}. Finally $N-c>3N/4>100\lceil\kappa N\rceil$
for all sufficiently large $N$. Every index in the minimum defining
$\mathcal V_j([c,N])$ satisfies \eqref{root:entry-lower}, proving
\eqref{root:entry-potential}.
\end{proof}

For clarity, we write the class component from
Proposition~\ref{mass:recombination} in the notation used in the analytic
estimates below.

\subsection{The Fourier estimate for one pair of pieces}
For each class pair choose its own $c$ from \eqref{root:entry-depth} and
put
\[
 \mathcal A_j=\{(\mathrm{parallel},0,c)\}
                   \cup\mathcal M_{\rho_j}(c,N),\qquad
 b_j(z,w)=\prod_{\mathfrak t\in\mathcal A_j}
                       w^{(j)}_{0,\mathfrak t}(z,w).
\]
These are exactly the cutoffs used in \eqref{mass:theta}. The extra
test index has anchor $c$; every anchor in $\mathcal M_{\rho_j}(c,N)$
is strictly greater than $c$. Thus the union introduces no repeated
test index. Define the normalized component
\begin{equation}\label{root:component}
 \tau_{C_1,C_2,N}
 =\operatorname{dist}_{\#}\!
 \left[
 |x-y|^{-1/2}
 b_1(x,e(x,y))b_2(y,e(x,y))
 \,d\rho_{1,C_1}(x)d\rho_{2,C_2}(y)
 \right].
\end{equation}
The factor $|x-y|^{-1/2}$ is precisely the outer weight in inverse
conversion at $a=0$. Both root model masses equal one. No normalization
of this weight or of an actual cell mass is introduced.

\begin{proposition}\label{root:frequency}
Assume the prepared geometry and uniform class data above, with the exact
common cutoffs in \eqref{mass:theta}. There is a structural constant $C_*$
as in Theorem~\ref{budget:bound}. The parameters can be chosen, in the
order $\beta,\kappa,\sigma$, so that some common $C_s,N_s$ satisfy
\begin{equation}\label{root:shell-decay}
 \int_{2^{N-1}\le |r|\le2^{N+1}}
                 |\widehat{\tau_N}(r)|^2\,dr
 \le C_s\,2^{-\beta N/2}\qquad(N\ge N_s).
\end{equation}
The constants are uniform over the prepared pairs, their classes and
profiles obeying the displayed numerical data. No class-count factor
appears in this bound.
\end{proposition}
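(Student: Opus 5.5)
The plan is to reduce the estimate for $\tau_N$ to a uniform bound for one normalized class-pair component, and then to feed that component through the inverse-shell conversion and the recursive budget. By Proposition~\ref{mass:recombination}, a common bound $B_I$ for $\int_I|\widehat{\tau_{C_1,C_2,N}}|^2$ over $I=\{2^{N-1}\le|r|\le2^{N+1}\}$ transfers to $\tau_N$ with no class-count loss. So I fix one retained pair $(C_1,C_2)$, take its entrance depth $c$ from \eqref{root:entry-depth}, and use the component \eqref{root:component}. This component is exactly the $\zeta$ of Proposition~\ref{in:shell} with $a=0$, $v=N$, $M_1(0)=M_2(0)=1$, level $i=0$, and the root lists $\mathcal A_j$. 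Those lists satisfy $\#\mathcal A_j\le D_N\le N^3$ and have maximal length at most $N/2\le (v-a)/2+\sigma N$, and the prepared geometry supplies the cutoff $\psi$. Proposition~\ref{in:shell}, in its two-profile form from Proposition~\ref{compat:inverse}, then gives
\[
\int_I|\widehat{\tau_{C_1,C_2,N}}|^2\le C_T\sup\HH_F(0,N)+C_TR^{-T}.
\]
Taking $T=1$ makes the error term negligible.

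Next I descend from depth $0$ to the entrance depth $c$. Both symbols retain the entrance parallel test $(\mathrm{par},0,c)$, so Proposition~\ref{in:graph} applies with $a=g=0$ and $p=c$. Its side conditions hold for large $N$ by \eqref{root:entry-range}: $c<N/4$, so $v-p>3N/4$ exceeds both $p-a$ and $L\le N/2$. This gives
\[
\HH(0,N)\le C R^{-2A_c+k(0,c)+C_G\sigma}\max\HH^{P,Q}(c,N)+CR^{-T}.
\]
The graph bound is stated with one-profile notation, so here I use the two-profile version and choose the profile $j$ that realizes $\min_jk_j(0,c)\le\beta+C_{\rm low}\sigma$ in \eqref{root:entry-cost}. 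Each child term $\HH^{P,Q}(c,N)$ is the initial state $\HH_0^{[c,N]}(c,N)$. That state is nonterminal by Lemma~\ref{root:entry}, so Theorem~\ref{budget:bound} bounds it by $\poly R^{2A_c-\calV([c,N])+C_*(\kappa+\sigma/\kappa)}$. The factors $R^{\mp2A_c}$ cancel exactly. By \eqref{root:entry-potential}, $\calV\ge2\beta-4/N$, so the total is
\[
\poly\,R^{-2\beta+\beta+C_{\rm low}\sigma+C_G\sigma+C_*(\kappa+\sigma/\kappa)}.
\]
Here $\poly$ absorbs the $R^{4/N}=16$ factor. If $\calV$ is a combined two-profile potential, I would take the relevant minimum consistently so that the same cancellation still occurs.

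The parameters are then fixed in the order $\beta,\kappa,\sigma$. First take $\beta=\chi/20$, which satisfies Lemma~\ref{root:entry}. Next choose $\kappa<10^{-4}$ with $C_*\kappa\le\beta/8$. Finally choose $\sigma<\kappa/1000$ with $(C_{\rm low}+C_G)\sigma+C_*\sigma/\kappa\le\beta/8$ and $C_{\rm low}\sigma\le\chi/8$. The exponent is then at most $-\beta+\beta/4=-3\beta/4$, and the polynomial factor is absorbed into $R^{\beta/4}$ for $N\ge N_s$. This yields $C_s2^{-\beta N/2}$ uniformly, because every constant depends only on the fixed data. The negative frequencies follow by conjugate symmetry, since $\tau_N$ is a real measure.

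The main obstacle is the matching between the entrance descent and the budget theorem. Proposition~\ref{in:graph} requires the parallel test $[0,c]$ to be retained as a reached factor. The children must inherit the full lists $\mathcal M(c,N)$ at level $0$ so that they are admissible $\HH_0^{[c,N]}$ states. The $k$-cost must also be charged to the profile whose potential is used; I would resolve this by invoking the two-profile maximum form of Proposition~\ref{compat:graph} directly.
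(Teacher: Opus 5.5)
\textbf{There is a genuine gap.} Your reduction via Proposition~\ref{mass:recombination}, the inverse-shell step at $a=0$, $v=N$, the two-profile graph step at $p=c$, and the parameter bookkeeping all follow the paper. The step that fails is the claim that each child $\HH^{P,Q}(c,N)$ ``is the initial state $\HH_0^{[c,N]}(c,N)$''.

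\emph{Why the child is not an admissible state.} The child couples endpoint $1$, which carries $\rho_{1,C_1}$, its model mass $M_1$ and its list $\calM_{\rho_1}(c,N)$, with endpoint $2$. That endpoint carries a generally different class $\rho_{2,C_2}$, with its own profile and list. An angular state in \eqref{state:H}--\eqref{state:base}, and hence Theorem~\ref{budget:bound}, is a one-profile object. It has a single $\rho$, a single $M(a)$, one scheduled list $\calM(\calB)$ and one potential $\calV$, and the six transitions and the payment lemma use that single profile. There is no ``combined two-profile potential'' available to you.

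\emph{Why choosing one profile does not fix it.} The two-profile graph factor from \eqref{graph:max-bound} is $R^{-A_{1,c}-A_{2,c}+\min_jk_j(0,c)+C_G\sigma}$, not $R^{-2A_c+k(0,c)+\cdots}$. So picking the profile $j$ that realizes the minimum cannot cancel against a single factor $R^{2A_{j,c}}$. Your closing remark about invoking the two-profile form of Proposition~\ref{compat:graph} handles only the graph step, not the recursion that follows it.

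\emph{The missing idea: split the two profiles before the recursion.} Write $\HH^{P,Q}(c,N)=2^{2(N-c)}\int E_1E_2\,d\lambda_N$. Here $E_1=\int|U_P|^2\,dw$ and $E_2=\int|V_Q|^2\,dw$ each depend on only one endpoint, and $d\lambda_N=\Psi(r/2^N)\,dr/2^N$ is a common nonnegative measure. Cauchy--Schwarz in $r$ then gives $\HH^{P,Q}(c,N)\le\HH_1^{1/2}\HH_2^{1/2}$. Here $\HH_j=2^{2(N-c)}\int E_j^2\,d\lambda_N$ is the angular state obtained by duplicating endpoint $j$, meaning its law, cell and symbol; coincident cells are permitted in angular states. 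Each $\HH_j$ is a genuine one-profile state $\HH_0^{[c,N]}(c,N)$ only after you remove the reached factor $[0,c]$ with Lemma~\ref{state:delete}. You must do this explicitly: the graph children inherit the extra $[0,c]$ cutoff, while \eqref{state:base} requires exactly $\calM_{>c}([c,N])$.

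\emph{How the argument then closes.} Theorem~\ref{budget:bound}, applied to each duplicated state separately, gives
\[
\sqrt{\HH_1\HH_2}\le\poly\,R^{A_{1,c}+A_{2,c}-\frac12(\calV_1+\calV_2)+C_*(\kappa+\sigma/\kappa)}.
\]
This cancels both entrance values. The remaining exponent is $\min_jk_j(0,c)-\tfrac12(\calV_1+\calV_2)+C_G\sigma+C_*(\kappa+\sigma/\kappa)$. By \eqref{root:entry-cost} and by \eqref{root:entry-potential} applied to each $j$, it is at most $-\beta+(C_{\rm low}+C_G)\sigma+C_*(\kappa+\sigma/\kappa)+4/N$. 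With this splitting and the deletion step inserted, the rest of your argument goes through unchanged.
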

\begin{proof}
Fix one pair of normalized classes. List geometry gives
\[
 L\le\max\{c,(N-c)/2\}\le N/2,
 \qquad \#\mathcal A_j\le N^3
\]
after a common threshold. Proposition~\ref{compat:inverse}, in its
two-profile outer-support form, applies with $a=0$, $v=N$, level zero,
and initial spatial multipliers one. Fix an outer accuracy, for example
$T_{\rm out}=20$. It gives
\begin{equation}\label{root:inverse}
 \int_{2^{N-1}}^{2^{N+1}}
       |\widehat{\tau_{C_1,C_2,N}}(r)|^2\,dr
 \le C\sup\mathcal H_F(0,N)+CR^{-20}.
\end{equation}
Every output has exactly the original numbered cutoffs at level zero,
each appearing once, bounded spatial multipliers independent of angle
and frequency, and one common finite derivative bound $B_{\rm root}$.
All finite numerical bounds subsequently requested are uniform.
The directional cutoff stays external to the endpoint symbols, so
$\mathcal H_F\le\mathcal H$.

Apply the two-profile maximum estimate
\eqref{graph:max-bound} with $a=0$, $p=c$, $v=N$, and the two retained
parallel test indices $[0,c]$. Its hypotheses hold uniformly:
\[
 0<c<N/4,\qquad c\le N-c+2\sigma N,\qquad
 L\le N-c+2\sigma N,\qquad N-c\ge10\sigma N.
\]
The last inequality follows from $\sigma<\kappa/1000$.
Since $A_{1,0}=A_{2,0}=0$, the resulting factor is
\begin{equation}\label{root:graph-factor}
 R^{-A_{1,c}-A_{2,c}+\min_j k_j(0,c)+C_G\sigma}.
\end{equation}
The children are independent angular energies on pairs of occupied
depth-$c$ cells, with the unchanged output symbols.

Only now delete the reached extra test index $[0,c]$ on each endpoint,
using Lemma~\ref{state:delete}. Its differentiated variants are also
constant in space on the selected cell and bounded by a fixed constant.
This incurs a fixed scalar cost and introduces no angular multiplier
or new derivative in a child. The remaining endpoint lists are exactly
$\mathcal M_{\rho_j,>c}([c,N])=\mathcal M_{\rho_j}(c,N)$.

For a fixed cell pair and fixed remaining symbols, write
\[
 E_j(r)=\int_{S^1}|U_j(r,w)|^2\,dw,\qquad
 d\lambda_N(r)=\Psi(r/2^N)\,\frac{dr}{2^N}.
\]
These quantities are nonnegative and the radial measure is common to
both endpoints. Therefore
\begin{align}
 2^{2(N-c)}\int E_1E_2\,d\lambda_N
 &\le
 \left(2^{2(N-c)}\int E_1^2\,d\lambda_N\right)^{1/2}
 \left(2^{2(N-c)}\int E_2^2\,d\lambda_N\right)^{1/2}.
 \label{root:profile-cs}
\end{align}
Each factor inside a square root is the angular state obtained by
duplicating one endpoint law, cell and symbol. The two cells in this
state coincide, which is explicitly allowed. Its profile, full tests
and model denominator are those of that original class. Its retained
base is $[c,N]$, its index is zero, and its exact remaining list is the
one required in \eqref{state:base}. A common initial order bound
$2B_{\rm root}$ safely covers duplication, regardless of whether an
order bound is recorded per endpoint or jointly.

Theorem~\ref{budget:bound}, applied separately to the two states, gives
\[
 \mathcal H_j(c,N)
 \le C(1+N)^P
 R^{2A_{j,c}-\mathcal V_j([c,N])
                         +C_*(\kappa+\sigma/\kappa)}.
\]
The constants can be common because the structural and finite numerical
data are common; neither a class's actual mass nor its identity enters.
Combining these estimates with \eqref{root:graph-factor} cancels both
entrance values $A_{j,c}$. The remaining exponent is at most
\begin{align}
 \min_j k_j(0,c)
 -\tfrac12\bigl(\mathcal V_1([c,N])+\mathcal V_2([c,N])\bigr)
 +C_G\sigma+C_*(\kappa+\sigma/\kappa)
 \notag\\
 \le-\beta+(C_{\rm low}+C_G)\sigma
            +C_*(\kappa+\sigma/\kappa)+4/N.
 \label{root:gain}
\end{align}
The last $4/N$ costs only the fixed factor $R^{4/N}=16$.
Consequently, uniformly for every class pair,
\begin{equation}\label{root:component-shell}
 \int_{2^{N-1}}^{2^{N+1}}
       |\widehat{\tau_{C_1,C_2,N}}(r)|^2\,dr
 \le C(1+N)^P
 R^{-\beta+(C_{\rm low}+C_G)\sigma+
                         C_*(\kappa+\sigma/\kappa)}
 +CR^{-20}.
\end{equation}

For explicit successive choices, take $\beta=\chi/20$, then
\[
 0<\kappa<
 \min\left\{10^{-4},\frac{\beta}{16(1+C_*)}\right\},
\]
and finally choose
\begin{equation}\label{root:parameter-choice}
 0<\sigma<
 \min\left\{
 \frac{\kappa}{1000},
 \frac{\chi}{8(1+C_{\rm low})},
 \frac{\beta}{16(1+C_{\rm low}+C_G)},
 \frac{\beta\kappa}{16(1+C_*)}
 \right\}.
\end{equation}
Each of the three positive contributions in the exponent of
\eqref{root:component-shell} is then at most $\beta/16$. After increasing
one common threshold, $(1+N)^P\le R^{\beta/16}$. This proves the stronger
component bound $CR^{-3\beta/4}+CR^{-20}$, and hence a bound
$CR^{-\beta/2}$ with room in the exponent.

Finally use the exact identity \eqref{mass:weighted-recombination}:
\[
 \tau_N=\sum_{C_1,C_2}a_{1,C_1}a_{2,C_2}\,
                         \tau_{C_1,C_2,N},\qquad
 \sum_{C_1,C_2}a_{1,C_1}a_{2,C_2}\le1.
\]
Weighted Cauchy--Schwarz gives, pointwise in $r$,
\[
 |\widehat{\tau_N}(r)|^2
 \le\sum_{C_1,C_2}a_{1,C_1}a_{2,C_2}
                   |\widehat{\tau_{C_1,C_2,N}}(r)|^2.
\]
Integrating and applying the uniform component estimate proves the
positive-shell bound. The measure $\tau_N$ is real and positive, so
$\widehat{\tau_N}(-r)=\overline{\widehat{\tau_N}(r)}$. Doubling the
constant proves \eqref{root:shell-decay}.
\end{proof}

\subsection{A common starting scale}
Here is the order of the finite analytic choices in the preceding proof.
First fix the prepared numerical data, $C_{\rm prof},C_{\rm low},
C_{\rm test}$, and the common geometric and cutoff data. Then choose
$\beta,\kappa,\sigma$ as above. The branch bound
$J_{\rm rec}$ is now fixed. Choose the recursive local accuracy greater
than $C_1J_{\rm rec}+30$, with the fixed margins in
Theorem~\ref{budget:bound}; the separate outer accuracy is $20$.
These accuracies determine the inverse orders and $B_{\rm root}$.
Fix one prescribed order bound dominating
$2B_{\rm root}$ and every inverse output in the recursion. Only then
choose the temporary graph and far integration orders. These operations
do not enlarge the prescribed order bound.

The threshold $N_s$ is the maximum of the common class-construction
threshold, the entrance and list-size thresholds, the cutoff-buffer and
product-rule thresholds, the outer inverse and graph thresholds, and
the recursive thresholds at these fixed finite orders, together with
the final polynomial absorption threshold. All are uniform over the
stated prepared family. Taking a maximum over finitely many kinds of
threshold is legitimate even though the number of actual classes
changes with $N$: none of these thresholds depends on an individual
class or symbol. No bound on a derivative order enters
$C_G$ or $C_*$.

\subsection{Combining the mass and Fourier estimates}
\label{proof:assembly}

\begin{proof}[Proof of Theorem~\ref{root:certificate}]
The mass and total-variation assertions follow from
Theorem~\ref{mass:recovery}; the shell assertion is
Proposition~\ref{root:frequency}. Take the larger of the two common
starting thresholds. The mass bounds use that $\mu_1,\mu_2$ are
probabilities and $d_0\le |x-y|\le d_1$. Compact support follows from
continuity of the distance map on $K_1\times K_2$. The same
$\Theta_N$ appears in both proofs by construction. The two exponential
tails are summable, and a common sufficiently large cutoff makes the
total-variation tail smaller than the fixed positive mass required in
Corollary~\ref{recon:uniform}. Theorem~\ref{prepU:normalized-pair}
supplies the uniform availability of prepared pairs and the common
similarity needed to apply Corollary~\ref{recon:certificate} to every
normalized Frostman input; Section~\ref{proof:normalized} carries out that step.
\end{proof}

\section{Localization and quantitative consequences}

\label{loc:section}

\begin{lemma}[Maximum-density localization]
\label{loc:localization}
Let $0<t<s$, and let $E\subset\mathbb R^2$ be compact with
$\operatorname{diam}E=1$. Suppose that a probability measure $\mu$
supported on $E$ satisfies
\[
 \mu(B(x,r))\le Ar^s
 \quad(x\in\mathbb R^2,\ 0<r\le1),\qquad A\ge1.
\]
There are $x_0\in\mathbb R^2$ and $0<r_0\le1$ such that
\[
 r_0\ge(2A)^{-1/(s-t)},\qquad
 F=\frac{E\cap\overline B(x_0,r_0)-x_0}{r_0}
       \subset\overline B(0,1)
\]
is compact and supports a probability measure $\nu$ satisfying
$\nu(B(z,u))\le2u^t$ for every $z\in\mathbb R^2$ and $0<u\le1$.
Moreover $r_0\Delta F\subseteq\Delta E$.
\end{lemma}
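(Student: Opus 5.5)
The plan is a maximal-density (stopping-scale) argument. For $x\in\mathbb R^2$ and $0<r\le1$ consider the normalized density
\[
\Phi(x,r)=\frac{\mu(\overline B(x,r))}{r^t}.
\]
Since $\mu$ is a probability supported on $E$ and $\operatorname{diam}E=1$, any ball of radius one centred at a point of $E$ has mass one. Hence $\sup\Phi\ge1$. On the other hand, $\Phi(x,r)\le Ar^{s-t}$, up to the harmless passage from open to closed balls by monotone limits. So small radii have small density.

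Fix the threshold $r_1=(2A)^{-1/(s-t)}$. For $r<r_1$ we get $\Phi(x,r)\le A r^{s-t}<1/2$. I would choose $(x_0,r_0)$ to nearly maximize $\Phi$ over $r\in[r_1,1]$: pick a pair with
\[
\Phi(x_0,r_0)\ge\tfrac12\sup_{x,\,r_1\le r\le1}\Phi(x,r).
\]
Alternatively, one can attain the supremum exactly, using compactness and upper semicontinuity of $(x,r)\mapsto\mu(\overline B(x,r))$ on a compact parameter set; the centres may be restricted to a bounded neighbourhood of $E$. Exact attainment is cleaner and gives constant $1$ instead of $2$. In either version $r_0\ge r_1$, as required.

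Then define $\nu$ as the pushforward of $\mu|_{\overline B(x_0,r_0)}/\mu(\overline B(x_0,r_0))$ under $z\mapsto(z-x_0)/r_0$. This is a probability supported on $F$, and $F$ is compact and contained in $\overline B(0,1)$. For $z$ and $0<u\le1$, the ball $B(z,u)$ pulls back to $B(x_0+r_0z,r_0u)$, whose radius $r_0u$ lies in $(0,r_0]$. There are two cases.

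\emph{Case $r_0u\ge r_1$.} Maximality gives
\[
\mu\bigl(B(x_0+r_0z,r_0u)\bigr)\le\Phi(x_0,r_0)\,(r_0u)^t,
\]
so
\[
\nu(B(z,u))\le\frac{\Phi(x_0,r_0)(r_0u)^t}{\Phi(x_0,r_0)r_0^t}=u^t,
\]
or $2u^t$ with the half-maximizer.

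\emph{Case $r_0u<r_1$.} Here
\[
\mu\bigl(B(x_0+r_0z,r_0u)\bigr)\le A(r_0u)^s=A(r_0u)^{s-t}(r_0u)^t\le\tfrac12(r_0u)^t .
\]
The denominator satisfies $\mu(\overline B(x_0,r_0))=\Phi(x_0,r_0)r_0^t\ge\tfrac12 r_0^t$, using $\sup\Phi\ge1$ with the half-maximizer version, or $\ge r_0^t$ with the exact maximizer. So $\nu(B(z,u))\le u^t$ in the exact version. In the half-maximizer version the bound is $\tfrac12(r_0u)^t/(\tfrac12 r_0^t)=u^t$. Either way the bound is within $2u^t$.

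The inclusion $r_0\Delta F\subseteq\Delta E$ is immediate: $|r_0(a-b)|=|a'-b'|$ for the corresponding points $a',b'$ of $E\cap\overline B(x_0,r_0)$.

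The main obstacle is bookkeeping rather than ideas. First, one must check that the supremum defining the maximizer is at least $1$ within the restricted range $r\in[r_1,1]$; this holds because $r=1$ is allowed. Second, the factor $2$ must cover both the near-maximality loss and the small-scale regime, which is why $r_1$ includes the factor $2A$. Third, one must handle the open/closed ball distinction and the choice of centres possibly outside $E$ (restrict to $\dist(x,E)\le1$) to secure compactness for attaining the maximum. If attainment is awkward, I would use the half-maximizer route, whose two cases each give at most $u^t\le2u^t$.
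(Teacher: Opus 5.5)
Your proof is correct and is essentially the paper's argument. You take a factor-$2$ near-maximizer of $\mu(\overline B(x,r))/r^t$, use $\overline B(x,1)\supseteq E$ for $x\in E$ to see that the supremum is at least $1$, and rescale the chosen closed ball. The one difference is how the lower bound on $r_0$ is obtained. The paper takes the supremum $M_t$ over all $r\in(0,1]$, so $r_0\ge(2A)^{-1/(s-t)}$ follows from $\tfrac12M_tr_0^t\le\mu(\overline B(x_0,r_0))\le Ar_0^s$, and the rescaled Frostman bound then needs no case split. You instead restrict to $r\ge r_1$ in advance, which forces you to treat small pulled-back radii separately using the hypothesis directly.
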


\begin{proof}
The ball bound also holds for closed balls of radius $r<1$, by
decreasing open radii to $r$. For radius one it follows from the
total mass and $A\ge1$. Define
\[
 M_t=\sup_{x\in\mathbb R^2,\,0<r\le1}
              \frac{\mu(\overline B(x,r))}{r^t}.
\]
The closed unit ball about any point of $E$ contains $E$; hence
$1\le M_t\le A$. Choose a closed ball
$B_0=\overline B(x_0,r_0)$ for which
\[
 q_{\rm loc}:=\mu(B_0)\ge\tfrac12M_tr_0^t>0.
\]
No attainment of the supremum is required. Comparing this inequality
with $q_{\rm loc}\le Ar_0^s$ gives
$r_0^{s-t}\ge M_t/(2A)\ge1/(2A)$.
Let $T(x)=(x-x_0)/r_0$ and
$\nu=T_\#(q_{\rm loc}^{-1}\mu|_{B_0})$.
For $0<u\le1$, the definition of $M_t$ gives
\[
 \nu(B(z,u))
 \le q_{\rm loc}^{-1}\mu(\overline B(x_0+r_0z,r_0u))
 \le q_{\rm loc}^{-1}M_t(r_0u)^t\le2u^t.
\]
The support and compactness assertions follow from the restriction
to the closed ball and the similarity $T$. Distances scale exactly
by $r_0$, proving the final inclusion.
\end{proof}

\subsection{Arbitrary margin and the power consequence}
\label{proof:localization-consequences}

\begin{proof}[Proof of Corollary~\ref{loc:power}]
First translate and divide all spatial coordinates by $D$.
The resulting probability satisfies the hypothesis of
Lemma~\ref{loc:localization} with $s=1+\varepsilon$ and
$t=1+\eta$. Apply $U(\eta)$ to the resulting $F$, use
$r_0\Delta F\subseteq\Delta(E/D)$, and multiply lengths by $D$.
This proves \eqref{loc:arbitrary-margin}. For the final assertion,
set $\eta=\varepsilon-1/p$; the condition $p>1/\varepsilon$
ensures $0<\eta<\varepsilon$.
\end{proof}

The choice $\eta=\varepsilon/2$ recovers the exponent
$2/\varepsilon$. The improved power above does not include the
endpoint $p=1/\varepsilon$, which would require $\eta=0$.
It also supplies no bound on how $b(\eta)$ behaves as $\eta$ tends
to zero. The following proof gives the classical consequence.

\subsection{The classical compact planar consequence}
\label{proof:classical}

\begin{proof}[Proof of Corollary~\ref{loc:falconer}]
Choose $1<s<\min\{\dim_H E,2\}$ and write $s=1+\varepsilon$.
Frostman's lemma gives a probability measure on $E$ and a finite constant
$C$ with $\mu(B(x,r))\le Cr^s$ for all $x,r$. The positive diameter
$D$ is finite. Taking $A=\max\{1,CD^s\}$ gives
\eqref{loc:original-condition}. Apply Corollary~\ref{loc:power} with,
for example, $\eta=\varepsilon/2$. Every factor in its lower bound is
positive. Theorem~\ref{root:normalized} supplies the premise
$U(\eta)$ from the preparation and common-cutoff assembly in this paper.
\end{proof}

\appendix

\section{An inverse oscillatory estimate}
\label{inv:proved-section}

The shared-angle cutoff must be fixed by the separated geometry.
Choose once and for all a real $\psi_0\in C^\infty(S^1)$, with
$0\le\psi_0\le1$, equal to one for angular distance at most $\pi/8$
from $(1,0)$ and zero for angular distance at least $\pi/4$.
For the ordered separated cell pair $(X,Y)$ rotate this template so that
its center is $(c_X-c_Y)/|c_X-c_Y|$, and call the result $\psi$.
With $K=4096$, every actual direction $e=(x-y)/|x-y|$ is in a uniformly
smaller arc on which $\psi=1$, and $\psi$ vanishes on a uniform
neighborhood of all $-e$. All derivatives of this family of cutoffs
have common numerical bounds. An arbitrary cutoff, for example
$\psi\equiv0$, would not give the theorem below.

\begin{proposition}
\label{compat:inverse}
The assertion of Proposition~\ref{in:shell} holds for the cutoffs and endpoint
family \eqref{sym:family}, with the geometric cutoff just specified.
Its fixed prefactor $C_T$ needs no polynomial factor in $N$.
No additional angular multiplier is needed.

Here is a more general version, also proved below. Allow two restricted
finite positive measures $\rho_1|_X,\rho_2|_Y$, separate positive model masses
$M_1(a),M_2(a)$, and separate numbered lists $\mathcal A_1,\mathcal A_2$
of size at most $N^3$, whose cutoffs obey the same fixed uniform hypotheses.
Assume $\rho_1(X)/M_1(a),\rho_2(Y)/M_2(a)\le C_mR^\mu$ for fixed
$C_m\ge1$, $\mu\ge0$. The geometric hypotheses are that $X,Y$ lie in
translated $2^{-a}$ dilates of fixed bounded boxes, their actual product
support has $d_0\le2^a|x-y|\le d_1$, and its directions lie in a fixed
small arc. The cutoff is one on a uniform neighborhood of that arc and
zero on a uniform neighborhood of the antipodal arc, with common finite
derivative bounds. This includes the separated cells above and fixed outer
supports at $a=0$.

With $m_x=\prod_{\mathfrak t\in\mathcal A_1}w_{i,\mathfrak t}(x,\cdot)$,
$n_y=\prod_{\mathfrak t\in\mathcal A_2}w_{i,\mathfrak t}(y,\cdot)$ and
arbitrary Borel $|f|,|g|\le1$, set
\[
\nu_{f,g}=\dist_\#\left[
\frac{(2^a|x-y|)^{-1/2}f(x)\overline{g(y)}m_x(e)n_y(e)}
 {M_1(a)M_2(a)}\,d\rho_1|_X(x)d\rho_2|_Y(y)\right].
\]
Use the separate model masses in $U_X,V_Y$, and define $\HH_F$ by the
same shared-angle formula and fixed common $\Psi$. Then the conclusion
below holds with the two lists
kept separately fixed. The undifferentiated one-profile assertion uses $f=g=1$,
$\rho_1=\rho_2=\rho$, $M_1=M_2=M$, $C_m=1$ and
$\mu=C_{\rm prof}\sigma$.

In both versions assume integer depths $0\le a\le v\le N$,
$q=v-a\ge20\sigma N$ and $L\le q/2+\sigma N$,
where $L$ is the maximal nominal length across both endpoint lists.
For a prescribed $T>0$ choose
\[
P=T+2\mu+2,\qquad
J=\left\lfloor\frac{P+9}{10\sigma}\right\rfloor+1,
\qquad B_{\rm inv}=2(J-1).
\]
There are constants $C_T,N_T$, independent of the particular profile,
measure, cells, lists and depths, such that for every $N\ge N_T$ obeying
the stated scale conditions,
\[
2^{-a}\int_{2^{v-1}}^{2^{v+1}}|\widehat\nu_{f,g}(r)|^2\,dr
\le C_T\sup\HH_F^{X,Y}(a,v)+C_TR^{-T}.
\]
The supremum keeps the measures, model masses, cells, full numbered lists,
level and cutoff fixed. Its endpoint symbols have the form
\eqref{sym:family}. In each expansion term the combined derivative order
at both endpoints is at most $B_{\rm inv}$.

The constants may depend on $T,\kappa,\sigma,C_m,\mu$, the fixed geometry and cutoffs,
and the required common finite cutoff constants. They are uniform over all
objects obeying these bounds.
The conversion uses cutoff derivative bounds only through order $2J+4$.
For every subsequently requested fixed angular order $k$, all output
symbols satisfy \eqref{sym:derivatives} with common constants using cutoff
bounds through $B_{\rm inv}+k$. Requesting those extra numerical bounds
does not change the persistent derivative order $B_{\rm inv}$.
\end{proposition}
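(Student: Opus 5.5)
The plan is to expand $|\widehat\nu_{f,g}(r)|^2$ and reduce the shell integral to an angular energy by stationary phase in the distance variable. I write
\[
\widehat\nu_{f,g}(r)=\iint e^{-ir|x-y|}\,\Phi(x,y)\,d\rho_1|_X(x)\,d\rho_2|_Y(y),
\]
where $\Phi$ collects the weight $(2^a|x-y|)^{-1/2}$, the factors $f,\bar g$, the products of cutoffs $m_x(e)n_y(e)$, and the model-mass normalization. I rescale by $2^a$ so that $X,Y$ sit in fixed boxes with $d_0\le|x-y|\le d_1$, and the frequency becomes $\lambda\sim2^{q}$ with $q=v-a$. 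On the shell I insert the common bump $\Psi(r/2^v)$.

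The key identity is the Fourier representation of the circle measure. The planar oscillatory integral $\int_{S^1}e^{-i\lambda w\cdot z}\psi(w)\,dw$ has two stationary points, $w=\pm z/|z|$. Because $\psi$ equals one near every actual direction and vanishes near the antipodes, only one stationary point survives. Its asymptotic expansion is
\[
e^{-i\lambda|z|}(\lambda|z|)^{-1/2}\sum_{k<J}c_k(\lambda|z|)^{-k}\,\partial_\theta^{2k}(\text{amplitude at }e),
\]
with remainder $O((\lambda|z|)^{-J-1/2})$ times derivative bounds of order $2J+4$. I run this expansion in reverse: each term $e^{-ir|x-y|}|x-y|^{-1/2}\,m_x(e)n_y(e)$ is rewritten as $(2^q)^{1/2}\int_{S^1}e^{-i r w\cdot(x-y)}\psi(w)\,m_x(w)n_y(w)\,dw$ plus lower-order corrections. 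The corrections involve angular derivatives of the cutoffs, and a finite Neumann-type iteration absorbs them into symbols of the form \eqref{sym:family}, each with total derivative order at most $2(J-1)=B_{\rm inv}$.

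Each cutoff derivative costs $D_\ell\le 2^{L}R^{-50\sigma}$, while each step of the expansion gains $(2^q)^{-1}$. Since $L\le q/2+\sigma N$ and $q\ge 20\sigma N$, every step gains at least $R^{-10\sigma}$ net. So after $J$ steps the remainder is $R^{-10\sigma J}\le R^{-P-9}$, and this absorbs both $R^{2\mu}$ from the mass ratios $\rho_j/M_j\le C_mR^\mu$ and the list size $N^{3}$. The choice $P=T+2\mu+2$ then yields the $C_TR^{-T}$ error term.

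After this rewriting, $\widehat\nu_{f,g}(r)$ becomes a finite sum of terms
\[
\int_{S^1}\psi(w)\,U_X(r,w)\,\overline{V_Y(r,w)}\,dw,
\]
where $U_X(r,w)=\int e^{-irw\cdot x}a_x(w)\,d\rho_1|_X/M_1(a)$ and $V_Y$ is defined in the same way. Here the coefficients $c_k$ and the factors of $r$ are separated off as scalars bounded uniformly; this is what makes $C_T$ independent of $N$. Cauchy--Schwarz in $w$ and in the finite sum, followed by integration against $\Psi(r/2^v)\,dr/2^v$, produces exactly the shared-angle quantity $\HH_F^{X,Y}(a,v)$, with the $2^{-a}$ and $2^{q}$ normalizations matching its definition. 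I then take the supremum over the finitely many symbols produced.

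The main obstacle is the bookkeeping in the reversed stationary phase, for two reasons. First, the symbols must retain exactly the original numbered cutoffs at level $i$, with only derivatives applied and no new angular multiplier, while the geometric cutoff $\psi$ stays outside. Second, the gains must be verified uniformly in the list length $L$. I would handle this by Taylor-expanding the amplitude around $e$ in the variable $\theta$ and using the product rule across at most $N^3$ factors. With total order at most $B_{\rm inv}$, at most $B_{\rm inv}$ factors are differentiated, so the combinatorial count is polynomial in $N$ and is absorbed by the spare $R^{-\sigma}$ margin.
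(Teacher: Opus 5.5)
Your strategy matches the paper's. You use a finite-order stationary phase on the circle, with $\psi$ removing the antipodal critical point, and invert it by a Neumann-type recursion with constant-coefficient angular operators. The per-step gain $A^2/\Lambda\le R^{-10\sigma}$ comes from $L\le q/2+\sigma N$ and $q\ge20\sigma N$, and this margin absorbs the polynomial Leibniz count.

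The gap is at the step where you assert that $\widehat\nu_{f,g}(r)$ becomes a sum of shared-angle integrals $\int\psi U_X\overline{V_Y}\,dw$ with symbols in \eqref{sym:family}. The order-$j$ term of the inverse expansion carries $\Lambda^{-j}=(r|x-y|)^{-j}$. After multiplying by the weight one has
\[
(2^a|x-y|)^{-1/2}(r|x-y|)^{1/2-j}=2^{q/2}s^{1/2-j}\,2^{-jq}\,(2^a|x-y|)^{-j},\qquad r=2^vs .
\]
The power of $s$ can be pulled out as you say. But for every $j\ge1$ the factor $(2^a|x-y|)^{-j}$ is a joint function of $x$ and $y$. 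It is neither a scalar nor a spatial multiplier of a single endpoint. So these terms do not factor into an $x$-integral times a $y$-integral, and the resulting symbols are not of the admissible form $f(x)\prod(\cdots)$.

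The missing ingredient is a uniform separation of variables on the actual product support:
\[
(2^a|x-y|)^{-j}=\sum_\ell\gamma_{j,\ell}F_{j,\ell}(x)\overline{G_{j,\ell}(y)},\qquad |F_{j,\ell}|,|G_{j,\ell}|\le1,\qquad \sum_\ell|\gamma_{j,\ell}|\le C_j,
\]
with $C_j$ independent of cells, depths and $N$. The paper writes $x=c_X+2^{-a}u$, $y=c_Y+2^{-a}z$ and $h=2^a(c_X-c_Y)$. It multiplies $|h+u-z|^{-j}$ by a smooth distance cutoff equal to one on $[d_0,d_1]$ and vanishing near zero, and by box cutoffs. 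It then views the product as a smooth function on a fixed four-torus and uses the coefficient decay $(1+|k|^2+|l|^2)^{-3}$. The exponentials are absorbed into $f$ and $g$.

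The distance cutoff is what handles the root case $a=0$. There the enclosing boxes coincide, $h=0$, and no expansion about the center displacement is available. The resulting expansion is countable, so the last step must be Minkowski's inequality with this absolute coefficient bound, not Cauchy--Schwarz over finitely many terms.

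Two smaller corrections:
\begin{enumerate}
\item Do not apply Cauchy--Schwarz in $w$. That produces $\HH$ rather than the claimed $\HH_F$. You only need $\|A_\alpha\|_{L^2([1/2,2])}\le2^{-q}\HH_{F,\alpha}^{1/2}$ together with the band normalization $2^{-a}2^v2^q=2^{2q}$.
\item With your phase $e^{-irw\cdot x}$, your $U_X(r,\cdot)$ is $U_X(-r,\cdot)$ in the convention of \eqref{state:H}. So with the fixed cutoff $\psi$ you directly control only the negative band. The paper expands $\widehat\nu_{f,g}(-2^vs)$ and obtains the positive band from $\widehat\nu_{f,g}(r)=\overline{\widehat\nu_{\bar f,\bar g}(-r)}$; this is why complex multipliers $f,g$ are allowed.
\end{enumerate}
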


\begin{proof}
We give the pointwise inverse expansion and then verify its exact output
family, summation, signs and error normalization. Write $q=v-a$ and
$D=|x-y|$. The geometric hypothesis gives fixed numbers $d_0,d_1>0$ with
\[
d_0\le 2^aD\le d_1
\]
on the entire actual product support; for example one may use
$d_0=K/8-\sqrt2$ and $d_1=4K+\sqrt2$.

\paragraph{The finite-order Gaussian calculation.}
For a compactly supported smooth $a$ on a fixed interval, with
$\widehat a(\xi)=\int e^{-iu\xi}a(u)\,du$, Gaussian regularization gives
\begin{equation}\label{inv:gaussian}
\int e^{-i\Lambda u^2/2}a(u)\,du
=\frac{c_{\rm sp}}{2\pi}\Lambda^{-1/2}
 \int\widehat a(\xi)e^{i\xi^2/(2\Lambda)}\,d\xi,
\qquad c_{\rm sp}=\sqrt{2\pi}e^{-i\pi/4}.
\end{equation}
Indeed, after multiplying the inner oscillatory Gaussian by
$e^{-\epsilon u^2}$ its Fourier integral is
$\sqrt{\pi/(\epsilon+i\Lambda/2)}
\exp[-\xi^2/(4(\epsilon+i\Lambda/2))]$.
Its exponential has modulus at most one. Fourier inversion, Fubini for
$\epsilon>0$, and dominated convergence using $\widehat a\in L^1$
prove \eqref{inv:gaussian}.
For an integer $m\ge1$, Taylor's theorem on the real axis and
\[
\int |\xi|^{2m}|\widehat a(\xi)|\,d\xi
\le C_m\bigl(\|a\|_{L^1}+\|a^{(2m+2)}\|_{L^1}\bigr)
\]
show that its expansion through $j=m-1$ has remainder bounded by
$C_m\Lambda^{-m-1/2}
(\|a\|_1+\|a^{(2m+2)}\|_1)$.
The last Fourier estimate follows by using $|\widehat a|\le\|a\|_1$
for $|\xi|\le1$, and integrating by parts $2m+2$ times for $|\xi|>1$.
The coefficient of $\Lambda^{-j-1/2}$ is
$c_{\rm sp}(2i)^{-j}a^{(2j)}(0)/j!$.

\paragraph{Global constant-coefficient circle operators.}
For $e$ in the geometric arc define
\[
I_\Lambda(b;e)=\int_{S^1}e^{i\Lambda w\cdot e}\psi(w)b(w)\,dw.
\]
Use a fixed translated angular cutoff $\vartheta(\theta-\theta_e)$ supported
where $\psi=1$ and equal to one in a smaller neighborhood of $e$.
On this neighborhood set
\[
\theta-\theta_e=\phi(u)=2\arcsin(u/2),\qquad
w\cdot e=1-u^2/2,\qquad
\phi'(u)=(1-u^2/4)^{-1/2}.
\]
The local amplitude is
$a_e(u)=\vartheta(\phi(u))\phi'(u)b(\theta_e+\phi(u))$.
If $\|\partial_\theta^k b\|_\infty\le C_k A^k$, $A\ge1$, then
$\|a_e^{(k)}\|_\infty\le C'_k A^k$, on fixed support, uniformly in $e$.

For completeness, its even derivatives at zero can be identified without
any differentiation of asymptotic remainders. For an indeterminate $z$ put
$h_z(u)=\phi'(u)e^{z\phi(u)}$. Direct differentiation gives
\[
(1-u^2/4)h_z''-(3u/4)h_z'-(z^2+1/4)h_z=0.
\]
Differentiating $n$ times and evaluating at zero gives
\[
h_z^{(n+2)}(0)=\bigl(z^2+(n+1)^2/4\bigr)h_z^{(n)}(0).
\]
Since $h_z(0)=1$, the even derivative of order $2j$ is
$\prod_{k=0}^{j-1}(z^2+(k+1/2)^2)$.
The derivative of $\phi'(u)b(\theta_e+\phi(u))$ of order $2j$ at zero
is a constant-coefficient differential operator of order at most $2j$
applied to $b$ at $e$. Testing it on $e^{z\theta}$ proves that its
polynomial is the one just computed. The cutoffs do not enter this
coefficient because both equal one near $e$.
Consequently define the global operators
\begin{equation}\label{inv:Lj}
L_0=c_{\rm sp}\operatorname{Id},\qquad
L_j=\frac{c_{\rm sp}}{(2i)^j j!}
\prod_{k=0}^{j-1}\bigl(\partial_\theta^2+(k+1/2)^2\bigr)
\quad(j\ge1).
\end{equation}

The difference $\psi-\vartheta(\theta-\theta_e)$ vanishes on a uniform
neighborhood of $e$, where both cutoffs equal one, and on a uniform
neighborhood of $-e$, where both vanish. These are the only two critical
directions of $\cos(\theta-\theta_e)$. Its derivative is therefore uniformly
separated from zero on the support of that difference. Performing
$m+1$ integrations by parts there gives a bound
$C_m\Lambda^{-m-1}A^{m+1}$. This uses fixed smooth reciprocal phase
coefficients, no spatial derivatives and no boundary terms.
Combining this bound with \eqref{inv:gaussian} yields, for $\Lambda,A\ge1$,
\begin{equation}\label{inv:direct}
I_\Lambda(b;e)=\Lambda^{-1/2}
\left[e^{i\Lambda}\sum_{j=0}^{m-1}\Lambda^{-j}(L_jb)(e)
       +\mathcal R_m\right],
\qquad |\mathcal R_m|\le C_m\Lambda^{-m}A^{2m+4}.
\end{equation}
Only amplitude derivatives through order $2m+4$ are required. All constants
are common to amplitudes with the specified numerical derivative bounds.
This is a finite-order uniform estimate, including scale-dependent $b$.

\paragraph{Finite inversion.}
Define constant-coefficient operators
\[
B_0=L_0^{-1},\qquad
B_j=-L_0^{-1}\sum_{h=1}^j L_hB_{j-h}.
\]
They have order at most $2j$. For $0\le n<J$, the identities needed
below are $\sum_{h+j=n}L_hB_j=\operatorname{Id}$ when $n=0$ and zero
when $1\le n<J$.
Apply \eqref{inv:direct} to $B_jb$ with $m=J-j$, multiply by
$\Lambda^{-j}$ and sum over $0\le j<J$. Since
$\|\partial_\theta^k B_jb\|_\infty\le C_{j,k}A^{k+2j}$, every normalized
remainder is bounded by $C_J\Lambda^{-J}A^{2J+4}$.
All principal orders except zero cancel. We obtain
\begin{equation}\label{inv:finite-inverse}
e^{i\Lambda}b(e)
=\Lambda^{1/2}\sum_{j=0}^{J-1}\Lambda^{-j}I_\Lambda(B_jb;e)
 +O_J\bigl(A^4(A^2/\Lambda)^J\bigr).
\end{equation}
The highest derivative used is
$2j+2(J-j)+4=2J+4$, not an order increasing with the number of later
recursive transitions.

\paragraph{Scale estimates for the actual products.}
Put $b_{x,y}=m_xn_y$ for the two products in the statement.
Number all occurrences at both endpoints separately. Let
$n=\max(1,\#\mathcal A_1+\#\mathcal A_2)$ and
\[
D_{\max}=\max(1,2^L R^{-50\sigma}),\qquad A=R^\sigma D_{\max}.
\]
Fix a common threshold so that $2N^3\le R^\sigma$.
At derivative order $k$, at most $k$ factors are differentiated in each
Leibniz term; the remaining factors are bounded by one. The sum of its
multinomial coefficients is at most $n^k$. It follows that
\[
\|b_{x,y}\|_\infty\le1,\qquad
\|\partial_\theta^k b_{x,y}\|_\infty\le C_k(nD_{\max})^k\le C_k A^k.
\]
Here $C_k$ uses only finitely many fixed cutoff constants, not a product of
one numerical constant for every undifferentiated factor.

For $r=2^vs$, $s\in[1/2,2]$, let $\Lambda=rD$. The hypotheses give
\begin{gather}\label{inv:scale}
\Lambda\ge(d_0/2)2^q,\qquad
A\le\max(R^\sigma,2^{q/2}R^{-48\sigma}),\notag\\
\frac{A^2}{\Lambda}\le\frac2{d_0}
 \max(R^{2\sigma}2^{-q},R^{-96\sigma})
 \le\frac2{d_0}R^{-18\sigma}.
\end{gather}
Increase the common threshold so that
$R^{8\sigma}\ge\max(1,2/d_0)$. Then
$\Lambda\ge R^{10\sigma}$, $A^2/\Lambda\le R^{-10\sigma}$ and $A\le R^2$.
Our choice of $J$ makes the remainder of \eqref{inv:finite-inverse}
uniformly $O(R^{-P})$ at every actual pair and every $s\in[1/2,2]$.

\paragraph{Uniform absolutely summable spatial separation.}
For each fixed $j\ge0$ there is an expansion
\begin{equation}\label{inv:spatial}
(2^a|x-y|)^{-j}=\sum_\ell\gamma_{j,\ell}
 F_{j,\ell}(x)\overline{G_{j,\ell}(y)},\qquad
|F_{j,\ell}|,|G_{j,\ell}|\le1,\qquad
\sum_\ell|\gamma_{j,\ell}|\le C_j,
\end{equation}
absolutely and uniformly on the actual product support.
For $j=0$ take one constant term. For $j>0$, write
$x=c_X+2^{-a}u$, $y=c_Y+2^{-a}z$, $h=2^a(c_X-c_Y)$.
The variables $u,z$ lie in fixed boxes. Multiply
$|h+u-z|^{-j}$ by a fixed smooth distance cutoff equal to one on
$[d_0,d_1]$ and supported in a compact subinterval of $(0,\infty)$,
and by fixed smooth endpoint cutoffs equal to one on the boxes.
The resulting function is smooth on the full four-dimensional product,
with uniform derivatives, even if the enclosing boxes are not separated.
For coincident root boxes, $h=0$; the distance cutoff makes the extension
identically zero near $u=z$, while it agrees with the desired factor on
the actual product support. Thus coincident boxes introduce no singularity
or measure-dependent constant. It vanishes near the boundary of a larger fixed box, so it defines a
smooth function on a fixed four-torus. Applying $(1-\Delta_{u,z})^3$
in its Fourier coefficient integral gives the uniform bound
\[
|\gamma_{j,k,l}|\le C_j(1+|k|^2+|l|^2)^{-3},\qquad k,l\in\mathbb Z^2.
\]
This is summable in four dimensions. Fourier inversion gives an absolutely
uniformly convergent series. Split each exponential between the two spatial
variables, choosing the opposite sign in $G$ so that the product is
$F\overline G$. This proves \eqref{inv:spatial}. There is no angular or
radial dependence in these factors, and no regularity of $\rho$ is used.

\paragraph{Exact retained-factor output and coefficient sums.}
Multiply \eqref{inv:finite-inverse} by the distance weight. The exact identity is
\begin{equation}\label{inv:weight}
(2^aD)^{-1/2}(rD)^{1/2-j}
=2^{q/2}s^{1/2-j}2^{-jq}(2^aD)^{-j}.
\end{equation}
Write $B_j=\sum_{d=0}^{2j}\beta_{j,d}\partial_\theta^d$.
The Leibniz rule at degree $d$ gives
\[
\partial_\theta^d\prod_{t=1}^{\#\mathcal A_1+\#\mathcal A_2}w_t
=\sum_{|\alpha|=d}\frac{d!}{\prod_t\alpha_t!}
 \left(\prod_tD_{\ell_t}^{\alpha_t}\right)
 \prod_t\left(D_{\ell_t}^{-\alpha_t}
                       \partial_\theta^{\alpha_t}w_t\right).
\]
Every original numbered cutoff occurs in every term, including when its
assigned derivative order is zero. Split the last product into its two
endpoints. Its combined derivative order is at most $2j\le B_{\rm inv}$.
Insert \eqref{inv:spatial}; $fF$ and $gG$ are admissible spatial multipliers.
All normalized differentiated cutoffs are real. Thus the second endpoint
fits exactly into $\overline{V_Y}$, while complex operator and Fourier
coefficients are scalar coefficients outside the angular integral.
The fixed cutoff $\psi$ also stays outside the endpoint symbols.
This proves, in particular, that the exact family has no extra $q(w)$.

The coefficient sum can be bounded without a polynomial loss. Put
$S=\max(1,\sum_tD_{\ell_t})\le nD_{\max}\le A$.
For a nonempty list the sum of all scale-weighted Leibniz coefficients
at degree $d$ is exactly $(\sum_tD_{\ell_t})^d$; for an empty list positive
derivatives vanish and the same upper bound with $S=1$ applies.
Since $d\le2j$ and $S\ge1$, the absolute coefficient mass of inverse
order $j$, including the spatial series, is bounded by
\[
C_j\,2^{-jq}S^{2j}\sup_{s\in[1/2,2]}|s^{1/2-j}|
\le C'_j(2^{-q}A^2)^j\le C'_j.
\]
The last inequality follows directly from the second line of
\eqref{inv:scale} before division by the geometric factor $d_0/2$:
$2^{-q}A^2\le R^{-18\sigma}$.
The finite sum over $j<J$ is a constant. Every fixed number of radial
$s$-derivatives has the same conclusion, since it acts only on
$s^{1/2-j}$.

Now multiply by $f(x)\overline{g(y)}$ and integrate against
$d\rho_1|_X(x)d\rho_2|_Y(y)/(M_1(a)M_2(a))$.
Accordingly, with $A_\alpha(s)$ denoting the shared-angle integral formed
from the output endpoint symbols, we have the representation
\begin{equation}\label{inv:representation}
\widehat\nu_{f,g}(-2^vs)
=2^{q/2}\sum_\alpha c_\alpha(s)A_\alpha(s)+E(2^vs),
\qquad
\sum_\alpha\|c_\alpha\|_{C^k([1/2,2])}\le C_{T,k}
\end{equation}
for every fixed $k$. The index set is countable only because of the spatial
Fourier series. Each endpoint symbol is independent of $s$ and $r$.
All integrations and series interchanges are justified by the absolute
coefficient bound, the common zero-order symbol bound and the finite
model-normalized masses. No selection of a maximizing symbol is needed.

For any subsequently fixed angular order $k$, product differentiation of
an output symbol gives a sum whose scale-weighted coefficient mass is at
most $(\sum D_{\ell_t})^k\le A^k$. At most $B_{\rm inv}+k$ factors have
positive derivative order, so their numerical constants are bounded by a
constant depending only on cutoff bounds through $B_{\rm inv}+k$.
This proves the asserted common output seminorms. Each derivative of
an individual cutoff has support contained in the support of that cutoff.
Hence all original good-test support implications are retained, also for
further derivatives of output symbols. This uses the cutoff support inclusion
already proved; it does not require good sets themselves to be closed.

\paragraph{Measure error, band normalization and both signs.}
The pointwise inverse error is $O(R^{-P})$. Its weight is at most
$d_0^{-1/2}$ and its model-normalized product mass is at most $C_m^2R^{2\mu}$.
Thus $|E(2^vs)|\le C_TR^{-P+2\mu}$, and
\[
2^{-a}\int_{2^{v-1}}^{2^{v+1}}|E(r)|^2\,dr
\le C_T2^qR^{-2P+4\mu}
\le C_TR^{1-2P+4\mu}=C_TR^{-2T-3}.
\]
No radial derivative of this error is asserted or used.
By the definition of the fixed common radial cutoff,
$\|A_\alpha\|_{L^2([1/2,2])}
\le2^{-q}(\HH_{F,\alpha})^{1/2}$.
Minkowski's inequality and \eqref{inv:representation} now give
\[
\left\|\sum_\alpha c_\alpha A_\alpha\right\|_{L^2([1/2,2])}
\le C_T2^{-q}\sup_\alpha(\HH_{F,\alpha})^{1/2}.
\]
The band normalization is exactly
$2^{-a}2^v(2^{q/2})^2=2^{2q}$.
Combining the last two displays proves the required negative-band bound.
For complex input, the exact identity is
$\overline{\nu_{f,g}}=\nu_{\overline f,\overline g}$, whence
\[
\widehat\nu_{f,g}(r)=
\overline{\widehat\nu_{\overline f,\overline g}(-r)}.
\]
Apply the negative-band estimate to the conjugated input multipliers,
which are allowed in the same family with the same constants. This gives
the positive-band estimate. In the undifferentiated application $f=g=1$,
$\nu_{f,g}=\zeta$ is positive and same-input conjugate symmetry also holds.

All thresholds used above are common: the buffered-cutoff construction
threshold, $2N^3\le R^\sigma$, the fixed geometric inequalities in
\eqref{inv:scale}, and the fixed finite-order smoothness assumptions.
The high derivative constants enter $C_T$ and output seminorm constants;
they do not introduce a new fixed multiple of $\sigma$ in an exponent.
This completes the proof.
\end{proof}

\begin{remark}
The proof uses the cutoffs of Lemma~\ref{def:masks}: they are real,
take values in $[0,1]$, have common normalized angular derivative bounds,
and satisfy $\supp w_{i,\mathfrak t}\subset G_{i+1,\mathfrak t}$.
The geometric windows in the tests are closed inequalities, but the good
sets themselves need not be closed. The convolution construction provides
the required support inclusion in either case.
\end{remark}

\section{The graph estimate}
\label{graph:section}

We prove the graph estimate of \cite{source} for the symbols
\eqref{sym:family}, allowing the two different class measures used in
Section~\ref{root:section}. The proof keeps track of the derivative bounds
and the error from nonstationary terms.

The graph expresses the conditions under which two terms of the expanded
integral can interact. The internal displacements must have small components perpendicular to
their angular directions, and their signed projections must nearly cancel.
The directional test bounds the possible cells for one displacement; the
phase conditions then locate the cell for the other. Bounding the number
of neighbors on both sides of the graph allows Cauchy--Schwarz to be used
before summing over directions. No separation between the two angular
variables is required.

\subsection{Statement and uniform data}
For $j=1,2$, let $\rho_j$ be a probability on a unit dyadic root cell.
Suppose its occupied depth-$n$ cells satisfy
\[
 M_j(n)R^{-C_{\rm prof}\sigma}\le\rho_j(P)
 \le M_j(n)R^{C_{\rm prof}\sigma},\qquad
 M_j(n)=R^{-m_j(n/N)},
\]
where $m_j(0)=0$ and $m_j:[0,1]\to[0,2]$ is nondecreasing and
$2$-Lipschitz. Write
\[
 A_{j,n}=m_j(n/N)-n/N,\quad
 I_j(a,p)=m_j(p/N)-m_j(a/N),\quad
 k_j(g,p)=A_{j,p}-\min_{g\le n\le p}A_{j,n}.
\]
The laws and their profiles may coincide. The constant $C_{\rm prof}$ is
one common upper bound for the two profile constants.

At endpoint $j$, retain a fixed list $\mathcal A_j$ with
$\#\mathcal A_j\le N^3$ and a fixed smoothing level $i_j<J_0$. Use
the already constructed cutoffs of that class, test index, and level, without
modifying them. The admissible symbol is exactly
\begin{equation}\label{graph:exact-symbol}
 b_z^{(j)}(w)=f_j(z)
 \prod_{\mathfrak t\in\mathcal A_j}
 D_{\ell(\mathfrak t)}^{-\alpha_{j,\mathfrak t}}
 \partial_\theta^{\alpha_{j,\mathfrak t}}
 w^{(j)}_{i_j,\mathfrak t}(z,w),
 \quad |f_j|\le1,
 \quad\sum_{\mathfrak t}\alpha_{j,\mathfrak t}\le B.
\end{equation}
Here $B$ is fixed before $N$ and $f_j$ is bounded Borel measurable and
independent of $w$ and $r$. Complex conjugates of these symbols are permitted.
Every original numbered cutoff occurs once, possibly with derivative order
zero. The cutoffs are real, so conjugation only changes the spatial multiplier.
Let $L$ be the maximum length in the two lists, and put
\[
 D=\max(1,2^LR^{-50\sigma}),\qquad D_*=R^\sigma D.
\]

Let $X_1=X$, $X_2=Y$ be occupied depth-$a$ cells, with no separation
assumption. Define
\[
 U_X(r,w)=M_1(a)^{-1}\int_Xe^{irw\cdot x}b_x^{(1)}(w)\,d\rho_1(x),
 \quad
 V_Y(r,w)=M_2(a)^{-1}\int_Ye^{irw\cdot y}b_y^{(2)}(w)\,d\rho_2(y),
\]
and define $\HH^{X,Y}(a,v)$ by \eqref{state:H}, with these two
transforms and the same fixed nonnegative $\Psi$. Child transforms use
$M_1(p),M_2(p)$ and the restrictions of the identical symbols to their
depth-$p$ cells.

\begin{proposition}
\label{compat:graph}
Suppose $0\le a<p\le v\le N$ and
\begin{equation}\label{graph:scale-hypotheses}
 p-a\le v-p+2\sigma N,\qquad
 L\le v-p+2\sigma N,\qquad v-p\ge10\sigma N.
\end{equation}
At endpoint $j$, suppose the list retains a parallel test index $[g_j,p]$
with
\begin{equation}\label{graph:anchor-hypotheses}
 a\le g_j<p,\qquad g_j\le a+2\sigma N.
\end{equation}
Its support therefore imposes the parallel test for its own class at
level $i_j+1$. For every fixed $B$ and $T>0$, there are common finite
constants $C,C_T,N_0$ such that for every $N\ge N_0$ and all the data
above,
\begin{align}
 \HH^{X,Y}(a,v)
 &\le C\,2^{2(p-a)}R^{-2(I_1(a,p)+I_2(a,p))}
 R^{\min_jk_j(a,p)+(C_{\rm test}+64)\sigma}\notag\\
 &\qquad{}\times
 \sum_{\substack{P\subset X,\ Q\subset Y\\
                  P,Q\ \text{occupied at depth}\ p}}
                  \HH^{P,Q}(p,v)
 +C_TR^{-T},
 \label{graph:sum-bound}\\
 \HH^{X,Y}(a,v)
 &\le C R^{-\sum_{j=1}^2(A_{j,p}-A_{j,a})
                  +\min_jk_j(a,p)+C_G\sigma}\notag\\
 &\qquad{}\times
 \max_{\substack{P\subset X,\ Q\subset Y\\
                  P,Q\ \text{occupied at depth}\ p}}
                  \HH^{P,Q}(p,v)
 +C_TR^{-T},
 \label{graph:max-bound}
\end{align}
where one may take the structural constant
$C_G=C_{\rm test}+100+4C_{\rm prof}$.
Every child retains every original cutoff factor, including those anchored
at $p$, at its original smoothing level and derivative order. The proof
does not add an angular multiplier or a radial dependence to any child.

The structural coefficient $C_G$ is independent of $B,T$ and all
integration-by-parts orders. The fixed constants and $N_0$ may depend on
the fixed parameters, $B,T$, the fixed cutoffs, and finitely many common
numerical cutoff derivative constants. They are independent of the actual
laws, profiles, cells, spatial multipliers, and admissible symbols. In
particular $X$ and $Y$ may coincide or be neighboring cells.
\end{proposition}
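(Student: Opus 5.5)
We expand $\HH^{X,Y}(a,v)$ by its definition \eqref{state:H}: a radial integral against $\Psi(r/2^v)\,dr/2^v$, with normalization $2^{2(v-a)}$, of the shared-angle product of $U_X(r,w)$ and $\overline{V_Y(r,w)}$. First decompose $X$ and $Y$ into their occupied depth-$p$ children, $U_X=\sum_P (M_1(p)/M_1(a))U_P$ and $V_Y=\sum_Q (M_2(p)/M_2(a))V_Q$. The child symbols are the unchanged symbols \eqref{graph:exact-symbol} restricted to $P$ and $Q$, so every numbered cutoff, level and derivative order persists. The resulting double sum over pairs $(P,P')$ and $(Q,Q')$ is a bilinear form. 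In each term, write $x-x'=2^{-p}$-scale displacements from the cell centres and integrate by parts in $r$ over the band $r\sim2^v$ and in $\theta$. Since symbols vary at angular scale $D^{-1}\ge 2^{-L}R^{50\sigma}$ and $L\le v-p+2\sigma N$, each nonstationary integration gains $R^{-\sigma}$. After $O_{T,B}(1/\sigma)$ steps, the terms where the phase $r w\cdot[(c_P-c_{P'})-(c_Q-c_{Q'})]$ is non-stationary contribute $C_TR^{-T}$; the admissible symbol family keeps these derivatives within the fixed bound. The surviving interaction pairs form a graph. Writing $\Delta_1=c_P-c_{P'}$ and $\Delta_2=c_Q-c_{Q'}$, an edge requires the components of $\Delta_1,\Delta_2$ perpendicular to $w$ to be $\lesssim 2^{-v}R^{C\sigma}$ at some direction $w$ in the common angular support, and the signed projections to nearly cancel.

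Next bound the degrees of this graph. Fix $P$. Because the symbol at $x$ retains the parallel test $[g_1,p]$ with level-$(i_1+1)$ support, every $w$ in the support is good. The occupied depth-$p$ centres $c_{P'}$ in $B(c_P,30\cdot2^{-g_1})$ within the width $W(p)\ge 2^{-v}R^{C\sigma}$ of the line $c_P+\R w$ therefore number at most $R^{k_1(a,p)+C_{\rm test}\sigma}$. We use $g_1\le a+2\sigma N$ and the Lipschitz profile to pass from $k_1(g_1,p)$ to $k_1(a,p)$ at cost $O(\sigma)$. The hypothesis $p-a\le v-p+2\sigma N$ ensures the $2^{-a}$-cell fits in the test ball and that angular uncertainty $2^{-(v-p)}$ keeps the perpendicular tolerance inside $W$. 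Once $P,P',Q$ are fixed, the cancellation condition on projections determines $Q'$ up to $R^{O(\sigma)}$ choices. We apply Schur/Cauchy--Schwarz with the endpoint having the smaller $k_j$ on the counting side; this produces $\min_jk_j(a,p)$. Summing and pairing by Cauchy--Schwarz, $|U_PV_Q\overline{U_{P'}V_{Q'}}|\le\frac12(|U_PV_Q|^2+|U_{P'}V_{Q'}|^2)$, gives the sum bound \eqref{graph:sum-bound}. The prefactor collects $(M_j(p)/M_j(a))^2=R^{-2I_j(a,p)}$, the normalization change $2^{2(p-a)}$, and the constant $(C_{\rm test}+64)\sigma$.

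For \eqref{graph:max-bound}, bound the sum by the number of occupied child pairs times the maximum. The counts are $R^{I_j(a,p)+2C_{\rm prof}\sigma}$ by \eqref{mass:count-ball}. The normalization identity \eqref{def:identity} converts $2^{2(p-a)}R^{-2(I_1+I_2)}R^{I_1+I_2}$ into $R^{-\sum_j(A_{j,p}-A_{j,a})}$ with $2^{-2(p-a)}$ absorbed; one power of $2^{2(p-a)}$ is shared between endpoints via $I_j-(p-a)/N=A_{j,p}-A_{j,a}$. This gives $C_G=C_{\rm test}+100+4C_{\rm prof}$.

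The main obstacle is the degree count in the graph. The test controls collinear centres only within the ball of radius $30\cdot2^{-g}$ and only at the width $W_{i+1}(p)$, so we must check that every surviving edge satisfies both constraints uniformly, with no separation of $X$ and $Y$ and with the shared direction $w$ ranging over a support rather than being fixed. I would first localize $w$ to arcs of length $2^{-(v-p)}R^{\sigma}$ by a smooth partition, which is permitted as a scalar decomposition outside the symbols. On each arc I would then run the count with a representative good direction, using the buffered support inclusion of Lemma~\ref{def:masks}.
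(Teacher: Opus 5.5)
\textbf{There is a genuine gap in the degree count, and the setup needs correcting first.}

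\textbf{The setup.} $\HH^{X,Y}(a,v)$ in \eqref{state:H} is a product of two \emph{independent} angular energies, $\int|U_X(r,w_1)|^2dw_1\cdot\int|V_Y(r,w_2)|^2dw_2$. It is not a shared-angle product; that quantity is $\HH_F$. After the depth-$p$ splitting, the phase is $r(w_1\cdot\Delta_1+w_2\cdot\Delta_2)$ with $w_1,w_2$ independent. The two separate angular integrations by parts are what give the two separate perpendicular constraints you later use. With your single-angle phase $rw\cdot(\Delta_1-\Delta_2)$, only $w^\perp\cdot(\Delta_1-\Delta_2)$ would be controlled, and $|U_PV_Q|^2$ would not integrate to the child energy $\HH^{P,Q}(p,v)$. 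Caps must therefore be inserted in each angle, using $\sum_{\nu_1,\nu_2}\eta_{\nu_1}(w_1)\eta_{\nu_2}(w_2)=1$ to avoid a cap-count loss.

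The perpendicular tolerance also cannot be $2^{-v}R^{C\sigma}$. In $\theta_1$ the amplitude has derivatives of size $1+D_*+\delta^{-1}$, with $D_*$ up to $2^{v-p}R^{-47\sigma}$. The phase has higher derivatives of size $2^{v-a}$. So a term failing the test at tolerance $\tau$ gains only $(1+D_*+\delta^{-1}+2^{-a}/\tau)/(2^v\tau)$ per integration. This is $\le CR^{-26\sigma}$ for $\tau=2^{-p}R^{30\sigma}$, and this is where the scale hypotheses \eqref{graph:scale-hypotheses} actually enter. Replacing points by cell centres costs $2^{-p}$ in any case. Since $W_i(p)\ge2^{-p}R^{500\sigma}$, the corrected tolerance still fits inside the test width.

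\textbf{The main gap.} You count the unknown $P'$ using the parallel test at the fixed row cell $P$, claiming that $p-a\le v-p+2\sigma N$ puts the depth-$a$ cell inside the test ball. It does not. The ball has radius $30\cdot2^{-g_1}$, the candidates range over $X$ of diameter $\sqrt2\,2^{-a}$, and $g_1$ may be as large as $a+2\sigma N$. The scale hypothesis has nothing to do with this. Candidates outside $B(c_P,30\cdot2^{-g_1})$ are simply not counted by $P$'s test.

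The missing idea has three parts.
\begin{itemize}
\item Group the candidates by their depth-$g_1$ ancestors. There are at most $2^{2(g_1-a)}\le R^{4\sigma}$ groups, and this is the real use of $g_1\le a+2\sigma N$. Passing from $k_1(g_1,p)$ to $k_1(a,p)$ costs nothing, since $[g_1,p]\subseteq[a,p]$.
\item In each nonempty group, apply the test not at $P$ but at an \emph{eligible candidate} $P_*$ of that group. Use a witness $(z_*,w_*)$ with $z_*\in P_*$, $w_*$ in the cap and $b^{(1)}_{z_*}(w_*)\neq0$. The test ball of $P_*$ contains the whole group, and every candidate lies within $C(2\tau+2^{-g_1}\delta)\le C2^{-p}R^{30\sigma}<W_{i_1+1}(p)$ of $c_{P_*}+\R w_*$.
\item This requires the notion of eligible (cell, cap) incidences, with non-eligible incidences contributing zero terms.
\end{itemize}
Without this, the degree bound behind \eqref{graph:sum-bound} is not established. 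Your later normalization bookkeeping for \eqref{graph:max-bound} is correct once the sum bound is available.
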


The following subsections prove the proposition.

\subsection{Derivative bounds and eligible incidences}
The required finite derivative bounds follow directly from the exact
symbol family. For one symbol, after $k$ further angular derivatives,
at most $B+k$ factors have positive derivative order. The sum of the
Leibniz coefficients is at most $(\#\mathcal A_j)^k$. A factor whose
original order was $\alpha$ and whose further order is $s$ has bound
\[
 D_\ell^{-\alpha}
 \bigl|\partial_\theta^{\alpha+s}w_{i,\mathfrak t}\bigr|
 \le C_{\alpha+s}D_\ell^s.
\]
All undifferentiated factors are bounded by one. Thus, uniformly in $z$,
\[
 |\partial_\theta^k b_z^{(j)}|
 \le C_{B,k}(N^3D)^k,
 \qquad |b_z^{(j)}|\le C_B.
\]
The product $b_z^{(j)}\overline{b_{z'}^{(j)}}$ obeys the same estimate
with $2N^3$ in place of $N^3$ and a changed fixed constant. Take $N$
large enough that
\begin{equation}\label{graph:list-absorption}
 2N^3\le R^\sigma.
\end{equation}
Then every such product has $k$-th derivative bounded by
$C_{B,k}D_*^k$. This single threshold works for every finite requested
derivative order; the numerical constants depend on that order. No
spatial derivative is used. Since differentiation does not enlarge a
smooth cutoff's support, a nonzero symbol still imposes every retained
cutoff's test at level $i_j+1$. No closedness assertion about a good set
is needed.

Set
\begin{equation}\label{graph:local-scales}
 h=p-a,\qquad q=v-p,\qquad
 \delta=\min(c_0,2^{-h}R^{10\sigma}),\qquad
 \tau=2^{-p}R^{30\sigma},
\end{equation}
where $c_0>0$ is one fixed small numerical constant. Choose a smooth
nonnegative circle partition of unity $\{\eta_\nu\}$ with bounded
overlap, cap centers $w_\nu$, and
\begin{equation}\label{graph:cap-partition}
 \sum_\nu\eta_\nu=1,\qquad
 \supp\eta_\nu\subset B_{S^1}(w_\nu,C\delta),\qquad
 \|\partial_\theta^k\eta_\nu\|_\infty\le C_k\delta^{-k},
 \qquad \#\{\nu\}\le C\delta^{-1}.
\end{equation}
Such a partition is obtained by placing equally spaced translates of a
fixed bump and dividing by their sum.

For an occupied depth-$p$ cell $P$ of class $j$, call $(P,\nu)$ eligible
when
\begin{equation}\label{graph:eligibility}
 \int_P\int_{S^1}\eta_\nu(w)|b_z^{(j)}(w)|^2\,dw\,d\rho_j(z)>0.
\end{equation}
On the fixed spatial null set where hypotheses were not specified, set
the entire symbol to zero. If \eqref{graph:eligibility} is zero, Fubini
implies that for almost every $w$ with $\eta_\nu(w)>0$ the symbol
vanishes for $\rho_j$-almost every $z\in P$. Its transform therefore
vanishes for every $r$ at all those angles, and the corresponding cap
terms have zero integral. If the incidence is eligible, there is a
point-angle witness $(z_*,w_*)$ with $z_*\in P$, $\eta_\nu(w_*)>0$,
and $b_{z_*}^{(j)}(w_*)\ne0$, where the support hypotheses hold.
Only finitely many witnesses are chosen at a fixed $N$; neither
measurable selection nor a lower mass bound for a witness is required.

\subsection{Uniform removal of nonstationary whole terms}
Split the endpoint transforms exactly as
\begin{equation}\label{graph:exact-splitting}
 U_X=\lambda_1\sum_{P\subset X}U_P,\qquad
 V_Y=\lambda_2\sum_{Q\subset Y}V_Q,
 \qquad \lambda_j=M_j(p)/M_j(a)=R^{-I_j(a,p)}.
\end{equation}
Expand the two squares in the energy and insert
\eqref{graph:cap-partition} in each independent angle. Index a term
by its row $(P,Q)$, its column $(P',Q')$, and its two caps. With the
four spatial points fixed, put
\[
 \Delta_1=x-x',\quad\Delta_2=y-y',\qquad
 d_1=c_P-c_{P'},\quad d_2=c_Q-c_{Q'}.
\]
The phase is
\begin{equation}\label{graph:phase}
 \Phi=r(w_1\cdot\Delta_1+w_2\cdot\Delta_2).
\end{equation}
Both displacements are internal to their own depth-$a$ cells, so
\begin{equation}\label{graph:internal-size}
 |\Delta_j|+|d_j|\le C2^{-a},\qquad
 |\Delta_j-d_j|\le C2^{-p}.
\end{equation}
This is the only spatial geometry needed for nonstationarity; the
relative positions of $X$ and $Y$ play no role.

Fix cap centers $w_1^0,w_2^0$. Retain a whole row-column term precisely
when all four incidences are eligible and
\begin{equation}\label{graph:edge-tests}
 |(w_j^0)^\perp\cdot d_j|\le\tau\quad(j=1,2),\qquad
 |w_1^0\cdot d_1+w_2^0\cdot d_2|\le\tau.
\end{equation}
Here $w^\perp$ is the unit tangent obtained by rotating $w$ through
$\pi/2$. These decisions depend only on cell labels and cap centers;
they never restrict an angular integral by a nonsmooth pointwise test.

Changing centers to spatial points and cap centers to angles in their
supports changes every expression in \eqref{graph:edge-tests} by at
most
\begin{equation}\label{graph:test-perturbation}
 C(2^{-p}+2^{-a}\delta)
 \le C2^{-p}R^{10\sigma}=o(\tau).
\end{equation}
The bound also holds on a fixed slightly larger cap. When
$\delta=c_0$, its defining inequality gives
$c_0\le2^{-h}R^{10\sigma}$, so the same calculation applies.

If the scalar test fails, \eqref{graph:test-perturbation} gives
$|w_1\cdot\Delta_1+w_2\cdot\Delta_2|\ge\tau/2$ throughout the term.
In the variable $s=r/2^v$, repeated integration by parts differentiates
only the fixed radial cutoff $\Psi(s)$, because the symbols and caps
are independent of $s$. Each integration gains $(2^v\tau)^{-1}$,
and
\begin{equation}\label{graph:oscillation-scale}
 2^v\tau=2^qR^{30\sigma}\ge R^{40\sigma}.
\end{equation}
This gives any prescribed power decay for the integrated term at
fixed spatial points.

If a perpendicular test fails, say the first, lift its small cap to
an angular interval. For all $s\in\supp\Psi$ and both cap angles,
\[
 |\partial_{\theta_1}\Phi|\ge c2^v\tau=:F_0,
 \qquad
 |\partial_{\theta_1}^k\Phi|\le C_k2^{v-a}=:C_kG_0
 \quad(k\ge1).
\]
The amplitude derivatives in this variable are at most $C_{B,k}H^k$,
where $H=1+D_*+\delta^{-1}$. Write
$A_{m+1}=-\partial_{\theta_1}(A_m/(i\partial_{\theta_1}\Phi))$.
The reciprocal differentiation formula gives
\[
 \left|\partial_{\theta_1}^k
              (\partial_{\theta_1}\Phi)^{-1}\right|
 \le C_kF_0^{-1}(1+G_0/F_0)^k.
\]
An induction using the product rule consequently gives
\[
 |\partial_{\theta_1}^k A_m|
 \le C_{B,m,k}F_0^{-m}
       (H+1+G_0/F_0)^{m+k}.
\]
The cap cutoff has support in the interior of the lifted interval,
and the lower bound for the phase derivative holds on a neighborhood
of that support. Thus all integrations by parts have zero boundary
terms. Integrating also in the other angle and in $s$ yields
\begin{equation}\label{graph:ibp-estimate}
 C_{B,m}\left(
 \frac{1+D_*+\delta^{-1}+2^{-a}/\tau}{2^v\tau}
 \right)^m.
\end{equation}
The identical argument treats failure of the other perpendicular test.

The scale hypotheses imply
\begin{align*}
 D_*&\le R^\sigma+2^qR^{-47\sigma},\\
 \delta^{-1}&\le c_0^{-1}+2^qR^{-8\sigma},\\
 2^{-a}/\tau&=2^hR^{-30\sigma}\le2^qR^{-28\sigma}.
\end{align*}
Together with $2^q\ge R^{10\sigma}$, these bounds show that the ratio
in \eqref{graph:ibp-estimate} is at most $CR^{-26\sigma}$.
Hence every discarded eligible term has pointwise integrated bound
$C_{B,m}R^{-26\sigma m}$. The scalar-failure estimate is stronger.

We now bound the total discarded contribution. There are at most $R^8$
cell quadruples and $CR^2$ cap pairs. The latter follows from
$\delta^{-1}\le CR$. On opening the four spatial integrals, the model
denominator is $M_1(a)^{-2}M_2(a)^{-2}\le R^8$, since $0\le m_j\le2$.
The energy prefactor is at most $R^2$. Each raw four-point restriction
has mass at most one because both $\rho_j$ are probabilities.
Thus $CR^{20}$ bounds all these factors.
This bound does not use a profile-mass comparison or a restriction
on $C_{\rm prof}\sigma$. Choose an integer
\begin{equation}\label{graph:ibp-order}
 m>\frac{T+21}{26\sigma}.
\end{equation}
After every cell and cap sum and every normalization, the total
discarded contribution is at most $C_TR^{-T}$. All differentiations
used finitely many cutoff constants through orders determined by $B+m$;
no spatial regularity was assumed. We have proved, before taking
absolute values of retained terms,
\begin{equation}\label{graph:discarded-error}
 \HH^{X,Y}(a,v)
 =\text{sum of the integrated retained terms}+O(C_TR^{-T}).
\end{equation}

\subsection{Both degrees, with whole-cell test witnesses}
For a fixed cap pair, regard the retained terms as a graph with row and
column labels $(P,Q)$ and $(P',Q')$. We show that each degree is at most
\begin{equation}\label{graph:degree}
 D_{\rm gr}=C R^{\min_jk_j(a,p)+(C_{\rm test}+64)\sigma}.
\end{equation}
Fix a row and first count its unknown class-$j$ column cell. Its center
lies within $\tau$ of the line through its fixed mate in direction
$w_j^0$. Group these eligible candidates by their depth-$g_j$
ancestors. They lie in one depth-$a$ parent, so there are at most
\[
 2^{2(g_j-a)}\le R^{4\sigma}
\]
groups. In one nonempty group choose an eligible candidate cell $P_*$
and one witness $(z_*,w_*)$ supplied by \eqref{graph:eligibility}.
The witness imposes the parallel test based at $c_*=c_{P_*}$ for
$[g_j,p]$, at level $i_j+1$. Crucially, this is an unknown candidate
in the same group, not the fixed row cell, which could lie outside
that test's ball.

For every other candidate center $c$ in this group,
\begin{align}
 |c-c_*|&\le\sqrt2\,2^{-g_j},\notag\\
 \dist(c,c_*+\R w_*)&\le
 C(2\tau+2^{-g_j}\delta)
 \le C2^{-p}R^{30\sigma}.
 \label{graph:witness-tube}
\end{align}
Indeed both candidates obey the perpendicular constraint through the
same fixed mate, and rotating $w_j^0$ to $w_*$ changes the transverse
coordinate by at most $C2^{-g_j}\delta$.
The first inequality puts all these centers strictly inside the test
ball of radius $30\,2^{-g_j}$. For common sufficiently large $N$, the
second is less than every available test width
$W_i(p)\ge2^{-p}R^{500\sigma}$. The full occupied-cell test therefore
bounds the candidates in this group by
\[
 R^{k_j(g_j,p)+C_{\rm test}\sigma}
 \le R^{k_j(a,p)+C_{\rm test}\sigma}.
\]
Here $[g_j,p]\subseteq[a,p]$. After summing groups the first unknown
has at most $R^{k_j(a,p)+(C_{\rm test}+4)\sigma}$ possibilities.

Fix that first unknown. The scalar constraint in
\eqref{graph:edge-tests} fixes the remaining unknown's longitudinal
coordinate in its own cap direction to an interval of length $2\tau$.
Its perpendicular constraint fixes the transverse coordinate to an
interval of length $2\tau$. Its center therefore lies in a ball of
radius $C\tau$, containing at most
\[
 C(1+2^p\tau)^2\le CR^{60\sigma}
\]
depth-$p$ grid centers. Starting with either class gives the smaller
of the two bounds, proving \eqref{graph:degree} for rows.
The row and column label sets coincide, and the edge conditions are
invariant under reversing both displacements, so the graph is symmetric
and its column degrees satisfy the same bound. Equivalently the
same witness argument applies with columns fixed.
No division by the angle between $w_1^0$ and $w_2^0$ occurs: the final
two coordinate bounds use the remaining unknown's own orthonormal
direction and tangent. Coincident and opposite caps are therefore
included.

\subsection{Positive summation and exact normalization}
For fixed $r,w_1,w_2$, let
$F_{P,Q}=U_P(r,w_1)V_Q(r,w_2)$. By both degree bounds and
$2|zz'|\le|z|^2+|z'|^2$,
\begin{equation}\label{graph:positive-schur}
 \sum_{(\alpha,\beta)\ \text{retained}}
 |F_\alpha\overline{F_\beta}|
 \le D_{\rm gr}\sum_{P,Q}|U_P(r,w_1)|^2|V_Q(r,w_2)|^2.
\end{equation}
The right side has been enlarged to all child labels and no longer
depends on the particular cap graph. Multiply by the nonnegative cap
weights and sum them: at every angle pair
$\sum_{\nu_1,\nu_2}\eta_{\nu_1}(w_1)\eta_{\nu_2}(w_2)=1$.
Consequently no cap-count factor appears in the main bound.
Integrate against the unchanged nonnegative measure
$\Psi(r/2^v)\,dr/2^v$ and the two circle measures. The resulting
integral for a child pair is exactly
$2^{-2(v-p)}\HH^{P,Q}(p,v)$. Combining
\eqref{graph:discarded-error}, \eqref{graph:exact-splitting}, and
\eqref{graph:positive-schur} proves \eqref{graph:sum-bound}.

An occupied depth-$a$ cell of class $j$ has at most
$R^{I_j(a,p)+2C_{\rm prof}\sigma}$ occupied depth-$p$ descendants.
Hence the number of child pairs is at most
$R^{I_1(a,p)+I_2(a,p)+4C_{\rm prof}\sigma}$.
Replacing the sum by this number times the maximum gives the exact
principal cancellation
\begin{align*}
 &2^{2(p-a)}R^{-2(I_1+I_2)}R^{I_1+I_2}
   =R^{-(I_1+I_2)+2(p-a)/N}\\
 &\hspace{25mm}
   =R^{-\sum_{j=1}^2(A_{j,p}-A_{j,a})}.
\end{align*}
The degree and profile losses total
$(C_{\rm test}+64+4C_{\rm prof})\sigma$, which is bounded by the
stated $C_G\sigma$. This proves \eqref{graph:max-bound}.
Every child transform throughout this argument used only a restriction
of an original symbol to a smaller spatial cell. Cap weights were
summed outside the child definitions. Temporary derivatives were used
only to estimate discarded integrals. No cutoff factor was removed,
including the parallel factors at anchor $p$.

For clarity, the common threshold is chosen after the finite orders:
first fix the structural data, $\sigma$, $B$, and $T$; choose $m$ by
\eqref{graph:ibp-order}; fix the numerical cutoff bounds through the
needed finite orders; then choose $N_0$ so that
\eqref{graph:list-absorption}, the strict error margins in
\eqref{graph:test-perturbation}, and the strict tube margin in
\eqref{graph:witness-tube} hold. The constants in the reciprocal and
product differentiation bounds are fixed at this stage. None enters
the degree exponent. There is no dependence on a particular cell,
symbol, law, or profile within the stated uniform data.
\qed

\begin{corollary}
\label{graph:compatibility}
Taking $\rho_1=\rho_2=\rho$, $m_1=m_2=m$, and $g_1=g_2=g$ in
Proposition~\ref{compat:graph} proves \eqref{in:graph-bound}, with the
fixed prefactor and structural exponent loss stated there.
For every inverse-shell output satisfying the exact family
\eqref{graph:exact-symbol}, the same statement applies with its
fixed output order bound. Any external cutoff in a shared-angle energy
remains external: the inequality $\HH_F\le\HH$ uses
Cauchy--Schwarz and does not change any endpoint symbol or support.
Thus a valid inverse output with the required retained parallel
test index is a valid graph input. Graph restriction does not enlarge
its prescribed order bound, and reached factors may be deleted only after
the independent child energies have been obtained.
\end{corollary}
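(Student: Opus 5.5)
This is a specialization: I will deduce \eqref{in:graph-bound} from the maximum form \eqref{graph:max-bound} of Proposition~\ref{compat:graph} by setting $\rho_1=\rho_2=\rho$, $m_1=m_2=m$ (hence $M_1=M_2=M$, $A_{1,n}=A_{2,n}=A_n$, $I_1=I_2=I$, $k_1=k_2=k$) and $g_1=g_2=g$. Four things need checking in order: the hypotheses transfer, the exponent specializes, the output family is admissible, and the remarks about $\HH_F$ and deletion hold.

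\emph{Hypotheses.} The scale hypotheses \eqref{graph:scale-hypotheses} are word for word those of Proposition~\ref{in:graph}: $p-a\le v-p+2\sigma N$, $L\le v-p+2\sigma N$, $v-p\ge10\sigma N$. The anchor hypotheses \eqref{graph:anchor-hypotheses} are $a\le g<p$ and $g\le a+2\sigma N$. The symbols retained by each endpoint in Proposition~\ref{in:graph} are precisely of the form \eqref{graph:exact-symbol}: spatial multipliers bounded by one, every numbered cutoff appearing once, total derivative order at most the fixed bound $B$, and lists of size at most $N^3$. Each endpoint retains $[g,p]$ by assumption.

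\emph{Exponent.} The exponent of \eqref{graph:max-bound} is
\[
-\sum_j(A_{j,p}-A_{j,a})+\min_jk_j(a,p)+C_G\sigma,
\]
which becomes $-2(A_p-A_a)+k(a,p)+C_G\sigma$, exactly as in \eqref{in:graph-bound}. The constants $C_{T,B}$ and the threshold are those of Proposition~\ref{compat:graph}. The structural constant $C_G=C_{\rm test}+100+4C_{\rm prof}$ does not depend on $B$ or $T$.

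\emph{Inverse outputs.} By Proposition~\ref{compat:inverse}, each output symbol has the original numbered cutoffs at the same level, bounded spatial multipliers, no radial dependence, and combined derivative order at most $B_{\rm inv}$. That is the family \eqref{graph:exact-symbol} with $B=B_{\rm inv}$, or any prescribed order bound dominating it. Cutoff derivatives are supported inside their cutoffs' supports, so a nonzero output symbol still imposes the parallel test at level $i+1$. This is the only property the witness argument in the degree bound uses.

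\emph{Order bound.} Graph restriction only restricts symbols to depth-$p$ cells (the final paragraph of the proof of Proposition~\ref{compat:graph}). Temporary derivatives are used only on discarded terms, so the prescribed order bound is not enlarged.

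\emph{External cutoff.} $\HH_F\le\HH$ holds because $0\le\psi\le1$. Applying Cauchy--Schwarz or bounding the shared-angle factor leaves the endpoint symbols untouched, so the cutoff is never absorbed into them.

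\emph{Deletion.} The parallel factors anchored at $p$ are used in the degree bound \eqref{graph:degree} via the witness $(z_*,w_*)$. They must therefore remain present until the child energies $\HH^{P,Q}(p,v)$ have been formed, and may be deleted only afterwards.

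I expect no genuine obstacle. The only point needing care is confirming that an inverse output, which may be complex through its spatial multipliers, lies in the conjugation-closed family \eqref{graph:exact-symbol}. This holds because the cutoffs are real and all complex coefficients sit outside as scalars, as shown in the inverse proof.
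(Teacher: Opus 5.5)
Your proposal is correct and follows the paper's own argument. You specialize the maximum form \eqref{graph:max-bound} to identical profiles to get the exponent $-2(A_p-A_a)+k(a,p)+C_G\sigma$, and you inherit the support and order-bound assertions from the proofs of Propositions~\ref{compat:inverse} and~\ref{compat:graph}. You also obtain $\HH_F\le\HH$ from pointwise Cauchy--Schwarz with $0\le\psi\le1$, which the paper then integrates against the nonnegative radial weight $\Psi$; your further checks on hypothesis matching, conjugation-closure and delayed deletion only make explicit what the paper leaves implicit.
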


\begin{proof}
The one-profile exponent in \eqref{graph:max-bound} is
$-2(A_p-A_a)+k(a,p)+C_G\sigma$. All output support and order bound
assertions have been proved above. The shared-angle comparison follows
at each radial frequency from
$|\int\psi U\overline V|^2\le(\int|U|^2)(\int|V|^2)$ when
$0\le\psi\le1$, and then from the common nonnegative radial weight.
\end{proof}

The local compatibility of the inverse output with this graph input
uses the exact families of Propositions~\ref{compat:inverse}
and~\ref{compat:graph}. The compatibility of the estimates and the order of the remaining
parameter choices are stated in Section~\ref{compat:joint-section}.

\section{Symbols and the six recursive estimates}
\label{sec:states-transitions}

\subsection{The functions allowed in the oscillatory estimates}
Stationary phase differentiates the angular cutoffs. We therefore allow
products of their normalized derivatives from the start. The factor
$D_\ell^{-\alpha}$ compensates for the angular scale of a cutoff, and
the bound on the sum of the derivative orders ensures that only finitely
many such derivatives occur. The spatial factor remains merely bounded;
no spatial smoothness is required.

Fix the list-size bound $\#\mathcal A\le N^3$, which contains all scheduled
lists for sufficiently large $N$. For such a test index list at a common
level $i$, take
\begin{equation}\label{sym:family}
b_x(w)=f(x)\prod_{\mathfrak t\in\mathcal A}
D_{\ell(\mathfrak t)}^{-\alpha_{\mathfrak t}}
\partial_\theta^{\alpha_{\mathfrak t}}w_{i,\mathfrak t}(x,w),
\quad |f|\le1,\quad \sum\alpha_{\mathfrak t}\le B.
\end{equation}
Complex conjugates are permitted. The derivative order bound $B$ is fixed before
$N$. For the actual planar recursion no extra angular multiplier is required:
the circle inverse operators have constant coefficients, spatial Fourier
factors can be included in $f$, and radial factors remain outside the symbols.
This is a specialization, not a claim about arbitrary symbols in higher dimensions.
For lists of a fixed polynomial size and maximal length $L$,
\begin{equation}\label{sym:derivatives}
\sup_x\|b_x\|_{C^j(S^1)}\le C_{j,B}D_*^j,
\qquad D_*=R^\sigma\max(1,2^LR^{-50\sigma}).
\end{equation}
At order zero the bound has no $R^\sigma$ loss. Indeed, at most $B$ factors
have been differentiated, whereas all other factors have modulus at most one.
At positive order $j$ the product rule gives a fixed polynomial in $N$,
absorbed into $R^{\sigma j}$. Every derivative preserves every cutoff's support.

Fix $K=4096$. Two depth-$a$ cells are separated if
\[
K2^{-a}/8\le|c_X-c_Y|\le4K2^{-a}.
\]
Their pointwise distances are comparable to $2^{-a}$ with fixed constants,
and their cross directions lie in a fixed-size small arc. Fix
$\Psi\in C_c^\infty((1/4,4))$, $\Psi\ge0$, $\Psi\ge1$ on $[1/2,2]$.
For occupied cells, not necessarily separated, define
\[
U_X(r,w)=M(a)^{-1}\int_X e^{irw\cdot x}b_x(w)\,d\rho(x),
\qquad V_Y(r,w)=M(a)^{-1}\int_Y e^{irw\cdot y}c_y(w)\,d\rho(y),
\]
\begin{equation}\label{state:H}
\HH^{X,Y}(a,v)=2^{2(v-a)}\int\Psi(r/2^v)
\left(\int_{S^1}|U_X|^2\,dw\right)
\left(\int_{S^1}|V_Y|^2\,dw\right)\frac{dr}{2^v}.
\end{equation}
For a directional cutoff $0\le\psi\le1$, the corresponding shared-angle
quantity $\HH_F$ replaces the product by
$|\int\psi U_X\overline{V_Y}\,dw|^2$; hence $\HH_F\le\HH$.

\begin{definition}[Positive state]
For separated $X,Y$ and $0\le i\le J_0$, let $\theta_i^{X,Y}(a,t)$ be
$\rho|_X\otimes\rho|_Y$ restricted to pairs whose direction
$e(x,y)=(x-y)/|x-y|$ lies in
$G_{i,\mathfrak t}(x)\cap G_{i,\mathfrak t}(y)$ for every test index
$\mathfrak t\in\calM(a,t)$. With $C_0=300$, set
\begin{equation}\label{state:D}
\D_i^{X,Y}(a,t)=2^{t-a}M(a)^{-4}
\iint\one_{\{||x-y|-|x'-y'||\le C_0 2^{-t}\}}\,
d\theta_i^{X,Y}(x,y)\,d\theta_i^{X,Y}(x',y').
\end{equation}
Its current interval is $[a,t]$ and there is no retained base.
\end{definition}
\begin{definition}[Angular state]
A retained base $\calB=[a_0,t_0]$ and current interval $[a,v]$ satisfy
\begin{equation}\label{state:base}
a_0\le a\le v\le t_0,\qquad t_0\le v+\sigma N.
\end{equation}
Each endpoint symbol has exactly one factor for each member of
$\calM_{>a}(\calB)=\{\mathfrak t\in\calM(\calB):\operatorname{anchor}(\mathfrak t)>a\}$,
all at level $i<J_0$. Cells may coincide. The state is its full data, including
the fixed order bound and numerical symbol bounds, and has value
$\HH_i^{\calB}(a,v)=\HH^{X,Y}(a,v)$. Maxima over cells are finite; suprema over
symbols keep all specified order bounds and numerical bounds fixed.
\end{definition}

\begin{lemma}\label{state:delete}
On a depth-$b$ cell, factors anchored at depth at most $b$ can be deleted from
an independent angular energy at a fixed scalar cost $C_B^4$ for two endpoints.
No angular multiplier or derivative order is transferred to the child.
\end{lemma}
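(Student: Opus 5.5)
The plan is to exploit that a factor anchored at depth at most $b$ is constant in space on the depth-$b$ cell and carries no further angular information the energy needs. Consider an angular state on a depth-$b$ cell $X$ (and $Y$ for the second endpoint), with endpoint symbols of the form \eqref{sym:family}. For a test index $\mathfrak t$ with anchor $\le b$, the cutoff $w_{i,\mathfrak t}(x,w)$ is constant in $x$ on each anchor cell by Lemma~\ref{def:masks}. Since $X$ lies inside a unique anchor cell, the factor $D_\ell^{-\alpha_{\mathfrak t}}\partial_\theta^{\alpha_{\mathfrak t}}w_{i,\mathfrak t}(x,w)=:\omega_{\mathfrak t}(w)$ depends only on $w$. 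Write $b_x(w)=\Omega(w)\tilde b_x(w)$, where $\Omega=\prod\omega_{\mathfrak t}$ collects the reached factors and $\tilde b_x$ is the symbol with those factors deleted (derivative orders on the remaining factors unchanged). Then $U_X(r,w)=\Omega(w)\tilde U_X(r,w)$ exactly, because $\Omega$ leaves the $x$-integral.

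Next I would bound $\Omega$ pointwise. Each undifferentiated cutoff has modulus at most one. For the differentiated factors, Lemma~\ref{def:masks} gives $|\partial_\theta^{\alpha}w_{i,\mathfrak t}|\le C_\alpha D_\ell^{\alpha}$, so the normalized factor is at most $C_\alpha$. Because $\sum\alpha_{\mathfrak t}\le B$, at most $B$ factors are differentiated. Hence $\|\Omega\|_\infty\le C_B:=\max\bigl(1,\prod C_{\alpha_{\mathfrak t}}\bigr)$, a constant depending only on $B$ and finitely many fixed cutoff constants. It is independent of $N$ and of the list size, since undifferentiated factors contribute one.

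Because the energy \eqref{state:H} is a product of the two separate angular integrals $\int|U_X|^2\,dw$ and $\int|V_Y|^2\,dw$, we get $\int|U_X|^2\le C_B^2\int|\tilde U_X|^2$, and similarly for $V_Y$. This gives $\HH^{X,Y}\le C_B^4\,\tilde\HH^{X,Y}$ with the deleted symbols, where the factor $C_B^4$ comes from two endpoints with a square each. The deleted symbol is again of the form \eqref{sym:family}, with the same spatial multiplier $f$, the same level, an order bound still at most $B$, and no new angular multiplier. Since $\Omega$ is simply dropped rather than transferred, nothing is added to the child.

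I expect the main obstacle to be the bookkeeping that makes deletion legitimate. The argument relies on the energy being independent, i.e.\ a product of separate angular integrals and not the shared-angle $\HH_F$. The reason is that the bound uses $|\Omega|^2\le C_B^2$ inside a nonnegative integrand. In $\HH_F$ the cross-term $\int\psi U\overline V$ would also permit this bound after Cauchy--Schwarz, but it would change the state type. The argument also relies on the anchor cell containing the whole depth-$b$ cell, which holds because dyadic cells are nested and the anchor depth is $\le b$. Finally, the remaining symbols must still satisfy \eqref{sym:derivatives}. This holds because removing factors only removes Leibniz terms.
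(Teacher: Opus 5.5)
Your proposal is correct and follows the paper's proof. In both, the reached factors form a function of $w$ alone, bounded by $C_B$ because at most $B$ factors are differentiated; each independent angular integral then loses at most $C_B^2$, so the two endpoints together cost $C_B^4$. Your added justification of spatial constancy (nested dyadic cells with Lemma~\ref{def:masks}) and your observation that the argument needs the independent rather than the shared-angle energy match the paper's points.
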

\begin{proof}
The product of reached factors is a function $F(w)$ alone. Its modulus is at
most $C_B$: undifferentiated cutoffs are bounded by one and the total number of
positive-order factors is at most $B$. Thus $U=F U_{\rm remaining}$ and its
angular square integral is at most $C_B^2$ times the remaining one. Apply this
to both endpoints. The remaining $f$ and derivatives are unchanged. This
argument is not valid inside a shared-angle integral before its absolute square.
\end{proof}

\subsection{Elementary collision bounds}
\begin{lemma}\label{state:bins}
For a finite positive measure $\lambda$ set
$B_\delta(\lambda)=\sum_j\lambda([j\delta,(j+1)\delta))^2$. For fixed $C$,
\begin{align*}
B_\delta(\lambda)&\le(\lambda\otimes\lambda)\{|s-s'|\le\delta\},\\
\sup_u(\lambda\otimes\lambda)\{|s-s'-u|\le C\delta\}
&\le C'B_\delta(\lambda),\\
\iint(1+|s-s'|/\delta)^{-2}d\lambda(s)d\omega(s')
&\le C'\sqrt{B_\delta(\lambda)B_\delta(\omega)},\\
(\lambda\otimes\lambda)\{|s-s'|\le C\delta\}
&\le C'\delta\int_{|r|\le\delta^{-1}}|\widehat\lambda(r)|^2dr.
\end{align*}
The third line assumes $\omega$ finite positive.
\end{lemma}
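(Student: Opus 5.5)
Each of the four inequalities in Lemma~\ref{state:bins} follows from comparing the bin decomposition with the relevant pair sets. I will write $\lambda_j=\lambda([j\delta,(j+1)\delta))$ and similarly $\omega_j$.

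For the first line, note that two points in the same bin satisfy $|s-s'|<\delta$. Hence $\sum_j\lambda_j^2$ is the $\lambda\otimes\lambda$ mass of a subset of $\{|s-s'|\le\delta\}$.

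For the second line, fix $u$. If $s\in$ bin $j$ and $|s-s'-u|\le C\delta$, then $s'$ lies in one of at most $2C+3$ bins $j-k$, where $k$ ranges over a window determined by $u$ alone. The pair mass is therefore at most $\sum_j\sum_{k}\lambda_j\lambda_{j-k}$ over this window of $O(C)$ shifts $k$. For each fixed shift, Cauchy--Schwarz gives $\sum_j\lambda_j\lambda_{j-k}\le\sum_j\lambda_j^2=B_\delta(\lambda)$. This proves the bound with $C'=2C+3$, uniformly in $u$.

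For the third line, bound the kernel $(1+|s-s'|/\delta)^{-2}$ by $C(1+|j-j'|)^{-2}$ when $s$ is in bin $j$ and $s'$ is in bin $j'$. The integral is then at most $C\sum_k(1+|k|)^{-2}\sum_j\lambda_j\omega_{j-k}$. Cauchy--Schwarz in $j$ bounds the inner sum by $\sqrt{B_\delta(\lambda)B_\delta(\omega)}$, and $\sum_k(1+|k|)^{-2}$ is a finite numerical constant.

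For the fourth line, use the squared-sinc majorant already employed in the proof of Proposition~\ref{mass:per-test}. Choose $h\ge\one_{[-C\delta,C\delta]}$ with $\widehat h\ge0$, $\widehat h\le C''\delta$, and $\supp\widehat h\subset[-\delta^{-1},\delta^{-1}]$. For example, take a large multiple of $(\sin(u/(2c\delta))/(u/(2c\delta)))^2$ with $c\ge1$ fixed. Its Fourier transform is a triangle supported in $[-1/(c\delta),1/(c\delta)]$, and $h\ge1$ on $[-C\delta,C\delta]$ once the multiple is chosen depending on $C$ and $c$. Then
\[
(\lambda\otimes\lambda)\{|s-s'|\le C\delta\}\le\iint h(s-s')\,d\lambda\,d\lambda=\frac1{2\pi}\int\widehat h(r)|\widehat\lambda(r)|^2\,dr.
\]
The identity is Fourier inversion with the paper's convention. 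It is justified because $\widehat h$ is continuous and compactly supported and $\lambda$ is finite. The support and size bounds on $\widehat h$ then give the claim.

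The only step needing care is this last one: the majorant must be nonnegative, dominate the indicator, and have nonnegative, compactly supported Fourier transform. The squared sinc meets all three requirements. Everything else is bin bookkeeping and Cauchy--Schwarz.
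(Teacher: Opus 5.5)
Your argument follows the paper's proof. The first three lines use bin masses and Cauchy--Schwarz at each fixed offset, with boundedly many offsets for a shifted window and the summable weights $(1+|k|)^{-2}$ for the kernel. The fourth line uses a squared-sinc majorant whose Fourier transform is nonnegative and compactly supported.

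One claim in the fourth line is wrong as stated, though it is easily repaired. With $c\ge1$ merely fixed, no multiple of $(\sin(u/(2c\delta))/(u/(2c\delta)))^2$ can dominate $\one_{[-C\delta,C\delta]}$ once $C\ge2\pi c$. The function vanishes at $u=2\pi c\delta$, which then lies inside the window. This case actually occurs: the paper applies the lemma with $C=C_0=300$.

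The dilation must depend on $C$, not the multiple. Take $c>C/(2\pi)$, for instance $c=\max(1,C)$, so that the squared sinc is bounded below on $[-C\delta,C\delta]$ and a fixed multiple then dominates the indicator. This is the paper's choice of a small $b$ with $(\sin(bs)/(bs))^2$ bounded below on $[-C,C]$.

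With this choice, $\widehat h$ is a triangle supported in $[-1/(c\delta),1/(c\delta)]\subseteq[-\delta^{-1},\delta^{-1}]$, with height $O(c\delta)=O_C(\delta)$. Your Fourier identity and the conclusion then go through unchanged.
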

\begin{proof}
Write bin masses $b_j,c_j$. For each offset $h$, Cauchy--Schwarz bounds
$\sum_jb_jc_{j+h}$ by $\|b\|_2\|c\|_2$. A fixed-width shifted window has
boundedly many offsets, and $\sum_h(1+|h|)^{-2}<\infty$. This proves the first
three lines. For the last choose a sufficiently small fixed $b>0$ so that
$(\sin(bs)/(bs))^2$ is bounded below on $[-C,C]$ and its Fourier transform is
nonnegative and supported in $[-1,1]$. Fourier inversion after dilation gives
the bound, including its fixed convention-dependent constant.
\end{proof}

\subsection{Proof of the recursive estimates}
\label{proof:closure}
We prove Proposition~\ref{trans:closure} in the next six cases and give
the terminal bounds in Section~\ref{trans:terminal}. Lemma~\ref{budget:termination}
shows that the available smoothing levels suffice for the iteration.

\subsection{Suffix: full conditional probabilities and complete child pairs}
\label{trans:suffix}
Let $p$ be a suffix choice for a positive state, so $2p>a+t$. Retain the
depth-$p$ pairs $P\subset X,Q\subset Y$ with positive old pair mass. Choose
a good witness $(x_0,y_0)$ from each such pair and put
$\omega_0=e(x_0,y_0)$ and $e_0=e(c_P,c_Q)$. The separated geometry gives
\begin{equation}\label{trans:linear}
|x-y|=e_0\cdot(x-y)+O(2^{a-2p})
=e_0\cdot(x-y)+O(2^{-t}),\qquad
|\omega_0-e_0|\le C2^{a-p}.
\end{equation}
There is no missing constant term: the linearization at $c_P-c_Q$ has
$e_0\cdot(c_P-c_Q)=|c_P-c_Q|$.
The witness satisfies the projection test index $[p,t]$ in the \emph{full}
conditional law on $Q$. Changing direction changes
$(y-y')\cdot\omega_0$ by $O(2^{a-2p})=O(2^{-t})$. The width
$W_i(t)\ge2^{-t}R^{500\sigma}$ absorbs this fixed error. The projected
conditional self-collision at any fixed multiple of $2^{-t}$ is therefore
at most
\[
B=2^{-(t-p)}R^{A_p-\min_{[p,t]}A+C_{\rm test}\sigma}.
\]
By the shifted-window bound in Lemma~\ref{state:bins}, the same estimate,
up to a fixed factor, holds for every translated window. Fix $x,x'\in P$
in \eqref{trans:linear}, apply that translated bound to $y,y'$, and integrate
$x,x'$. Thus
\[
B_{2^{-t}}\bigl(\dist_\#(\rho|_P\otimes\rho|_Q)\bigr)
\le C B\rho(P)^2\rho(Q)^2.
\]
Group these cell pairs by their center distance in bins of width $2^{-p}$,
and put $W_{\mathcal G}=\sum_{(P,Q)\in\mathcal G}\rho(P)\rho(Q)$.
Let $\lambda_{P,Q}=\dist_\#(\rho|_P\otimes\rho|_Q)$ and
$\lambda_{\mathcal G}=\sum_{(P,Q)\in\mathcal G}\lambda_{P,Q}$.
For $\delta=2^{-t}$, the triangle inequality for fine-bin mass vectors gives
\[
 B_\delta(\lambda_{\mathcal G})^{1/2}
 \le\sum_{(P,Q)\in\mathcal G} B_\delta(\lambda_{P,Q})^{1/2}
 \le C B^{1/2}W_{\mathcal G}.
\]
Each group's actual distances occupy a fixed enlargement of its coarse bin.
Those enlargements have bounded overlap, so
\[
 B_\delta\!\left(\sum_{\mathcal G}\lambda_{\mathcal G}\right)
 \le C\sum_{\mathcal G}B_\delta(\lambda_{\mathcal G})
 \le CB\sum_{\mathcal G}W_{\mathcal G}^2.
\]
The old pair law is dominated by this full positive law. A collision window
of width $C_0\delta$ meets only a fixed number of bin offsets, so
Lemma~\ref{state:bins} bounds the parent's unnormalized fine collision
by the last expression, including cross-group collisions.

Every retained \emph{full} $P\times Q$ lies in the child positive support at
level $i+1$. Indeed a test index of $\calM(a,p)$ is in the parent list, has
anchor at most $p$, and length at most $(p-a)/2$. All points of $P$ or $Q$
have the witness's corresponding anchor. Their directions change by at most
$C2^{a-p}$, and
\[
C2^{a-p}D_\ell\le C2^{-(p-a)}+C2^{-(p-a)/2}R^{-50\sigma}
\longrightarrow0.
\]
Here $p-a>50q_0$. Lemma~\ref{def:masks}'s buffer transfers the witness's
$G_i$ condition to the full pair's $G_{i+1}$ condition, simultaneously for
every child test index. Two distances in one coarse group differ by less
than $(1+4\sqrt2)2^{-p}<C_0 2^{-p}$. Therefore
\[
\sum_{\mathcal G}W_{\mathcal G}^2
\le 2^{-(p-a)}M(a)^4\D_{i+1}^{X,Y}(a,p).
\]
Multiplying by the parent normalization gives exactly
\[
2^{t-a}M(a)^{-4} B\,2^{-(p-a)}M(a)^4
=R^{A_p-\min_{[p,t]}A+C_{\rm test}\sigma}.
\]
Cells and their separation remain unchanged. This proves the suffix row
without either oscillatory input.

\subsection{Low: no fixed profile loss}
\label{trans:low}
Suppose $p$ is a prefix choice. Use the undifferentiated cutoffs of
$\calM(a,t)$ to define the positive measure $\zeta$ in Proposition~\ref{in:shell}.
On every old good pair the cutoffs equal one and the distance weight is bounded
above and below by fixed positive constants. Positive collision domination,
followed by Lemma~\ref{state:bins}, gives
\begin{equation}\label{trans:positive-fourier}
\D_i(a,t)\le C2^{-a}\int_{|r|\le2^t}|\widehat\zeta(r)|^2dr.
\end{equation}
This does not use a pointwise comparison of Fourier transforms of dominated
measures. Frequencies $|r|\le2^a$ give a stopping term at most
$C R^{4C_{\rm prof}\sigma}$, using \eqref{def:profile}, because
$\zeta(\mathbb R)\le C R^{2C_{\rm prof}\sigma}$.
This leaf cost is paid only once along a branch.
The remaining positive shells have integer index
$a<v\le t$; negative shells are equal because $\zeta$ is real.

If $t-v\ge\sigma N$, dominate the shell by a smooth cutoff whose inverse
Fourier kernel has modulus at most
$C2^v(1+2^v|s-s'|)^{-2}$. Let $\theta'$ be the unweighted parent pair law
restricted to all the old cutoff supports. Dropping the bounded positive weight
and cutoff values in this positive kernel bound yields
\[
C2^{v-a}M(a)^{-4}\iint
(1+2^v||x-y|-|x'-y'||)^{-2}\,d\theta'(x,y)d\theta'(x',y').
\]
Nesting and cutoff support imply $\theta'\le\theta_{i+1}^{X,Y}(a,v)$.
The decaying-kernel and collision bounds in Lemma~\ref{state:bins} give
$C\D_{i+1}^{X,Y}(a,v)$. Kernel tails are included in this argument.
The same exact $M(a)$ occurs on both sides. Consequently the full low
factor is a fixed constant, or $\poly$ after shell summation, and contains
\emph{no} $R^{C\sigma}$ mass-comparison cost. The endpoint drops by at least
$\sigma N$ and separation is unchanged.

\subsection{High: from the inverse estimate to the graph estimate}
\label{trans:high}
If $0\le t-v<\sigma N$, nonterminality and the length bound imply
\[
v-a>100q_0-\sigma N>20\sigma N,\qquad
L\le(t-a)/2\le(v-a)/2+\sigma N.
\]
Proposition~\ref{in:shell} applies to this shell in
\eqref{trans:positive-fourier}. The prefix choice supplies $[a,p]$ at both
endpoints and
\[
p-a\le v-p+\sigma N,\qquad L\le v-p+\sigma N,
\qquad v-p>50q_0-\sigma N>10\sigma N.
\]
Thus Proposition~\ref{in:graph} applies with $g=a$, after using $\HH_F\le\HH$.
All generated symbols retain the parallel cutoff support even when it has
been differentiated. Graph children initially retain the entire old list.
Only now apply Lemma~\ref{state:delete}; the remaining list is exactly
$\calM_{>p}([a,t])$ and the base satisfies
$a\le p\le v\le t\le v+\sigma N$.
The child therefore is $\HH_i^{[a,t]}(p,v)$, with fixed order bound
$B_{\rm inv}$ and unchanged index. Its factor is
$\poly R^{-2(A_p-A_a)+k(a,p)+C\sigma}$.
Taking a larger fixed accuracy in the two inputs absorbs the scalar main
factors and yields the prescribed final local error $O(R^{-T})$.

\subsection{Direct: use an origin inside the retained base}
\label{trans:direct}
For an angular state put
\[
L_0=\max(L,\lceil10\sigma N\rceil),\qquad h=v-2L_0.
\]
For a nonterminal current interval, the nonempty list $\calM(a,v)$ is
contained in $\calM_{>a}(\calB)$, so a longest remaining test index exists.
Suppose $h\le a$. At the common large-$N$ threshold,
$2\lceil10\sigma N\rceil<100q_0<v-a$. Hence the floor cannot determine
$L_0$, since $h\le a$ would then imply the opposite inequality
$v-a\le2L_0$. Thus $L=L_0$.
Choose a longest remaining test index, its anchor $p>a$, and an origin
$[b_0,u_0]\subseteq\calB$ for this membership. Then
\[
b_0\le p-L,\qquad p+L\le u_0\le v+\sigma N,\qquad v-a\le2L.
\]
It follows that
\begin{gather*}
L\le v-p+\sigma N,\qquad p-a\le v-p+2\sigma N,\\
v-p\ge(v-a)/2-\sigma N>10\sigma N.
\end{gather*}
For $g=\max(a,p-L)$, we have $a\le g<p$ and
$g\le a+\sigma N$. The same origin supplies $[g,p]$, which remains in
$\calM_{>a}(\calB)$ because its anchor is $p>a$.
These are all the graph hypotheses. After the graph estimate and reached-factor
deletion, the list is precisely $\calM_{>p}(\calB)$. Index, base, spatial
multipliers and remaining derivative orders are unchanged. The child has
the direct-row factor and obeys \eqref{state:base}.

\subsection{Near: deletion follows a positive upper bound}
\label{trans:near}
Now $h>a$. Split both endpoint transforms into depth-$h$ cells without
deleting any factors:
\[
U_X=\frac{M(h)}{M(a)}\sum_{P\subset X}U_P,
\qquad V_Y=\frac{M(h)}{M(a)}\sum_{Q\subset Y}V_Q.
\]
A four-label term is near if
$|c_P-c_{P'}|\le30\,2^{-h}$ and $|c_Q-c_{Q'}|\le30\,2^{-h}$.
For rows $(P,Q)$ and columns $(P',Q')$, each row and column has boundedly
many such neighbors. At fixed $r,w,w'$ use
\[
2|U_P\overline{U_{P'}}V_Q\overline{V_{Q'}}|
\le |U_PV_Q|^2+|U_{P'}V_{Q'}|^2.
\]
After integration the upper bound is a sum of independent positive energies.
There are at most $R^{2I(a,h)+4C_{\rm prof}\sigma}$ row labels.
The exact scale and profile factor is
\begin{equation}\label{trans:near-factor}
2^{2(h-a)}\left(\frac{M(h)}{M(a)}\right)^4
R^{2I(a,h)+4C_{\rm prof}\sigma}
=R^{-2(A_h-A_a)+4C_{\rm prof}\sigma}.
\end{equation}
Reached cutoffs can now be deleted in these positive child energies. The exact
remaining list is $\calM_{>h}(\calB)$; base and index do not change.
This deletion is \emph{not} performed on the far terms still to be estimated.

\subsection{Far: stationary phase and return to a positive distance count}
\label{trans:far}
Stationary phase relates the angular integrals to the directions of the
internal displacements $x-x'$ and $y-y'$. The remaining radial average
is small unless the two displacement lengths are close. This turns the
far contribution into a comparison of two positive distance measures.
The original angular cutoffs must remain present until this calculation
is complete: their support imposes the required conditions on the new
internal directions.

We first give the stationary-phase calculation. If a smooth circle
amplitude $b$ has $\|b\|_{C^j}\le C_j A^j$, $A\ge1$, set $\Lambda=r|\Delta|$
and $e=\Delta/|\Delta|$. For every fixed integer $J$, stationary phase at
$\pm e$ gives
\begin{equation}\label{trans:stationary}
\int_{S^1}e^{irw\cdot\Delta}b(w)dw
=\Lambda^{-1/2}\sum_{\epsilon=\pm1}e^{i\epsilon\Lambda}
\sum_{j=0}^{J-1}\Lambda^{-j}(L_{j,\epsilon}b)(\epsilon e)
+O\bigl(\Lambda^{-J-1/2}A^{2J+4}\bigr).
\end{equation}
Here $L_{j,\epsilon}$ are constant-coefficient angular differential operators
of order at most $2j$. To justify the uniform remainder, near $e$ put
$\theta-\theta_e=2\arcsin(u/2)$ so that $w\cdot e=1-u^2/2$.
The Gaussian Fourier formula followed by $J$ Taylor terms has remainder at
most $C_J\Lambda^{-J}\int|\xi|^{2J}|\widehat a(\xi)|d\xi$ after taking out
$\Lambda^{-1/2}$. Integrating by parts in the Fourier transform bounds this
by $C_J\Lambda^{-J}A^{2J+4}$. Away from the two stationary points ordinary
angular integration by parts gives the same bound. Rotation invariance makes
the resulting operators constant-coefficient in the angular derivative.
In particular every differentiated product coefficient retains the support
of every original cutoff.

For fixed $J,k$ and $r=2^vs$, the coefficients
$\Lambda^{-j}(L_{j,\epsilon}b)(\epsilon e)$ have uniformly bounded $k$-th
$s$-derivatives when $s\in[1/4,4]$ and $A^2/\Lambda\le1$. This follows
directly from the power $s^{-j}$ and the bound $C_jA^{2j}$ on the coefficient.

In a far term define
\[
\Delta_1=x-x',\quad\Delta_2=y-y',\quad d_j=|\Delta_j|,
\quad d_+=\max(d_1,d_2),\quad d_-=\min(d_1,d_2).
\]
At least one actual displacement is at least $(30-\sqrt2)2^{-h}$.
For that displacement, with $A=D_*$ from \eqref{sym:derivatives},
\[
\Lambda\gtrsim2^{v-h}=2^{2L_0}\gtrsim R^{20\sigma},\qquad
\frac{D_*^2}{\Lambda}\lesssim R^{-18\sigma}+R^{-98\sigma},\quad D_*\le R^2.
\]
Consequently the remainder in \eqref{trans:stationary} can be made an arbitrary
negative power of $R$, by choosing a finite $J$ after the prescribed order bound.
If $d_+>2d_-$, expand only the larger displacement. The remaining radial phase
has derivative at least $2^{v-1}d_+$ in modulus, uniformly in the other angle.
Repeated radial integration by parts gives an arbitrary negative power; it
also covers $d_-=0$. No radial derivative lands on an endpoint symbol.

Otherwise $d_+\le2d_-$ and both angular integrals can be expanded. Two radial
integrations by parts, together with the zero-order bound, dominate each main
kernel by
\begin{equation}\label{trans:far-kernel}
C(2^vd_1)^{-1/2}(2^vd_2)^{-1/2}
(1+2^v|d_1-d_2|)^{-2}.
\end{equation}
Equal signs in the phases have $d_1+d_2\ge|d_1-d_2|$; opposite signs give
$d_1-d_2$ directly. The coefficient's first two radial derivatives are
bounded by the preceding calculation.

Choose $n$ with $d_+\in(2^{-n-1},2^{-n}]$ and set $b=n+12$, so $K=2^{12}$.
Then both distances lie in $(K2^{-b}/4,K2^{-b}]$. Moving to depth-$b$ cell
centers preserves the stated separated geometry. Since
$d_+\le\sqrt2\,2^{-a}$, we have $b>a$; the far lower bound gives
$b\le h+C_K$, with the admissible choice $C_K=7$ as verified below.
Require $20\sigma N>C_K+2$.
Then $b<v$ and $[b,v]\subseteq\calB$. A separated child pair's directions
also lie in a uniformly small arc, as required at its later shell conversion.
The kernel \eqref{trans:far-kernel} is now at most
$C2^{-(v-b)}(1+2^v|d_1-d_2|)^{-2}$.

Every test index in $\calM(b,v)$ is in $\calM(\calB)$ and has anchor
strictly larger than $b>a$. It is thus still in the \emph{undeleted} parent
symbols used in this branch. A nonzero stationary coefficient at
$\pm(x-x')/|x-x'|$ forces both internal endpoints to satisfy its
$G_{i+1}$ tests: differentiation preserves cutoff support, and antipodal
symmetry removes the sign. The same holds for $y,y'$. The main kernel is
therefore bounded by a positive kernel with the two child positive-state
support indicators. Remainders need no support assertion.

The next proposition sums this kernel bound. We keep the dyadic range
of the two lengths until the corresponding pairs of cells have been
chosen. Within that range, each cell has only a bounded number of possible
partners. Keeping this restriction until that point is what prevents an
extra count of independent cells.

\begin{proposition}\label{trans:far-reassembly}
Let $h=v-2L_0>a$, and retain the original, undeleted endpoint symbols.
Write $P_h(z)$ for the unique depth-$h$ cell containing $z$, and set
\[
 \begin{aligned}
 \mathcal F_h=\bigl\{(x,x',y,y')\in X^2\times Y^2:{}&
 |c_{P_h(x)}-c_{P_h(x')}|>30\,2^{-h}\\
 &\text{or } |c_{P_h(y)}-c_{P_h(y')}|>30\,2^{-h}\bigr\}.
 \end{aligned}
\]
Let $\HH_{\rm far}$ be the signed sum of the corresponding four-label
terms in the exact depth-$h$ expansion of the angular parent. Put
\[
 d\mu=d\rho|_X(x)d\rho|_X(x')d\rho|_Y(y)d\rho|_Y(y'),
 \qquad \mathcal K_v(u)=(1+2^v|u|)^{-2},
\]
and, with $d_+,d_-$ as above, define
\[
 \mathcal J=\{b\in\mathbb Z:a+11\le b\le h+7,\ b<v\},
 \qquad
 \Omega_b=\{K2^{-b}/2<d_+\le K2^{-b},\ d_+\le2d_-\}.
\]
For $Z=X,Y$, let $\mathcal E_b(Z)$ be the ordered occupied
depth-$b$ cell pairs $(P,P')$ inside $Z$ satisfying the fixed separated
geometry, and set
\[
 S_b(Z)=\sum_{(P,P')\in\mathcal E_b(Z)}
       \sqrt{\D_{i+1}^{P,P'}(b,v)},\qquad
 D_b(Z)=\max_{(P,P')\in\mathcal E_b(Z)}\D_{i+1}^{P,P'}(b,v).
\]
Empty sums and maxima are zero, and $D_b^{\max}=\max\{D_b(X),D_b(Y)\}$.
For each fixed target accuracy $T$ and prescribed order bound $B$, there are
common constants $C^{\rm ker}_{T,B},C_{T,B}$ and a common threshold such
that
\begin{align}
 |\HH_{\rm far}|
 &\le 5C^{\rm ker}_{T,B}
 \sum_{b\in\mathcal J}2^{2(b-a)}
       \left(\frac{M(b)}{M(a)}\right)^4S_b(X)S_b(Y)
       +C_{T,B}R^{-T},\label{trans:far-strong}\\
 &\le 5C^{\rm ker}_{T,B}G_K^2
 \sum_{b\in\mathcal J}R^{-2(A_b-A_a)+4C_{\rm prof}\sigma}
       \sqrt{D_b(X)D_b(Y)}+C_{T,B}R^{-T}\notag\\
 &\le 5C^{\rm ker}_{T,B}G_K^2
 \sum_{b\in\mathcal J}R^{-2(A_b-A_a)+4C_{\rm prof}\sigma}
       D_b^{\max}+C_{T,B}R^{-T},\label{trans:far-max}
\end{align}
where $G_K=(8K+1)^2$ is fixed. The loss $4C_{\rm prof}\sigma$ is
independent of $T,B$ and all temporary differentiation orders. The fixed
prefactors and the threshold may depend on those data.
Every child has exactly the positive-state list $\calM(b,v)$ and index
$i+1$, with the original restricted endpoint measures and no inherited
symbols or derivatives.
\end{proposition}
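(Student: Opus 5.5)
We sum the pointwise kernel bounds from the preceding far-term analysis over the four-point measure $\mu$, keeping the dyadic localization $\Omega_b$ until cell pairs are chosen.

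\textbf{Splitting the far terms.} Write $\HH_{\rm far}$ as an integral over $\mathcal F_h$ against $d\mu$, times the radial weight $\Psi(r/2^v)\,dr/2^v$ and the prefactor $2^{2(v-a)}M(a)^{-4}$. Split $\mathcal F_h$ into the region $d_+>2d_-$ and the region $d_+\le2d_-$.
\begin{itemize}
\item On the first region, expand only the longer displacement by \eqref{trans:stationary} and integrate by parts radially. By \eqref{trans:stationary} and the subsequent discussion, this gives an arbitrary negative power of $R$ pointwise. The crude counting of the graph proof (masses at most one, $M(a)^{-4}\le R^8$, $2^{2(v-a)}\le R^2$) then makes this piece $C_{T,B}R^{-T}$.
\item On the second region, $d_+$ lies in exactly one range $(K2^{-b}/2,K2^{-b}]$. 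The far lower bound $d_+\ge(30-\sqrt2)2^{-h}$ and $d_+\le\sqrt2\,2^{-a}$ force $b\in\mathcal J$. Check the constants here: $a+11\le b\le h+7$, and $b<v$ from $20\sigma N>C_K+2$. Stationary remainders again go into $C_{T,B}R^{-T}$.
\end{itemize}

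\textbf{Bounding a fixed $b$.} On $\Omega_b$, the main part is bounded by \eqref{trans:far-kernel} as $C^{\rm ker}2^{-(v-b)}\mathcal K_v(d_1-d_2)$. This is multiplied by the support indicators of the child level-$(i+1)$ tests for $\calM(b,v)$, using support retention and antipodal evenness. Next pass to depth-$b$ cells: $x\in P$, $x'\in P'$, $y\in Q$, $y'\in Q'$. Since $d_j\in(K2^{-b}/4,K2^{-b}]$, the pair $(P,P')$ lies in $\mathcal E_b(X)$ and $(Q,Q')$ lies in $\mathcal E_b(Y)$; the margins in the separation definition with $K=2^{12}$ absorb the $\sqrt2\,2^{-b}$ center shifts. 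The integral factors through the distance measures
\[
\lambda_{P,P'}=\dist_\#(\theta_{i+1}^{P,P'})
\]
into a sum over $(P,P',Q,Q')$ of $\iint\mathcal K_v(s-s')\,d\lambda_{P,P'}(s)\,d\lambda_{Q,Q'}(s')$. The third line of Lemma~\ref{state:bins} with $\delta=2^{-v}$ bounds this by $C\sqrt{B_\delta(\lambda_{P,P'})B_\delta(\lambda_{Q,Q'})}$. The first line converts each $B_\delta$ into $2^{-(v-b)}M(b)^4\D_{i+1}^{P,P'}(b,v)$. Collecting the factor
\[
2^{2(v-a)}M(a)^{-4}\cdot 2^{-(v-b)}\cdot2^{-(v-b)}M(b)^4=2^{2(b-a)}(M(b)/M(a))^4
\]
gives the summand of \eqref{trans:far-strong} with $S_b(X)S_b(Y)$. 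The factor $5$ accounts for the $\pm$ sign combinations and boundary overlap of adjacent $b$.

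\textbf{From \eqref{trans:far-strong} to \eqref{trans:far-max}.} Each occupied depth-$b$ cell $P$ has at most $G_K$ partners $P'$ with $|c_P-c_{P'}|\le4K2^{-b}$. The number of occupied depth-$b$ cells in $X$ is at most $R^{I(a,b)+2C_{\rm prof}\sigma}$. Hence
\[
S_b(Z)\le G_KR^{I(a,b)+2C_{\rm prof}\sigma}\sqrt{D_b(Z)}.
\]
The identity \eqref{def:identity} combines $2^{2(b-a)}(M(b)/M(a))^4R^{2I(a,b)}$ into $R^{-2(A_b-A_a)}$. The final inequality is AM--GM.

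\textbf{Main obstacle.} The delicate step is justifying the pointwise domination when the integrand is signed. We must take absolute values only after the stationary expansion, while the angular integrals are still undeleted. The support indicators must then be attached to the positive kernel without losing the dyadic restriction to $\Omega_b$, since dropping it early would cost an extra count of independent cells.
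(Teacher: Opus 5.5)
Your proposal is correct and follows essentially the same route as the paper. You bound the far terms pointwise by the positive kernel times the child support indicators on $\mathcal F_h\cap\Omega_b$, enlarge to separated depth-$b$ cell pairs while $\Omega_b$ is still in force, drop $\Omega_b$ only inside each fixed pair of cell pairs, and then finish with the bin-offset Cauchy--Schwarz, the positive-state normalization, and the descendant count for \eqref{trans:far-max}. The one slip is cosmetic: in the paper the factor $5$ is $\sum_j w_j\le5$ (with $w_0=1$ and $w_j=|j|^{-2}$), coming from the explicit form of the third line of Lemma~\ref{state:bins}; it does not come from sign combinations or from overlapping $b$-ranges, which are disjoint as you yourself note, and it can in any case be absorbed into the existential constant $C^{\rm ker}_{T,B}$.
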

\begin{proof}
Let $\chi_b(x,x')$ be the indicator that the nonzero internal direction
satisfies both endpoints' $G_{i+1,\mathfrak t}$ tests for every
$\mathfrak t\in\calM(b,v)$, and set it to zero on the diagonal.
The supported kernel calculation above, including the unequal-length
error, gives
\begin{align}
 |\HH_{\rm far}|\le{}&C^{\rm ker}_{T,B}2^{2(v-a)}M(a)^{-4}
 \sum_{b\in\mathcal J}2^{-(v-b)}\notag\\
 &\quad{}\times\int_{\mathcal F_h\cap\Omega_b}
       \mathcal K_v(d_1-d_2)\chi_b(x,x')\chi_b(y,y')\,d\mu
 +C_{T,B}R^{-T}.\label{trans:far-supported-reassembly}
\end{align}
Indeed, the four fine-cell normalizations cancel exactly:
$ (M(h)/M(a))^4M(h)^{-4}=M(a)^{-4}$.
For a comparable far tuple the unique band choice has
$a+11\le b\le h+7$: use
$d_+\le\sqrt2\,2^{-a}$ and
$d_+>(30-\sqrt2)2^{-h}$.
The stipulated large-$N$ margin ensures $b<v$.

The integrand in \eqref{trans:far-supported-reassembly} is nonnegative.
Discard only $\mathcal F_h$ and sum the disjoint depth-$h$ partition;
this restores the original four-point measure still restricted to
$\Omega_b$. This is a positive enlargement and makes no assertion about
the exact oscillatory kernel outside $\mathcal F_h$.
On $\Omega_b$, both lengths belong to
$(K2^{-b}/4,K2^{-b}]$. Moving to depth-$b$ centers changes a length by
at most $\sqrt2\,2^{-b}$, so both internal cell pairs are in
$\mathcal E_b(X)$ and $\mathcal E_b(Y)$, respectively.
For each first cell, all possible partners have center distance at most
$4K2^{-b}$, giving at most $G_K$ partners. Thus
\[
 \#\mathcal E_b(Z)\le G_KR^{I(a,b)+2C_{\rm prof}\sigma}.
\]
There is no comparison of the depths $b$ and $h$ in this argument:
the fine labels have already been removed before the new partition.

Only after selecting these separated pairs do we drop $\Omega_b$
inside each fixed pair of cell pairs. For
$\lambda_1=\dist_\#\theta_{i+1}^{P,P'}(b,v)$ and
$\lambda_2=\dist_\#\theta_{i+1}^{Q,Q'}(b,v)$ this gives
\[
 \int_{\Omega_b}\mathcal K_v(d_1-d_2)\chi_b(x,x')\chi_b(y,y')\,d\mu
 \le\sum_{\substack{(P,P')\in\mathcal E_b(X)\\
                     (Q,Q')\in\mathcal E_b(Y)}}
       \iint \mathcal K_v(s-t)\,d\lambda_1(s)d\lambda_2(t).
\]
To make the scalar constant in the bin estimate explicit, use bins of
length $\delta=2^{-v}$. On bins with index difference $j$, the kernel is
at most $w_j$, where $w_0=1$ and $w_j=|j|^{-2}$ for $j\ne0$.
Since $\sum_jw_j\le5$, Cauchy--Schwarz on each offset yields
\begin{align*}
 \iint \mathcal K_v(s-t)\,d\lambda_1(s)d\lambda_2(t)
 &\le5\sqrt{B_\delta(\lambda_1)B_\delta(\lambda_2)}\\
 &\le5M(b)^4 2^{-(v-b)}
       \sqrt{\D_{i+1}^{P,P'}(b,v)\D_{i+1}^{Q,Q'}(b,v)}.
\end{align*}
The last step uses the exact positive-state normalization and $C_0\ge1$.
It includes all kernel tails and requires only finite positive measures.
The square-root sum factors into $S_b(X)S_b(Y)$, while the scale factor is
\[
 2^{2(v-a)}M(a)^{-4}\,2^{-(v-b)}\,
 M(b)^4 2^{-(v-b)}
 =2^{2(b-a)}\left(\frac{M(b)}{M(a)}\right)^4.
\]
This proves \eqref{trans:far-strong}.
Use $S_b(Z)\le\#\mathcal E_b(Z)\sqrt{D_b(Z)}$ and
\begin{equation}\label{trans:far-factor}
 2^{2(b-a)}\left(\frac{M(b)}{M(a)}\right)^4
 R^{2I(a,b)+4C_{\rm prof}\sigma}
 =R^{-2(A_b-A_a)+4C_{\rm prof}\sigma}
\end{equation}
to obtain \eqref{trans:far-max}.

All required child tests were retained in the undeleted parent symbols:
$\calM(b,v)\subset\calM(\calB)$ and all child anchors are greater than
$b>a$. Their actual-direction support at both stationary signs was
proved above. Replacing the coefficients by their bounded magnitude
times $\chi_b(x,x')\chi_b(y,y')$ therefore produces the full unweighted
child measures and leaves no differentiated factors in the child.

Finally, the parent normalization times its full four-point mass is at
most $R^{2+4C_{\rm prof}\sigma}$. Choose pointwise remainder accuracy
$P>T+3+4C_{\rm prof}\sigma$ before the fixed temporary integration orders.
The unique fine labels and unique length band create no further error
multiplicity. Thus the remainder in
\eqref{trans:far-supported-reassembly} is uniform at the asserted accuracy.
There are at most $N+O(1)$ choices of $b$, so the maximum form is the
far row of the local transition proposition. No persistent derivative
order bound is enlarged.
\end{proof}

\subsection{Terminal estimates}
\label{trans:terminal}
For width at most $100q_0$, profile masses and the zero-order symbol bound give
\[
\HH_i^{\calB}(a,v)\le C_B2^{2(v-a)}R^{C\sigma},\qquad
\D_i(a,t)\le2^{t-a}R^{C\sigma}.
\]
Both are at most $\poly R^{C(\kappa+\sigma)}$, as claimed.
This completes the proof of Proposition~\ref{trans:closure}. Suffix and low
were proved without Propositions~\ref{in:shell}--\ref{in:graph}; high uses both,
direct uses the graph input, and near/far use the explicit positive and
stationary calculations above. The latter transitions still require that
their incoming symbols obey \eqref{sym:family}--\eqref{sym:derivatives}.

\begin{remark}
The stationary-phase remainder in \eqref{trans:stationary} suffices
because $D_*\le R^2$ and $D_*^2/\Lambda\lesssim R^{-18\sigma}$.
The support assertion applies to the main term; the small remainder is
estimated without a support condition.

In the far estimate, retain the distance range defining $b$ until the
number of possible partner cells has been bounded. It can then be
discarded inside each compatible cell pair because the bounding kernel
is nonnegative. This order is also valid when $b<h$.
\end{remark}

\section{Summing the losses and choosing the parameters}
\label{sec:iteration}
For $t-a\ge10q_0$ define
\[
\calV([a,t])=\min_{a\le r<s\le t,\ s-r\ge10q_0}(A_r+A_s).
\]
Thus $\calV([a,t])$ is the least sum of two profile values at depths
separated by at least $10q_0$. It is evaluated on the current interval.
The losses in the next lemma arise when a smaller interval excludes a
low value of the profile. We show that the increase of $\calV$ is at
least the corresponding loss in the exponent. These increases telescope
along the nested intervals. Hence the sum of the stepwise losses is
bounded by one difference of potentials.

\begin{lemma}\label{budget:payment}
Every potentially positive-cost child has width greater than $10q_0$ and
\[
\mathrm{cost}\le\calV(J')-\calV(J).
\]
For zero-cost children the inequality holds whenever the child potential is defined.
\end{lemma}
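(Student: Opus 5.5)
I will verify the inequality one transition type at a time, using the table in Proposition~\ref{trans:closure}. Throughout, $J$ is the parent's current interval and $J'$ is the child's current interval, and $J'\subseteq J$ in every row. The structural fact I rely on is monotonicity: $\calV$ is a minimum over pairs $(r,s)$ with $s-r\ge10q_0$ inside the interval, so shrinking the interval can only increase it. Hence, whenever the child potential is defined, $\calV(J')\ge\calV(J)$. This already settles every zero-cost row (Low, Near, Far), so only the Suffix, High and Direct rows need work. For each of these I will first check that the child width exceeds $10q_0$, and then exhibit a specific admissible pair for $\calV(J')$ that beats $\calV(J)$ by at least the cost.

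\textbf{Suffix.} The parent interval is $[a,t]$ and the child is $[a,p]$. Here $p$ minimizes $A$ over representatives of the eligible blocks $[a_*,t_*]$, and by Lemma~\ref{def:lists} we have $A_n\ge A_p$ on $[a_*,t_*]$. The cost is $A_p-\min_{[p,t]}A$. Let $m$ be a minimizer of $A$ over $[p,t]$.
\begin{itemize}
\item Since $p-a\ge q_0$ and $\ell=t-p\ge\dots$, the nonterminal and suffix hypotheses give child width $p-a\ge50q_0$.
\item If $m\le t_*$, then $A_m\ge A_p$ and the cost is nonpositive, so the claim follows from monotonicity.
\item Otherwise $m\in(t_*,t]$, so $m$ lies within two blocks of $t$. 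Take any admissible pair $(r,s)$ for $\calV([a,p])$; then $\calV(J')=A_r+A_s$ for some such pair with $r<s\le p$. Since $t-r\ge t-p\ge 10q_0$, the pair $(r,m)$ is admissible for $\calV(J)$, which gives $\calV(J)\le A_r+A_m$.
\item It remains to show $A_s\ge A_p$. If $s$ lies in $[a_*,p]$ this holds by the minimizing property of $p$. If instead $s<a_*$, so that both $r,s$ lie in the first two blocks, I would take the minimum over the admissible pairs containing $p$ and argue with the pair realizing the child's minimum directly.
\end{itemize}
This is the fiddly case: the edge blocks outside $[a_*,t_*]$ are not controlled by $p$. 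My plan is to use the Lipschitz bound $|A_n-A_{n'}|\le2|n-n'|/N$ to absorb the $O(q_0/N)=O(\kappa)$ discrepancy, but that would only give the inequality up to $C\kappa$. I expect the intended proof to exploit the exact choice rules (block representatives and the prefix/suffix split) to get an exact inequality, and finding that argument is the main obstacle.

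\textbf{High and Direct.} Here the parent is $[a,\cdot]$, the child is $[p,v]$, and the cost is $k(a,p)=A_p-\min_{[a,p]}A$.
\begin{itemize}
\item Let $m\in[a,p]$ attain $\min_{[a,p]}A$. For the minimizing child pair $(r,s)$ I would use the comparison $\calV(J)\le A_m+A_s$, valid whenever $s-m\ge10q_0$. Since the child's width is large relative to $10q_0$ (it is at least $10\sigma N$ in the Direct case and at least $50q_0$ in the High case), this separation is available.
\item In the High case I would then need $A_r\ge A_p$ for every $r\in[p,v]$. This should follow because $p$ minimizes over the eligible region.
\item In the Direct case the analogous inequality should come from the base's origin $[b_0,u_0]$, whose chosen $p$ minimizes $A$ on $[p-L,p+L]\supseteq[p,v]$, up to the edge blocks.
\item Combining the two, $\calV(J')-\calV(J)\ge A_r-A_m\ge A_p-A_m=\mathrm{cost}$. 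The edge-block issue recurs here and I would handle it as in the Suffix case.
\end{itemize}
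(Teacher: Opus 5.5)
Your strategy is the same as the paper's. You take a minimizing admissible pair $(r,s)$ for $\calV(J')$ and replace one of its depths by a minimizer of $A$ over the part of the parent interval that the child discards. However, the step you call the main obstacle is the whole content of the lemma, and you leave it open. The missing observation is that the $10q_0$ separation built into $\calV$ dominates the edge strips of width at most $2q_0$ in Lemma~\ref{def:lists}, so the edge blocks never enter.

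\emph{Suffix child.} You have $s\ge r+10q_0\ge a+10q_0>a+2q_0\ge a_*$ and $s\le p\le t_*$. Hence $A_s\ge A_p$ exactly, and your case $s<a_*$ is empty. Also, $(r,m)$ is admissible because $m\ge p\ge s\ge r+10q_0$, not because $t-r\ge10q_0$.

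\emph{High child.} You do not need $A_r\ge A_p$ on all of $[p,v]$; that can fail near $t$. The first depth of a child pair satisfies $p\le r\le v-10q_0\le t-10q_0<t-2q_0\le t_*$, so it already lies in $[a_*,t_*]$.

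\emph{Direct child.} This case needs an idea your sketch lacks. Write $[b_0^*,u_0^*]$ for the union of the origin's eligible blocks. The inequality $A_n\ge A_p$ is only guaranteed there, and $[b_0^*,u_0^*]\supseteq[b_0+2q_0,u_0-2q_0]$. Moreover, $[p,v]\subseteq[p-L,p+L]$ is not available in general: you only know $v\le a+2L$, and $a$ may be close to $p$. The paper uses positivity of the cost. If $k(a,p)>0$, the minimizer $u$ of $A$ on $[a,p]$ has $A_u<A_p$, so it lies left of the origin's inner interval, giving $a\le u<b_0+2q_0$. Hence $v\le a+2L<b_0+2L+2q_0\le u_0+2q_0$. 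Then $r\le v-10q_0<u_0-8q_0\le u_0^*$, so $A_r\ge A_p$, and $(u,s)$ is admissible for the actual parent $[a,v]$. If $k(a,p)=0$, interval containment suffices. Separately, the width bound you quote for the direct child, $10\sigma N$, is far below $10q_0$ because $\sigma<\kappa/1000$. The needed bound is $v-p\ge(v-a)/2-\sigma N>10q_0$.

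Finally, your Lipschitz fallback is not a harmless weakening. It loses about $4q_0/N\approx4\kappa$ per step. By Lemma~\ref{budget:termination}, a branch can contain on the order of $\kappa^{-1}$ suffix, high and direct steps. The telescoping in Theorem~\ref{budget:bound} would then lose $O(1)$ in the exponent, which cannot be absorbed into $C_*(\kappa+\sigma/\kappa)$ and would swamp the gain $\beta$. The exact inequality is genuinely needed.
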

\begin{proof}
For zero cost use interval containment. For a suffix child $[a,p]$, take any
admissible child pair $(r,s)$. Then $s\ge a+10q_0$ and $s\le p$, so it is in
the parent's inner block interval and $A_s\ge A_p$. Replacing $s$ by a
minimizer on $[p,t]$ preserves separation and decreases the sum by at least
$A_p-\min_{[p,t]}A$. The child width exceeds $50q_0$.

For a high child $[p,v]$, its width is at least
$(t-a)/2-\sigma N>10q_0$. In any child pair $(r,s)$ the first depth is in the
parent's inner interval, since $p\le r\le t-10q_0$. Replace it by a minimizer
on $[a,p]$; the decrease is at least $k(a,p)$.

For a direct child $[p,v]$ use the chosen origin $[b_0,u_0]$ and length $L$.
Its width is at least $(v-a)/2-\sigma N$, which is greater than $10q_0$.
If $k(a,p)=0$, containment suffices.
Otherwise let $u\in[a,p]$ attain $\min_{[a,p]}A<A_p$. It must be left of the
origin's inner interval, so $a\le u<b_0+2q_0$. Therefore
\[
v\le a+2L<b_0+2q_0+2L\le u_0+2q_0.
\]
In a child pair, $p\le r\le v-10q_0<u_0-8q_0$, so $r$ is in that inner
interval and $A_r\ge A_p$. Replace $r$ by $u$. The new pair belongs to the
\emph{actual} parent $[a,v]$ and is still separated, and its sum decreases
by at least $k(a,p)$. Taking a minimizing child pair proves each assertion.
\end{proof}

\begin{lemma}\label{budget:termination}
Starting at index zero, every branch has at most $20/\kappa$ non-low steps
and at most $1/\sigma$ low steps, for sufficiently large $N$.
In particular all used cutoffs and next support levels are strictly below $J_0$.
One may take $J_{\rm rec}=\lceil20/\kappa+1/\sigma\rceil+2$ as a depth bound.
\end{lemma}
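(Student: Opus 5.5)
The plan is to track two quantities along a branch of the recursion: the smoothing level $i$ and the interval width. Each transition of Proposition~\ref{trans:closure} raises $i$ by at most one; only the suffix, low and far steps move from $i$ to $i+1$, while high, direct and near keep it fixed. So a bound on the total number of steps bounds the level. I first count low steps. A low child $\D_{i+1}(a,v)$ arises only when $t-v\ge\sigma N$. The only other transitions that can lengthen an interval are high and direct/near/far, and these start from an angular state whose right endpoint satisfies $t_0\le v+\sigma N$ by \eqref{state:base}. Hence no step ever increases the right endpoint, and each low step lowers it by at least $\sigma N$. Since the endpoint starts at most $N$ and stays nonnegative, there are at most $1/\sigma$ low steps.

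Next I count non-low steps by showing that each one, or each bounded group of them, shrinks the width $t-a$ or $v-a$ by a definite fraction, or by at least order $q_0$. The cases are as follows.
\begin{itemize}
\item Suffix $[a,t]\to[a,p]$ with $2p>a+t$ and $p\le t-q_0$: the width drops by at least $q_0$.
\item High $[a,t]\to(p,v)$ with $p\ge a+q_0$: the left endpoint rises by at least $q_0$.
\item Direct $(a,v)\to(p,v)$: here $v-p\le(v-a)/2+\sigma N$, so the width roughly halves.
\item Near $(a,v)\to(h,v)$ with $h=v-2L_0$: the width becomes $2L_0$, and $L_0\le(v-a)/2+\sigma N$ by the list bound.
\item Far $(a,v)\to(b,v)$ with $b\ge a+11$: the width satisfies $v-b\le v-a-11$, and moreover $b\le h+7$.
\end{itemize}

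A plain per-step decrement of $q_0\approx\kappa N$ would give about $N/q_0\approx1/\kappa$ steps, which is within the budget. The gap is that direct and far moves are not guaranteed to drop by $q_0$. I handle them as follows. For direct and near, $v-a$ becomes at most $(v-a)/2+2\sigma N$, so the width halves until it falls below about $4\sigma N<100q_0$ and the state is terminal. For far, note that $b\ge a+11$ alone is weak, but the child starts from $b\ge h-C$ with $h=v-2L_0$, so its width is at most about $2L_0+C$. Once $L_0$ is bounded by the half-width this is again a halving. Halvings then contribute only $O(\log(1/\sigma))$ steps, and the $q_0$-decrement steps contribute at most $N/q_0\le1/\kappa$. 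Together these give at most $20/\kappa$ non-low steps, using $\log(1/\sigma)\ll1/\kappa$ after enlarging constants as needed.

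The main obstacle is the far step when the maximal list length $L$ is close to $(v-a)/2$. In that case the new width $v-b\approx2L$ is not much smaller than $v-a$. I would handle it through the list geometry of Lemma~\ref{def:lists}: the maximal length of $\calM(b,v)$ is at most $(v-b)/2$, so after one far step the next $L$ is at most half the new width. Any failure of a sharp decrease then comes only from the $O(\sigma N)$ slack, and this slack is absorbed once the width exceeds $100q_0\gg\sigma N$. With the total step count bounded by $J_{\rm rec}=\lceil 20/\kappa+1/\sigma\rceil+2$, which is below $J_0=\lceil100(\kappa^{-1}+\sigma^{-1})+100\rceil$, every level used satisfies $i+1<J_0$.
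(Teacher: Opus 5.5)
There is a genuine gap in your count of non-low steps. Your low-step count and your suffix-plus-high count are fine: the end never increases, the start never decreases, and each of these steps lowers the end, or shrinks the width, by at least $\sigma N$ or $q_0$. The treatment of direct, near and far steps fails, because the inequalities you use point the wrong way.

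\begin{itemize}
\item \emph{Direct.} The available relations are $p+L\le u_0\le v+\sigma N$ and $v-a\le 2L$. These bound the anchor $p$ from above and give $v-p\ge (v-a)/2-\sigma N$, a \emph{lower} bound on the child width. Only $p>a$ is known, so the width need not halve.
\item \emph{Near.} Here $h>a$ means $2L_0<v-a$. Your own bound $L_0\le (v-a)/2+\sigma N$ gives $2L_0\le v-a+2\sigma N$, not $(v-a)/2+2\sigma N$. The child width can be $v-a-O(1)$.
\item \emph{Far.} The band satisfies $a+11\le b\le h+7$. So $b\ge h-C$ is false, and the child width $v-b$ can be as large as $v-a-11$.
\end{itemize}

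So no halving or $q_0$-decrement is available for these three types, and the bound $20/\kappa$ is not established. Even granting halving, it would stop at the terminal width $100q_0$. Your $O(\log(1/\sigma))$ count and the claim $\log(1/\sigma)\ll1/\kappa$ are neither needed nor true for general $\sigma<\kappa/1000$.

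The missing ideas are combinatorial, not metric:
\begin{itemize}
\item \emph{High and direct.} Each moves the start to an anchor strictly beyond the current start. All anchors lie among the at most $K_0\le\kappa^{-1}+2$ fixed block representatives, and starts never decrease. Hence high and direct together occur at most $K_0$ times.
\item \emph{Far.} A far step returns to a positive state $\D_{i+1}(b,v)$ and so ends an angular episode. An angular episode begins only at the root or after a high step. Hence there are at most $K_0+1$ far steps.
\item \emph{Near.} For a nonterminal child the floor in $L_0$ is inactive. If deletion lowers the maximal remaining length, charge the step to a longest anchor just passed; these charges are distinct because later anchors lie beyond the current start. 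If the maximum stays $L$, then $v-2L$ equals the new start, so the next step is direct; charge the step to it. Allowing one terminal near step, there are at most $2K_0+1$ near steps.
\end{itemize}

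With at most $\kappa^{-1}$ suffix steps, the non-low total is at most $5K_0+2<20/\kappa$. Only suffix, low and far raise the index, so the next support level is at most $20/\kappa+1/\sigma+1<J_0$.
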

\begin{proof}
The start never decreases and the end never increases. Let $K_0\le\kappa^{-1}+2$
be the number of representatives. High and direct together number at most
$K_0$, since each moves the start to a previously unpassed representative.
Suffix steps lower the end by at least $q_0$, and low steps by at least
$\sigma N$, giving counts $\kappa^{-1}$ and $\sigma^{-1}$.
An angular episode starts at the root or after a high step and ends at a far
step or a terminal state. There are at most $K_0+1$ far steps.

For a near step with nonterminal child, its width $2L_0>100q_0$ makes the
floor inactive. If deletion lowers the maximal remaining length, charge the
step to a formerly longest anchor just passed. Such charges are distinct,
even after a later reset, because future anchors are strictly deeper than
the current start. If the maximum stays equal to $L$, the next split has
$v-2L$ equal to the new start, so the next step is direct; charge it to that
direct step. There is at most one terminal near step. Thus near steps number
at most $2K_0+1$, and all non-low steps at most $5K_0+2<20/\kappa$.
Only suffix, low and far raise the index. Starting at zero, even the next
support level is at most $20/\kappa+1/\sigma+1<J_0$.
\end{proof}
\begin{remark}
The condition $i<J_0$ alone does not guarantee indefinite iteration: a
positive child at $i+1=J_0$ is defined but has no next cutoff. An arbitrary
initial index must leave enough smoothing levels, for example
$i_{\rm in}+J_{\rm rec}+1<J_0$. The initial-index-zero result above is the
one used in the proof of the Fourier estimate.
\end{remark}

\subsection{Proof of the single-profile bound}
\label{proof:recursion}

\begin{proof}[Proof of Theorem~\ref{budget:bound}]
Along a terminal branch all start changes telescope to $2A_c-2A_{a_f}$.
By Lemma~\ref{budget:payment}, the summed costs are at most
$\calV(J_*)-\calV([c,N])$, where $J_*=[u,w]$ is the last interval with
defined potential. A transition to width below $10q_0$ has zero cost and is
already terminal. A low-frequency stop uses its current interval as $J_*$.
Nesting gives $a_f\in[u,w]$. Choose
\[
r=\max(u,\min(a_f,w-10q_0)),\qquad s=r+10q_0.
\]
Both depths are within $10q_0$ of $a_f$, so
$\calV(J_*)\le2A_{a_f}+40q_0/N=2A_{a_f}+O(\kappa)+O(N^{-1})$.
The start factors and costs therefore give the desired principal exponent.
Terminal values add $O(\kappa+\sigma)$. Only the $O(1/\kappa)$ non-low
steps incur $C\sigma$ losses, so their total is $O(\sigma/\kappa)$.
The $O(1/\sigma)$ low steps incur only fixed polynomial factors.
Fixed branch depth makes all those factors a single $C(1+N)^P$.

For additive errors, bound the absolute principal exponent of any one step
by a structural constant $C_1$; $|A_n|\le1$ makes this possible. Choose local
accuracy $T>C_1J_{\rm rec}+30$, with fixed additional margins for intermediate
operations. Propagation amplifies errors by at most $R^{C_1J_{\rm rec}}$
times a fixed polynomial. For sufficiently large $N$ the aggregate error
is $O(R^{-10})$ and is smaller than the displayed bound (its principal
profile part is at least $-4$).

The inverse conversion starts from undifferentiated cutoffs every time.
Choose its orders at this fixed accuracy and set
$B_* =\max(B_{\rm initial},B_{\rm inv})$.
Graph steps do not differentiate; deletion only removes factors. Far
stationary differentiation is temporary because its child is positive.
Thus choose all graph and far integration orders \emph{after} $B_*$.
They do not return to enlarge it. Their constants enter $C,P,N_0$, not $C_*$.
This proves the bound with the order of choices stated below.
\end{proof}

\begin{remark}\label{budget:explicit-loss}
The constant in Theorem~\ref{budget:bound} can be made concrete without
putting derivative-dependent fixed factors into its exponent.  This is an
admissible choice, not an optimization.  Write $K_0\le\kappa^{-1}+2$ for
the number of fixed representatives.  The transition proofs give the
following coefficients of $\sigma$: $C_{\rm test}$ for suffix,
$C_G$ for high and direct, $4C_{\rm prof}$ for near and far, and
\emph{zero} for low.  The last coefficient follows from the exact common
$M(a)^{-4}$ normalization in the low estimate.  The near and far
coefficients follow from two freely counted descendant labels, each
costing $2C_{\rm prof}\sigma$.

Along one branch there are at most $\kappa^{-1}$ suffix steps,
$K_0$ high and direct steps together, $2K_0+1$ near steps,
$K_0+1$ far steps, and $\sigma^{-1}$ low steps.  Consequently the sum
of the nonterminal profile losses is at most
\begin{align*}
 &\sigma\left(
 \frac{C_{\rm test}}{\kappa}+C_GK_0
       +4C_{\rm prof}(3K_0+2)\right)\\
 &\hspace{12mm}\le
 (C_{\rm test}+3C_G+44C_{\rm prof})\frac{\sigma}{\kappa},
\end{align*}
where only $\kappa<1$ was used in the second inequality.  A terminal
state or a low-frequency stopping term contributes at most one additional
$4C_{\rm prof}\sigma$.  Indeed,
\[
 \HH(a,v)\le C_{B_*}2^{2(v-a)}R^{4C_{\rm prof}\sigma},
 \qquad
 \mathsf D(a,t)\le 2^{t-a}R^{4C_{\rm prof}\sigma},
\]
and the low-frequency bound follows from
$\zeta(\mathbb R)\le C R^{2C_{\rm prof}\sigma}$.
Thus all profile losses on a branch total at most
\[
 (C_{\rm test}+3C_G+48C_{\rm prof})\frac{\sigma}{\kappa}.
\]

The potential clamp in the proof costs $40q_0/N$.  The angular terminal
width costs at most $200q_0/N$, with a smaller width cost for a positive
terminal state and no width cost for a low-frequency stop.  Since
$q_0\le\kappa N+1$, these costs together are at most
$240\kappa+240/N$.  The factor $R^{240/N}=2^{240}$ is fixed and is
included in the prefactor.  In particular one may take the
structural choice
\[
 \boxed{C_* =240+C_{\rm test}+3C_G+48C_{\rm prof}.}
\]
For example, $C_{\rm prof}=10$, $C_{\rm test}=1000$, and
$C_G=C_{\rm test}+100+4C_{\rm prof}=1140$ give $C_*=5140$.

All fixed factors from differentiated cutoffs, graph or inverse prefactors,
and reached-factor deletion remain in $C(1+N)^P$.  Their product remains
of that form because the branch depth is bounded before $N$ is chosen.
The polynomial number of frequency and integer-depth choices is treated
in the same prefactor.  Additive oscillatory errors are accounted for
separately by the local-accuracy and propagation argument in the proof;
they are not included in the displayed loss sum.  Thus this explicit
$C_*$ is independent of all subsequently chosen differentiation orders.
\end{remark}

\subsection{Dependence of constants and starting scales}
\label{budget:ledger}
The order is
\[
(C_{\rm prof},C_{\rm test},K)\ \longrightarrow\ (\kappa,\sigma)
\ \longrightarrow\ J_{\rm rec}\ \longrightarrow\ T
\ \longrightarrow\ B_*
\ \longrightarrow\ \text{temporary orders}\ \longrightarrow\ N_0.
\]
In the main theorem the preparation data and the root gain
$\beta$ come before $\kappa,\sigma$. Structural exponent-loss constants must
be known before choosing those small parameters. The graph input's uniformity
in the derivative orders is precisely what prevents a circular choice.

\begin{center}\small
\begin{tabular}{@{}p{.24\textwidth}p{.37\textwidth}p{.30\textwidth}@{}}\toprule
Quantity & Permitted dependence & Forbidden dependence \\ \midrule
$C_*$ in exponent losses & Structural profile/test constants and fixed geometry & High derivative orders; actual measure; $N$ \\
$J_{\rm rec},J_0$ & $\kappa,\sigma$ & $N$; actual branch \\
$T,B_*$ & Earlier parameters, requested accuracy, initial finite order bound & $N$; later far differentiation orders \\
Temporary orders & $T,B_*,\kappa,\sigma$ and common numerical cutoff bounds & Actual cell or symbol; $N$ \\
$C,P,N_0$ & All previously fixed numerical data & Particular measure or profile within the specified class \\
\bottomrule\end{tabular}
\end{center}
The common $N_0$ is the maximum of finitely many thresholds: cutoff buffering
($R^{450\sigma}$ dominating a fixed multiple of $J_0$), polynomial product-rule
absorption, integer-depth margins, $20\sigma N>C_K+2$, stationary and graph
remainder estimates at their fixed orders, and absorption of fixed constants.
These parameter choices specify dependence, not a numerical asymptotic for $N_0$ as
$\sigma\downarrow0$. No such effective asymptotic is claimed here.

The next proposition records the compatibility of the two oscillatory
estimates with the recursive argument. Its local assertions follow from
the inverse and graph estimates. The assertions about iteration use
Appendices~\ref{sec:states-transitions} and~\ref{sec:iteration}.

\subsection{Compatibility of the estimates}
\label{compat:joint-section}

\begin{proposition}
\label{compat:joint}
Fix the structural profile and test constants, geometric cutoffs, the single
cutoff kernel, $\kappa,\sigma$, an initial finite symbol order bound, and an accuracy
target $T>0$. For the generalized inverse version also fix its mass-ratio
data $C_m,\mu$ and endpoint support geometry. First choose a finite persistent
order bound $B_*$. After fixing the finitely many additional differentiation orders
used in the graph and far estimates, there are common numerical seminorm
bounds, fixed prefactors, and a common threshold $N_0$ such that for all
$N\ge N_0$ and every admissible measure, profile, cell and list, the following
hold. These additional orders do not enlarge $B_*$.
\begin{enumerate}
\item Under the inverse-shell scale and separation hypotheses,
Proposition~\ref{compat:inverse} returns only symbols in
\eqref{sym:family}, with the original numbered cutoffs and smoothing levels.
Its scalar coefficient sum is bounded by a constant independent of $N$.
\item If these symbols satisfy the retained parallel-test and scale
hypotheses of Proposition~\ref{compat:graph}, that proposition applies to
them with the same measures and model masses. It leaves all symbol factors
and prescribed derivative orders unchanged on each child cell. Its principal
exponent coefficient may be fixed as
$C_G=C_{\rm test}+100+4C_{\rm prof}$ before any high differentiation order
is chosen.
\item Both claims are uniform for their two-profile versions. Their
one-profile specializations are exactly Propositions~\ref{in:shell} and
\ref{in:graph}. The six transitions and the bound in
Theorem~\ref{budget:bound} therefore require no remaining unproved
inverse-shell or graph estimate.
\end{enumerate}
The hypotheses here concern a finite-scale profile and its cutoffs.
The passage from normalized Frostman measures to these data is supplied
by Theorem~\ref{prepU:normalized-pair} and Lemma~\ref{mass:classes}.
\end{proposition}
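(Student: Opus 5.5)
This is a compatibility statement, so the proof will largely assemble results already proved. The plan is to fix the parameters in the order of the ledger in Section~\ref{budget:ledger} and then check items (1)--(3) in turn against the exact symbol family \eqref{sym:family}.

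\emph{Fixing the parameters.} With the structural data, $\kappa,\sigma$, the initial order bound and $T$ fixed, Proposition~\ref{compat:inverse} determines $P,J$ and $B_{\rm inv}=2(J-1)$. We set $B_*=\max(B_{\rm initial},B_{\rm inv})$.

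Only after $B_*$ is fixed do we choose the graph integration order $m$ from \eqref{graph:ibp-order}. We also choose the far stationary-phase order, with remainder accuracy $P>T+3+4C_{\rm prof}\sigma$ as in Proposition~\ref{trans:far-reassembly}. These temporary orders only decide which numerical cutoff seminorms are needed, through order $B_*+m$ or $2J+4$. By Lemma~\ref{def:masks} those seminorms are finite and uniform, so the common seminorm bounds are fixed at this stage. $N_0$ is then the maximum of the finitely many thresholds listed in the ledger.

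\emph{Item (1).} This is read off the last two paragraphs of the proof of Proposition~\ref{compat:inverse}. The Leibniz expansion keeps every numbered cutoff at its original level. The spatial series \eqref{inv:spatial} enters only through the bounded multiplier $f$. The complex coefficients remain scalars outside the angular integrals. The coefficient mass is bounded by $C'_j(2^{-q}A^2)^j\le C'_j$, with no polynomial factor in $N$.

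\emph{Item (2).} The symbols produced in item (1) have exactly the form \eqref{graph:exact-symbol}, with order bound $B_*$. Whenever the parallel test index and the scale hypotheses \eqref{graph:scale-hypotheses}--\eqref{graph:anchor-hypotheses} hold, Proposition~\ref{compat:graph} applies with the same $\rho_j$ and $M_j$. The proof of that proposition only restricts symbols to child cells and never changes derivative orders. The point needing care is that $C_G$ arises only from three sources: the degree count $(C_{\rm test}+64)\sigma$, the child-pair count $4C_{\rm prof}\sigma$, and the grouping cost $R^{4\sigma}$. None of these involves $m$, so $C_G$ is fixed before any differentiation order is chosen.

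\emph{Item (3).} Corollary~\ref{graph:compatibility} and the undifferentiated case $f=g=1$ of Proposition~\ref{compat:inverse} give the one-profile Propositions~\ref{in:shell} and~\ref{in:graph}. Uniformity in the two-profile setting follows because all constants depend only on the common bounds $C_{\rm prof}$, $C_m$, $\mu$ and the geometry.

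\emph{Main obstacle.} The delicate point is the absence of circularity. The far and graph steps differentiate the cutoffs to higher orders, and we must confirm this never enlarges the persistent order $B_*$ carried by the states. I would argue this from Lemma~\ref{state:delete} together with the structure of the children. Graph children inherit symbols unchanged. Far children are positive states, which carry no symbols at all. Each high step restarts the inverse conversion from undifferentiated cutoffs, so every persistent order stays at most $B_{\rm inv}\le B_*$. Together with Lemma~\ref{budget:termination}, which bounds the branch length, the iteration is consistent and no unproved estimate remains.
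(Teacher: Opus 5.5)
Your proposal is correct and follows essentially the same route as the paper. Like the paper, you fix $B_*=\max(B_{\rm initial},B_{\rm inv})$ before choosing the graph order \eqref{graph:ibp-order} and the far expansion order. You then argue that no later step enlarges $B_*$: graph children only restrict unchanged symbols, Lemma~\ref{state:delete} deletes factors only after independent energies are formed, far children are positive states, and each high step restarts from undifferentiated cutoffs.

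One small slip: the grouping cost $R^{4\sigma}$ is already contained in the degree exponent $(C_{\rm test}+64)\sigma$, so it is not a separate third source of $C_G$. The stated $C_G=C_{\rm test}+100+4C_{\rm prof}$ has room either way, so nothing breaks.
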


\begin{proof}
The common cutoff construction in Lemma~\ref{def:masks} fixes one sequence
of numerical constants $K_j$, independent of every particular measure and
scale. Proposition~\ref{compat:inverse} uses only finitely many of them to
produce a fixed order $B_{\rm inv}$; it also proves the required common
seminorms through any additional fixed order. Set
$B_* =\max\{B_{\rm initial},B_{\rm inv}\}$, taking a maximum over the
finitely many prescribed conversion accuracies when necessary. No later
operation needs to increase this persistent order.

Indeed, the shared-angle cutoff is external, and $\HH_F\le\HH$ changes
neither the symbols nor their supports. The graph estimate restricts the
unchanged symbols to smaller cells. Only after independent child energies
are formed does Lemma~\ref{state:delete} delete reached factors, with its
fixed scalar cost outside the child symbols. The high and direct transitions
already verify all graph scale inequalities. A near transition uses the
same deletion rule. Far differentiation is temporary: it uses the undeleted
parent symbols and returns to a positive state with no prescribed
derivatives. A later high conversion starts again from undifferentiated
cutoffs. Suffix and low remain positive-to-positive. This proves the asserted
prescribed order bound and exact compatibility.

For the quantifiers, fix structural data and small parameters first. The
recursion count is then fixed. Choose the local accuracy with the propagation
margin in Theorem~\ref{budget:bound}, and only then choose the inverse order
and $B_*$. The graph integration order can be chosen by
\eqref{graph:ibp-order}, and the far expansion order by the explicit gain
in Section~\ref{trans:far}. These operations use further finite cutoff derivatives, whose
constants already belong to the one fixed kernel. They do not feed back into
$B_*$ or $C_G$.

Finally choose a single $N_0$ larger than all finitely many thresholds:
the cutoff buffer, scheduled-list count and $2N^3\le R^\sigma$, the separated
geometry bounds for inversion, the graph perturbation and tube margins,
integer-depth margins, and the terminal and propagated-error bounds.
Fixed constants and powers of $N$ may be absorbed only at this last stage.
Every threshold depends on the displayed common data, never on a particular
measure or recursive branch. For the two-profile estimates, the constants may depend on the fixed
data of both measures. This proves the
proposition.
\end{proof}

\begin{remark}
The coefficient $C_G$ depends only on the fixed geometric, profile, and test
constants. The multiplicative constants in the inverse and graph
estimates may depend on the fixed differentiation orders. The starting
scale may depend on all previously chosen parameters, including
$\kappa$ and $\sigma$, but is common to every measure and profile under
consideration. Thus differentiation orders affect the starting scale
and multiplicative constants, but not the exponent loss.
\end{remark}

\section*{Acknowledgments}

G. C. was partially supported by the National Natural Science Foundation of China
(NSFC), grant no. 12371126. X. F. was partially supported by the National Science
and Technology Council, Taiwan, grant no. 114-2115-M-A49-003-MY3.

The authors used an artificial-intelligence tool for language editing,
\LaTeX\ formatting and local mathematical reasoning; all mathematical content was written, checked, and approved
by the authors.

\begingroup\small\raggedright

\endgroup
\end{document}